\pgfplotsset{compat=newest}
\definecolor{myblue}{HTML}{1f77b4}
\definecolor{myorange}{HTML}{ff7f0e}
\newcommand*{\VEC}[1]  {\ensuremath{\boldsymbol{#1}}}
\newcommand*{\MAT}[1]  {\ensuremath{\boldsymbol{#1}}}
\DeclareMathOperator{\tr}{tr}
\DeclareMathOperator{\vvec}{vec}
\DeclareMathOperator{\var}{var}
\theoremstyle{plain}
\newtheorem{theorem}{Theorem}
\newtheorem{proposition}{Proposition}
\newtheorem{lemma}{Lemma}
\theoremstyle{definition}
\newtheorem{definition}{Definition}
\newtheorem{remark}{Remark}
\begin{document}

\title{On Elliptical and Inverse Elliptical Wishart distributions}

\author{Imen Ayadi,
Florent Bouchard,
Frederic Pascal
\thanks{Imen Ayadi, Florent Bouchard, and Frederic Pascal are with Université Paris-Saclay, CNRS, CentraleSupélec, Laboratoire des Signaux et Systèmes 91190, Gif-sur-Yvette, France}
}



\maketitle

\begin{abstract}
This paper deals with the Elliptical Wishart and Inverse Elliptical Wishart distributions, which play a major role when handling covariance matrices.
Similarly to multivariate elliptical distributions, these form a large family of covariance distributions, encompassing, \textit{e.g.}, the Wishart or $t$-Wishart ones.
Our first major contribution is to derive a stochastic representation for Elliptical Wishart and Inverse Elliptical Wishart matrices.
This later enables us to obtain various key statistical properties of Elliptical Wishart and Inverse Elliptical Wishart distributions such as expectations, variances, and Kronecker moments up to any orders.
The stochastic representation also allows us to provide an efficient method to generate random matrices from Elliptical Wishart and Inverse Elliptical Wishart distributions.
Finally, the practical interest of Elliptical Wishart distributions -- in particular the $t$-Wishart one -- is demonstrated through a fitting experiment on real electroencephalographic data.
This showcases their effectiveness in accurately modeling real covariance matrices.
\end{abstract}

\begin{IEEEkeywords}
Covariance matrices,
Elliptical Wishart,
Inverse Elliptical Wishart,
Stochastic representation,
Kronecker moments,
Electroencephalography fitting.
\end{IEEEkeywords}

\section{Introduction\label{sec:intro}}

Covariance matrices are a crucial tool for statistical signal processing and machine learning.
In particular, these are central in the information theory and estimation communities.
In terms of applications, covariance matrices have proven tremendous in various fields such as biosignals~\cite{barachant2011multiclass,kalunga2016online}, radar~\cite{pascal2008performance}, image processing~\cite{portilla2003image} or MIMO~\cite{mallik2003pseudo,lopez2015eigenvalue,ferreira2020advances}.
Especially when dealing with direction of arrival~\cite{mahot2013asymptotic}, change detection~\cite{prendes2015change,mian2024online}, source separation~\cite{pham2001blind,bouchard2018riemannian}, principal component analysis~\cite{jolliffe2016principal,collas2021probabilistic}, graph learning~\cite{friedman2008sparse,hippert2023learning}, \textit{etc}.
Statistics over covariance matrices have also been exploited in various contexts.
Wishart and Inverse Wishart distributions have for instance been leveraged as priors in Bayesian learning; see, \textit{e.g.},~\cite{bidon2007bayesian,heaukulani2019scalable}.
When performing classification or clustering with covariance matrices as features, the notorious Fréchet mean have been extensively exploited; see, \textit{e.g.},~\cite{barachant2011multiclass,kalunga2016online}.
It yielded the so-called Riemannian Gaussian distribution~\cite{said2017riemannian}.
Recently, for classification, \cite{ayadi2023t} rather proposed to exploit an extension to the Wishart distribution: the $t$-Wishart distribution, part of the Elliptical Wishart family of distributions.

This paper deals with such distributions over the space of covariance matrices, which is the manifold of symmetric positive definite (SPD) matrices.
More specifically, we consider some extension of the Wishart and Inverse Wishart distributions: the Elliptical Wishart and Inverse Elliptical Wishart distributions~\cite{teng1989generalized}.
Recall that the Wishart distribution basically corresponds to the distribution of the (scaled) sample covariance matrices (SCM) of some random Gaussian vectors.
The Inverse Wishart distribution corresponds to the one of SPD matrices whose inverses follow the Wishart distribution.
Hence, by definition, these two distributions play a central role as soon as random matrices or vectors are involved.
They have thus been extensively studied and most of their properties are very well known -- see, \textit{e.g.},~\cite{bilodeau1999theory,gupta2018matrix}.
In particular, the most important properties -- when it comes to characterizing and exploiting a distribution -- have been derived, such as the expectation, variance or Kronecker moments.
The distribution and properties of eigenvalues of Wishart random matrices have also been studied~\cite{lopez2015eigenvalue,chiani2017probability}.

Due to their significance, several extensions of these distributions have been proposed.
Let us list a few.
In~\cite{anderson1946non}, they considered the so-called non-central Wishart distribution, for which the assumption that data are centered is removed.
To establish a Cochran-type theorem, \cite{wong1995laplace} defined the Laplace-Wishart distribution.
In the context of graphical models, \cite{roverato2002hyper} proposed the so-called hyper-inverse Wishart distribution.
Another very interesting extension is the pseudo or singular Wishart distribution~\cite{mallik2003pseudo,diaz2006distribution}, for which random matrices are rank deficient.
It appears particularly relevant in the context of low sample support.
In this work, we are rather interested in elliptical extensions of the Wishart and Inverse Wishart distributions.
Multivariate elliptical distributions have shown themselves very advantageous in the context of robust statistics.
They indeed allow to better handle noise and outliers thanks to their heavy tails.
Including elliptical models in order to extend Wishart and Inverse Wishart distributions has been achieved two different ways.
In~\cite{sutradhar1989generalization}, they consider the distribution of the SCMs of random vectors drawn from some multivariate elliptical distribution.
In~\cite{teng1989generalized}, they instead look into the distribution resulting from the matrix-variate elliptical distribution.
This latter approach, which we also consider here, yields the so-called Elliptical and Inverse Elliptical Wishart distributions.
As for the multivariate case, these distributions appear particularly interesting in the context of robust statistics.
However, they are robust to noise and outliers at the level of covariance matrices.

Although they were introduced a long time ago, the number of studies considering the Elliptical Wishart and Inverse Elliptical Wishart distributions is limited.
These are thus no yet very well known.
Some of their statistical properties have been obtained: their probability density functions (PDF)~\cite{anderson1958introduction}; the characteristic function of Elliptical Wishart distributions~\cite{bekker2015wishart}; the joint probability density of eigenvalues, moments of the trace and determinant~\cite{bekker2015wishart}; and the expectation of zonal polynomials and cumulative density function (CDF)~\cite{caro2016matrix}.
The information geometry and maximum likelihood estimator for the Elliptical Wishart distributions have also been derived in~\cite{ayadi2023elliptical}, while a classification method was designed in~\cite{ayadi2023t}.
This paper focuses on yet better characterizing the Elliptical Wishart and Inverse Elliptical Wishart distributions through the following major contributions:
\begin{itemize}
    \item Deriving stochastic representations for random matrices drawn from Elliptical and Inverse Elliptical Wishart distributions.
    This is clearly the most important contribution of this paper since other results are obtained from it.
    \item Obtaining the main missing statistical properties for these distributions: expectation, variance and Kronecker moments up to any order.
    Notice that prior to this paper, Kronecker moments of the Wishart distribution have only been derived up to order 4~\cite{sultan1996moments}.
    They are now also known up to any order.
    \item Developing an algorithm to efficiently -- from a computational point of view -- generate Elliptical Wishart and Inverse Elliptical Wishart random matrices.
    \item Performing a fitting experiment with various moments on real electroencephalographic (EEG) data, demonstrating the practical interest of these elliptical extensions -- especially the $t$- one -- as compared to the usual Wishart distribution.
\end{itemize}

The article is organized as follows.
Section~\ref{sec:preliminaries} provides a brief reminder about basic statistical tools and about multivariate elliptical distributions.
Section~\ref{sec:review} gives some background about the Wishart and Inverse Wishart distributions, and their elliptical extensions.
Then, Section~\ref{sec:statistics} derives the paper's main theoretical contributions: the stochastic representation along with the resulting statistical properties of the Elliptical Wishart and Inverse Elliptical Wishart distributions.
Section~\ref{sec:application} contains the algorithm to draw random matrices, along with numerical fitting experiments on real EEG data.
The code to reproduce these fitting experiments is provided in \href{https://github.com/IA3005/fitting_EEG.git}{https://github.com/IA3005/fitting\_EEG.git}.
Finally, concluding remarks and perspectives are drawn in Section~\ref{sec:conclusion}.

\section{Preliminaries}
\label{sec:preliminaries}
This section is dedicated to outlining the key notations used in the paper and presenting some previous findings on the different distributions under consideration before delving into the contributions of this work.

\subsection{Notations}
\label{subsec:notations}
Vectors (respectively matrices) are denoted by bold-faced lowercase letters (respectively uppercase letters).
$\otimes$, $\tr(\cdot)$, $|\cdot|$ and $\|\cdot\|_{2}$ correspond to the Kronecker product, trace, determinant, and $\ell_2$ norm, respectively.
$\MAT A^\top$ denotes the transpose of the matrix $\MAT A$.
$\vvec(\cdot)$ is the operator that transforms a matrix $p\times n$ into a vector of length $pn$, using column-wise concatenation.
Moreover, $\MAT I$ is the identity matrix, and $\MAT 0$ is the matrix of zeros with appropriate dimensions. $\MAT K$ is the commutation matrix, which transforms $\vvec(\MAT A)$ into $\vvec( \MAT{A}^\top )$.
Let $\mathcal{S}_d$ and $\mathcal{S}_d^{++}$ denote the set of $d\times d$ real symmetric matrices and the set of $d\times d$ real symmetric definite positive matrices, respectively.
We define the unit sphere on $\mathbb{R}^d$ as $\mathcal{C}_d:=\{\VEC v \in \mathbb{R}^d|\quad \|\VEC v\|_2=1\}$. Similarly, the unit sphere on 
$\mathbb{R}^{p\times n}$ is $\mathcal{C}_{p,n}:=\{ \MAT{U}\in\mathbb{R}^{p\times n}|\ \|\MAT{U}\|_{2} = 1 \}$.
For the statistical notations, $\sim$ means ``distributed as'' while $\stackrel{d}{=}$ stands for ``shares the same distribution as''.
$E[\cdot]$ denotes the statistical expectation and $\var[\cdot]$ denotes the statistical variance.
$\Gamma(\cdot)$ is the Gamma function and $\Gamma_p(\cdot)$ is the multivariate Gamma function of dimension $p$.
For $\kappa=(k_1,\dots,k_p) \in \mathbb{N}^p$, we denote by $(\cdot)_\kappa$ the partitional shifted factorial defined for $a \in \mathbb{R}$ at $\kappa$ as $(a)_\kappa = \prod_{j=1}^p \left(a-\frac{j-1}{2}\right)_{k_j}$ where $(a)_{k_j}=a(a+1)\dots(a+k_j-1)$, with the convention $(a)_0=1$.

\subsection{Statistical properties reminders}
\label{subsec:recall}


This paper focuses on distributions of random matrices over $\mathbb{R}^{p\times n}$. This section provides basic statistical notions and notations needed to manipulate these matrix-variate distributions. As for any distribution, we are interested in the probability density functions, the characteristic functions, and the moments.

The probability density function (PDF) of a random matrix can be defined
like the ones of a random uni-variate variable or random vector.
If $\MAT X \in \mathbb{R}^{p\times n}$  is a random matrix, the matrix-variate PDF of $\MAT X$, denoted as $f_{\MAT X}$, is a mapping from $\mathbb{R}^{p\times n}$ to $\mathbb{R}_+$ with some parameters describing some characteristics of that distribution.
To be a proper PDF, $f_{\MAT X}$ must verify
\begin{equation}
    \int_{\mathbb{R}^{p\times n} }f_{\MAT X}(\MAT X)d\MAT X = 1,
\end{equation}
where $d\MAT X = \prod_{k,\ell}d\MAT X_{k\ell}$ denotes the Lebesgue measure on $\mathbb{R}^{p\times n}$.

To study a distribution, we systematically derive its moments as they are paramount in characterizing the mean, dispersion around the mean, skewness, and tail's heaviness.
The definition of moments in multivariate statistics was easily extended to the matrix variate case.
In particular, the first moment of a random matrix $\MAT X$ is its expectation $E[\MAT X]$, which is the $p\times n$ matrix whose elements are
\begin{equation}
    E[\MAT X]_{k\ell} = E[\MAT X_{k\ell}] = \int_{\mathbb{R}^{p\times n}} \MAT X_{k\ell} f_{\MAT X}(\MAT X)d\MAT X.
\end{equation}
The second moment is $E[\vvec(\MAT X)\vvec(\MAT X)^\top]$. Hence, the variance of the random matrix $\MAT X$, denoted $\var[\MAT X]$, is given~by
\begin{equation}
    \var[\MAT X] = E[\vvec(\MAT X)\vvec(\MAT X)^\top]-\vvec(E[\MAT X])\vvec(E[\MAT X])^\top.
\end{equation} 
More generally, it is possible to define the $k^{\textup{th}}$ moment (if it exists) for random matrices to improve our understanding of the distribution shape.
Its definition can vary from one author to another, but it is more likely given in literature by $E[\MAT X_{j_0j_1}\dots \MAT X_{j_{k-1}j_k}]$ \cite{kollo2005advanced}, the expectation of the product of $k$ elements of $\MAT X$, which is somehow equivalent to the expectation of a Kronecker product of order $k$.
For a random matrix $\MAT S \in \mathcal{S}_p$, the $k^{\textup{th}}$ Kronecker moment corresponds to the expectation of $k^{\textup{th}}$ Kronecker product, \textit{i.e.}, $E[\otimes^k\MAT S] = E[\MAT S\otimes \dots \otimes \MAT S] \in \mathbb{R}^{p^k\times p^k}$.
Thanks to these moments, it is possible to compute other types of moments using some vectorization and Kronecker-product tricks.
Examples of such moments are $E[\MAT S^k]$ and $E[\tr(\MAT S)^q\MAT S^r]$ where $q,r \in \mathbb{N}$ \cite{von1988moments}.
One of the available tools to compute Kronecker moments is the differentiation of the characteristic function, which is defined for a random matrix $\MAT X$ as
the Fourier transform of $f_{\MAT X}$.
The characteristic function $\Psi_{\MAT X}:\mathbb{R}^{p\times n}\to\mathbb{R}$ is defined as 
\begin{equation}
    \Psi_{\MAT X}(\MAT Z)= E[\exp(i \tr(\MAT X^\top\MAT Z))] = \int_{\mathbb{R}^{p\times n} }\exp(i \tr(\MAT X^\top\MAT Z))f_{\MAT X}(\MAT X)d\MAT X,
\end{equation}
where $i^2=-1$.
If $n=1$, we regain the definition of the characteristic function for the multivariate case.
For symmetric random matrices, Kronecker moments $E[\otimes^k\MAT S]$ are related to the $k^{\textup{th}}$ derivative%
\footnote{
    Let $\MAT X \in \mathbb{R}^{p\times p}$ symmetric and $\MAT Y\in \mathbb{R}^{q\times r}$.
    We choose the Kroneckerian arrangement for defining our matrix derivative $\frac{\partial \MAT Y}{\partial \MAT X} = \sum_{m,j,k,\ell} \alpha_{k\ell}\frac{\partial \MAT Y_{mj}}{\partial \MAT X_{k\ell}}(\VEC s_m\VEC t_j^\top)\otimes (\VEC e_k\VEC e_\ell^\top)$ with $\alpha_{k\ell}=1$ if $k=\ell$, otherwise $1/2$; where $\{\VEC s_m\}_{m=1}^q$, $\{\VEC t_j\}_{j=1}^r$ and $\{\VEC e_k\}_{k=1}^p$ denote the canonical bases of $\mathbb{R}^q$, $\mathbb{R}^r$ and $\mathbb{R}^p$, respectively.
    For more details, see \cite{neudecker2003two,von1988moments}
}
of the characteristic function $\Psi_{\MAT S}$ with respect to $\MAT Z\in \mathcal{S}_p$ at $\MAT 0$ (if it exists): 
\begin{equation}
    \left.{\frac{\partial^k\Psi_{\MAT S}(\MAT Z)}{\partial\MAT Z^k}}\right\rvert_{\MAT Z \to \MAT 0} = i^k E[\otimes^k \MAT S].
\label{eq:char_fun_diff}
\end{equation}
\begin{remark}
    We highlight that the second Kronecker moment, $E[\MAT S\otimes \MAT S]$, is different from the second moment, $E[\vvec(\MAT S)\vvec(\MAT S)^\top]$. However, they can be linked to each other by:
    \begin{equation}
        \vvec(E[\MAT S\otimes \MAT S]) = (\MAT I_p\otimes \MAT K_{p,p}\otimes \MAT I_p) \vvec(E[\vvec(\MAT S)\vvec(\MAT S)^\top]),
    \end{equation}
    using the formula $\vvec(\MAT A \otimes \MAT B) = (\MAT I_{n_1}\otimes \MAT K_{n_2,p_1}\otimes \MAT I_{p_2})\left(\vvec(\MAT A)\otimes\vvec(\MAT B)\right)$ for any $\MAT A \in \mathbb{R}^{p_1\times n_1}$ and $\MAT B\in \mathbb{R}^{p_2\times n_2}$, and $\VEC a \otimes \VEC b = \vvec(\VEC b\VEC a^\top )$ for any vectors $\VEC a$ and $\VEC b$~\cite{neudecker1983some}.
\end{remark}

\subsection{Multivariate and matrix-variate elliptical distributions}
\label{subsec:multivariateEllip}

This subsection provides a short overview of the multivariate and matrix-variate elliptical distributions.
These are leveraged to define and study the Elliptical Wishart and Inverse Wishart distributions in the following sections.
We first define the multivariate elliptical distribution in Definition~\ref{def:elliptical_dist}.
For a full review, the reader is referred to~\cite{ollila2012complex}.
\begin{definition}[Multivariate elliptical distribution]
\label{def:elliptical_dist}
    A random vector $\VEC x \in \mathbb{R}^d$ is said to have a centered elliptical distribution, denoted as $\mathcal{E}_d(\VEC 0,\MAT\Sigma,h_d)$ if its probability density function has the form
    \begin{equation}
        f_{\VEC x}(\VEC x) = |\MAT\Sigma|^{-1/2} h_d(\VEC x^\top\MAT\Sigma^{-1}\VEC x),
    \end{equation}
    where $\MAT\Sigma \in \mathcal{S}_d^{++}$ is the covariance matrix and $h_d:\mathbb{R}_+ \to \mathbb{R}$ is the density generator function of the distribution.
    To ensure that $h_d$ is a proper density generator function, one must have $\frac{\pi^{d/2}}{\Gamma(d/2)}\int_0^{+\infty}h_d(t)t^{\frac{d}{2}-1}dt = 1$.
\end{definition}

The elliptical model can be naturally extended to matrices via the vectorization operator. This family of distributions plays a key role in the definition of Elliptical Wishart distributions. The matrix-variate elliptical distribution is defined in Definition~\ref{def:matrix_elliptical_dist}. For a thorough review, see~\cite{gupta2012elliptically}.

\begin{definition}[Matrix-variate elliptical distributions]
\label{def:matrix_elliptical_dist}
    A random matrix $\MAT X \in \mathbb{R}^{p\times n}$ is said to be drawn from a centered elliptical distribution, denoted as $\mathcal{E}_{p,n}(\MAT 0, \MAT \Sigma, \MAT \Psi,h_{np})$ if $\vvec(\MAT X) \sim \mathcal{E}_{np}(\VEC 0, \MAT \Psi \otimes \MAT \Sigma,h_{np})$ wherein $\MAT \Sigma \in \mathcal{S}_p^{++}$ is the covariance matrix controlling the correlation between $\MAT X$'s rows and $\MAT \Psi \in \mathcal{S}_n^{++}$ is the covariance matrix controlling the correlation between $\MAT X$'s columns. 
\end{definition}

Multivariate and matrix-variate elliptical distributions were well studied in the literature. In particular, many of their statistical properties, such as the expectation and the variance, were derived in both cases; see \emph{e.g.},~\cite{ollila2012complex,gupta2012elliptically}.
In this paper, we need two tools from the multivariate case: the stochastic representation and moments of an elliptical vector's squared $\ell_2$ norm.
They indeed play a major role in Section~\ref{sec:statistics}.
The stochastic representation of $\VEC x \sim \mathcal{E}_d(\VEC 0,\MAT\Sigma,h_d)$ is
\begin{equation}
\label{eq:storepvectors}
    \VEC x \stackrel{d}{=} \sqrt{\mathcal{Q}} \MAT\Sigma^{1/2} \VEC u,
\end{equation}
where $\mathcal{Q}=\|\MAT\Sigma^{-1/2}\VEC x\|_2^2$ is a non-negative random variable and $\VEC u \in \mathbb{R}^d$ is uniformly distributed over the unit sphere $\mathcal{C}_{d}$.
Moreover, $\mathcal{Q}$ and $\VEC u$ are independent.
The variable $\mathcal{Q}$ is called the second-order modular of $\VEC x$ and its  probability density function is $t\mapsto \frac{\pi^{d/2}}{\Gamma(d/2)}h_d(t)t^{\frac{d}{2}-1}$.
Furthermore, when $\VEC x \sim \mathcal{E}_{d}(0,\MAT I_{d},h_{d})$, the expectation $m_k^{(h_d)}$ of $\|\VEC x\|_2^{2k}$ is given by
\begin{equation}
    m_k^{(h_d)} = E[\|\VEC x\|_2^{2k}]=E[\mathcal{Q}^k] = \frac{\pi^{d/2}}{\Gamma(d/2)}\int_0^{+\infty}h_d(t)t^{\frac{d}{2}+k-1}dt ,
    \label{eq:moment_modular}
\end{equation}
for $k \in \mathbb{Z}$.
However, this moment does not necessarily exist for all orders. In particular, the convergence of the integral should be checked at $0$ and $+\infty$.

\section{Background: Wishart, Inverse Wishart, and elliptical extensions}
\label{sec:review}

This section aims to provide some background about the Wishart and Inverse Wishart distributions, along with their elliptical extensions. In particular, their definitions, probability density functions, and some of their existing statistical properties -- mainly the characteristic functions, the first and second moments, and the Kronecker moments. Notice that in this paper, only non-degenerate distributions are considered, \emph{i.e.}, it is always assumed that the number of samples $n$ is greater than the number of features $p$ and center matrices are non-singular.

\subsection{Wishart distribution}
\label{subsec:w review}

First, we introduce the Wishart distribution; see \emph{e.g.},~\cite{bilodeau1999theory,gupta2018matrix}.
Wishart random matrices are characterized in Definition~\ref{def:defW}.

\begin{definition}[Wishart distribution]
    Let $n$ independent and identically distributed (i.i.d.) vectors $\{\VEC x_j\}_{1\leq j\leq n}$ in $\mathbb{R}^p$ drawn from the multivariate Gaussian distribution $\mathcal{N}_p(\VEC 0,\MAT\Sigma)$ and  $\MAT S=\sum_{j=1}^n \VEC x_j\VEC x_j^\top$.
    Then, $\MAT S$ is said to have the Wishart distribution with $n$ degrees of freedom and a center matrix $\MAT\Sigma$, denoted as $\mathcal{W}(n,\MAT\Sigma)$.
\label{def:defW}
\end{definition} 
 
\begin{remark}
   Notice that if we consider $\MAT X \in \mathbb{R}^{p\times n}$ the matrix whose columns are the $\VEC x_j$'s, then $\MAT X$ has the matrix-variate Gaussian distribution $\mathcal{N}_{p,n}(\VEC 0,\MAT\Sigma, \MAT I_n)$ \cite{bilodeau1999theory}. We can write $\MAT S =\MAT X \MAT X^\top$, which yields an alternative way to define the Wishart distribution \cite{gupta2018matrix}. Such a matrix formulation is at the core of elliptical extensions.
\end{remark}

The probability density function of $\MAT S \sim \mathcal{W}(n,\MAT\Sigma)$ is given by
\begin{equation}
    f_{\MAT S}(\MAT S) = \frac{1}{2^{np/2} \Gamma_p(n/2)} |\MAT\Sigma|^{-n/2} |\MAT S|^{\frac{n-p-1}{2}} \exp\left(-\frac{1}{2} \tr(\MAT\Sigma^{-1}\MAT S)\right).
    \label{eq:pdfW}
\end{equation}
Various statistical properties of the Wishart distribution are derived thanks to the independence of the $\VEC x_j$'s and the statistical properties of the multivariate Gaussian distribution.
For instance, from~\cite{bilodeau1999theory}, the characteristic function of $\MAT S \sim \mathcal{W}(n,\MAT\Sigma)$ is, for $\MAT Z \in \mathcal{S}_p$,
\begin{equation}
   \Psi_{\MAT S}(\MAT Z)= |\MAT I_p-2i\MAT Z\MAT\Sigma|^{-\frac{n}{2}}.
   \label{eq:charW}
\end{equation}
The first and second moments of the Wishart distribution are well known~\cite{bilodeau1999theory}. Indeed, its expectation and its variance are 
\begin{equation}
    E[\MAT S]= n \MAT\Sigma,
    \label{eq:expecW}
\end{equation}
and
\begin{equation}
    \var[\MAT S]= n (\MAT I_{p^2}+\MAT K_{p,p}) (\MAT \Sigma \otimes \MAT \Sigma).
    \label{eq:varW}
\end{equation}
The Kronecker moments were derived in \cite{maiwald2000calculation, sultan1996moments} up to order $4$ and were obtained through~\eqref{eq:char_fun_diff}, \emph{i.e.}, the differentiation of the characteristic function~\eqref{eq:charW} of the Wishart distribution.
In particular, the Kronecker moments of orders $2$ and $3$~are
\begin{equation}
    E[\MAT S\otimes \MAT S]= n^2 (\MAT \Sigma \otimes \MAT \Sigma) + n \left(\MAT K_{p,p}(\MAT \Sigma \otimes \MAT \Sigma)+\vvec(\MAT \Sigma)\vvec(\MAT \Sigma)^\top\right),
\end{equation}
\begin{equation}
\begin{split}
    E[\otimes^3 \MAT S] = & n^3 \otimes^3\MAT \Sigma + n^2 \left(\MAT P+\MAT A\MAT P\MAT A+\MAT B\MAT A\MAT P\MAT A\MAT B+(\MAT A+\MAT B+\MAT B\MAT A\MAT B)\otimes^3\Sigma\right)\\ 
    &+ n (\MAT C\MAT A\MAT P+\MAT P\MAT A\MAT C+\MAT B\MAT A\MAT P\MAT A+\MAT A\MAT V\MAT A\MAT B+(\MAT A\MAT B+\MAT B\MAT A)\otimes^3\MAT \Sigma),
\end{split}
\label{eq:kron3}
\end{equation}
where $\MAT P=(\vvec(\MAT \Sigma)\vvec(\MAT \Sigma)^\top)\otimes \MAT \Sigma $, $\MAT A=\MAT I_p\otimes \MAT K_{p,p}$ and $\MAT B= \MAT K_{p,p}\otimes \MAT I_p$ and $\MAT C=\MAT I_{p^3}+\MAT B$.
The detailed expression for order $4$ can be found in~\cite{sultan1996moments}. 
To the best of our knowledge, no explicit formula was derived for Kronecker moments of arbitrary order for the Wishart distributions.
However, closely linked to Kronecker moments, $E[\MAT S_{12}\MAT S_{34}\dots\MAT S_{2k-1,2k}]$ were shown to be expressed as weighted generating functions of graphs associated with the Wishart distribution in~\cite{numata2010formulas}.

In the literature, several other types of moments of random matrices drawn from the Wishart distribution can also be found.
For instance, explicit formulas for the moments of arbitrary polynomials in the entries of a Wishart-distributed matrix are obtained in~\cite{lu2001macmahon}.
These rely on MacMahon’s master theorem, differentiation of moment generating function, and the representation theory for the symmetric group.
In~\cite{letac2004all, bishop2018introduction}, invariant moments of the form $E[Q(\MAT S)]$, where $Q(\MAT S)$ is a polynomial that only depends on the eigenvalues of $\MAT S$, are computed.
Implementations of the formulas derived in~\cite{letac2004all} are provided in SAGE software~\cite{percincula2022invariant}.
Moreover, \cite{graczyk2007moments} derived  $E[\tr(\MAT S^q)\MAT S^r]$ by leveraging the Wishart process and Itô calculus.
Recently, moments $E[\MAT S^k]$ were computed recursively, with explicit formulas up to order $4$~\cite{hillier2021moments}.
In the meantime, \cite{nagar2023expected} gave explicit expressions for $E[(\tr(\MAT S^q))^r(\tr(\MAT S^m))^j]$ for $m+j+q+r\leq 5$.
Finally, one of the invariant moments that has gained a lot of interest in literature is the expectation of zonal polynomials of $\MAT S$%
\footnote{
    In particular, it offers us an alternative way to regain the characteristic function of the Elliptical Wishart distribution.
}.
We recall that a zonal polynomial of $\MAT S$ at any order $\kappa \in \mathbb{N}^p$, denoted as $C_\kappa(\MAT S)$, where $\kappa=(k_1,\dots,k_p)$ and $ k_1\geq \dots \geq k_p$, is a symmetric, homogeneous polynomial of degree $k:=\sum_{j=1}^pk_j$ of the eigenvalues of $\MAT S$; see \emph{e.g.}, \cite{muirhead2009aspects} for a detailed definition.
It is known that, for any deterministic $\MAT Z\in \mathcal{S}_p$,~\cite{khatri1966certain}
\begin{equation}
    E[C_\kappa(\MAT Z\MAT S)] = 2^k \left(\frac{n}{2}\right)_\kappa  C_\kappa(\MAT Z \MAT\Sigma).
\label{eq:zonalW}
\end{equation}

\subsection{Inverse Wishart distribution}
\label{subsec:iw review}

From the Wishart distribution, it is possible to define another major matrix-variate distribution over $\mathcal{S}^{++}_p$: the Inverse Wishart distribution.
It corresponds to the distribution of the inverse of a random matrix drawn from the Wishart distribution.
A formal definition is provided in Definition~\ref{def:defIW}.
\begin{definition}
We say that $\MAT S$ has the Inverse Wishart distribution with $n$ degrees of freedom and covariance matrix $\MAT\Sigma$, denoted $\mathcal{W}^{-1}(n,\MAT\Sigma)$, if $\MAT S^{-1}\sim \mathcal{W}(n,\MAT\Sigma^{-1})$.
\label{def:defIW}
\end{definition}

Using the inverse substitution in the integral in \eqref{eq:pdfW}, the probability density function of  $\mathcal{W}^{-1}(n,\Sigma)$ is given by
\begin{equation}
    f_{\MAT S}(\MAT S) = \frac{1}{2^{np/2} \Gamma_p(n/2)} |\MAT\Sigma|^{n/2} |\MAT S|^{-\frac{n+p+1}{2}} \exp\left(-\frac{1}{2} tr(\MAT\Sigma \MAT S^{-1})\right).
    \label{eq:pdfIW}
\end{equation}
Unlike the Wishart distribution, no closed-form expression is found for the characteristic function of the Inverse Wishart distribution.
The first and second moments need additional assumptions on $n$ and $p$ to exist \cite{gupta2018matrix}.
When $n>p+1$, the expectation of $\MAT S \sim \mathcal{W}^{-1}(n,\MAT\Sigma)$ is
    \begin{equation}
        E[\MAT S] = \frac{1}{n-p-1}\MAT\Sigma,
        \label{eq:expecIW}
    \end{equation}
and when $n>p+3$, the variance exists and is given by
\begin{equation}
    \var[\MAT S] = \frac{1}{(n-p)(n-p-1)(n-p-3)}\left[(\MAT I_{p^2}+\MAT K_{p,p})(\MAT\Sigma\otimes \MAT \Sigma) +\frac{2}{n-p-1} \vvec(\MAT \Sigma)\vvec(\MAT\Sigma)^\top\right].
    \label{eq:varIW}
\end{equation}
Interestingly, in contrast to Wishart, the Kronecker moments of the Inverse Wishart random matrix are well known up to any order~\cite{von1988moments} thanks to Stokes' theorem.
Given $k$ such that $n>p+2k+1$, the $(k+1)^{\textup{th}}$ Kronecker moment is given by
\begin{equation}
    \vvec(E[\otimes^{k+1}\MAT S]) = \frac{1}{n-p-1} \prod_{i=0}^{k-1}(\MAT I_{p^{2i}}\otimes \MAT A(k-i))(\otimes^{k+1}\vvec(\MAT \Sigma)),
    \label{eq:KronIW}
\end{equation}
where $\MAT A(k)=(n-p-1)\MAT I_{p^{2k}}-\sum_{j=0}^{k-1}(\MAT P_1(j)+\MAT P_2(j))$ with $\MAT P_1(j)=\left(\otimes^2 (\MAT I_{p^j}\otimes \MAT K_{p,p^{k-1-j}}\otimes \MAT I_p)\right)(\MAT I_{p^k}\otimes \MAT K_{p,p^{k}}\otimes \MAT I_p)(\MAT I_{p^{k-1}}+\MAT K_{p^{k+2},p}) $ and $\MAT P_2(j)=\left(\otimes^2 (\MAT I_{p^j}\otimes \MAT K_{p,p^{k-1-j}}\otimes \MAT I_p)\right)(\MAT I_{p^{2k}}+\MAT K_{p,p})$.
Further notice that moments $E[\otimes ^k \vvec(\MAT S)]$, which are different from our definition of Kronecker moments, were derived in~\cite{von1997moments} with the help of a factorization theorem, moments for normally distributed variables, and inverse moments for Chi-squared variables.

As for the Wishart distribution, various moments were derived for the Inverse Wishart distribution, such as the expectation of zonal polynomials; see~\cite{khatri1966certain}.
This allowed to derive the expectation of powers of traces, \emph{i.e.}, $\tr(\MAT S)^k$, powers of determinant, \emph{i.e.}, $|\MAT S|^k$, and powers of sums, \emph{i.e.}, $\tr(\MAT S^k)$ with $\MAT S\sim \mathcal{W}^{-1}(n,\MAT\Sigma)$ 
\cite{hillier2022properties}.
In addition, \cite{kan2022expectations} computed moments of $\MAT S^r P(\MAT S)$ where $P$ is a power-sum symmetric function.
Finally, \cite{nagar2023expected} obtained $E[(\tr(\MAT S^q))^r(\tr(\MAT S^i))^j]$ for $i+j+q+r\leq 5$.

\subsection{Elliptical extensions}
The Wishart distribution can be generalized in various ways.
In this work, we are interested in Elliptical Wishart distributions.
They generalize the Wishart distribution the same way multivariate elliptical distributions generalize the Gaussian one.
These are formally defined in Definition~\ref{def:defEW}.
\begin{definition}
    We say that a random matrix $\MAT S$ follows the Elliptical Wishart distribution $\mathcal{EW}(n,\MAT\Sigma,h_{np})$, with $n$ degrees of freedom, center $\MAT\Sigma$, and density generator $h_{np}:\mathbb{R}_+\to\mathbb{R}$, if $\MAT S=\MAT X\MAT X^\top$ such that $\MAT X \sim \mathcal{E}_{p,n}(0,\MAT\Sigma,\MAT I_n,h_{np})$.
\label{def:defEW}
\end{definition}

Consequently, the same density generators $h_{np}$ as in the multivariate case can be considered to define Elliptical Wishart distributions.
For instance, one can leverage density generators from the Student $t$-, generalized Gaussian, Kotz, Bessel models, \emph{etc}.
Examples are provided in Section \ref{subsec:examples}.
The probability density function of $\mathcal{EW}(n,\MAT\Sigma,h_{np})$, which can be obtained through some integration substitution tricks~\cite{anderson1958introduction}, is
\begin{equation}
    f_{\MAT S}(\MAT S) = \frac{\pi^{np/2}}{\Gamma_p(n/2)} |\MAT \Sigma|^{-\frac{n}{2}} |\MAT S|^{\frac{n-p-1}{2}} h_{np}\left( \tr(\MAT\Sigma^{-1}\MAT S)\right).
    \label{eq:pdfEW}
\end{equation}

Few theoretical results about Elliptical Wishart distributions are already known.
One of the few statistical properties that were derived is the characteristic function of $\MAT S \sim \mathcal{EW}(n,\MAT\Sigma,h_{np})$ when $h_{np}$ admits a Taylor series expansion~\cite{bekker2015wishart}.
It is given, for any $\MAT Z \in \mathcal{S}_p$, by
\begin{equation}
    \Psi_{\MAT S}(\MAT Z) = \pi^{np/2} \sum_{k=0}^{+\infty}\frac{i^k}{k!} \left(\frac{\int_0^{+\infty}h_{np}(r)r^{\frac{np}{2}+k-1}dr}{\Gamma\left(\frac{np}{2}+k\right)}\right) \sum_{|\kappa|=k} \left(\frac{n}{2}\right)_\kappa C_\kappa(\MAT Z\MAT \Sigma).
    \label{eq:charEW}
\end{equation}
In Section~\ref{subsec:eW new}, a more straightforward way to derive it is provided.
Neither the expectation nor the variance of Elliptical Wishart distributions have been computed in the literature.
One of the significant contributions of this work is to derive them in Section~\ref{subsec:eW new}.
However, some other statistical properties have been established, such as the joint density function of eigenvalues%
\footnote{
As a matter of fact,~\cite[Theorem 3.2.17.]{muirhead2009aspects} provides a formula to compute the joint PDF of eigenvalues of a random SPD matrix knowing its PDF.
},
$E[\tr(\MAT S)^k]$ and $E[|\MAT S|^k]$; see \emph{e.g.}, \cite{bekker2015wishart,di2011wishart}.
The expectation of zonal polynomials and the cumulative function have also been derived~\cite{caro2016matrix}.
Notice that, from a more practical point of view, the information geometry along with estimation and classification methods based on Elliptical Wishart distributions are proposed in~\cite{ayadi2023elliptical,ayadi2023t}.

Akin to the definition of the Inverse Wishart distributions, it is possible to define the Inverse Elliptical Wishart definitions.
A proper definition is provided in Definition~\ref{def:defEIW}.
\begin{definition}
    We say that $\MAT S$ has the Inverse Elliptical Wishart distribution $\mathcal{EW}^{-1}(n,\MAT\Sigma,h_{np})$ with $n$ degrees of freedom, center $\MAT\Sigma$ and density generator $h_{np}:\mathbb{R}_+\to\mathbb{R}$, if $\MAT S^{-1}\sim\mathcal{EW}(n,\MAT\Sigma^{-1},h_{np})$.
\label{def:defEIW}
\end{definition}

We would highlight that Definition~\ref{def:defEIW} holds as long as $n\geq p$ and $\MAT \Sigma$ is non-singular.
In fact, in this case, $\MAT X \sim \mathcal{E}_{p,n}(\MAT 0,\MAT \Sigma,\MAT I_n,h_{np})$ has a full rank with probability $1$.
Consequently, $\MAT S=\MAT X\MAT X^\top$ is non-singular with probability $1$.
From~\cite{fang1990generalized}, the probability density function of the Inverse Elliptical Wishart distribution is
\begin{equation}
    f_{\MAT S}(\MAT S) = \frac{\pi^{np/2} }{\Gamma_p(n/2)} |\MAT\Sigma|^{n/2} |\MAT S|^{-\frac{n+p+1}{2}} h_{np}\left( \tr(\MAT \Sigma \MAT S^{-1})\right).
    \label{eq:pdfEIW}
\end{equation}
\begin{remark}
    Considering that the Inverse Elliptical Wishart distribution is the extension of the Inverse Wishart distribution that consists in replacing the Gaussian density generator function in \eqref{eq:pdfIW} with a general density generator $h_{np}$, it is obvious that the ``Inverse Elliptical Wishart'' is the same thing as the ``Elliptical Inverse Wishart''.
\end{remark}
For the Inverse Elliptical Wishart distribution, apart from deriving the probability density function~\eqref{eq:pdfEIW} of $\MAT S\sim\mathcal{EW}^{-1}(n,\MAT\Sigma,h_{np})$, no other statistical properties have been derived yet.
In this paper, more precisely in Section~\ref{subsec:eiw new}, the expectation, variance, and Kronecker moments of an Inverse-Elliptical-Wishart-distributed matrix are derived.

To conclude this section, we provide in Table~\ref{tab:recap} a summary of the main statistical properties known before this paper of the Wishart, Inverse Wishart, Elliptical Wishart, and Inverse Elliptical Wishart distributions.
With the present paper, Table~\ref{tab:recap} changes quite a lot.
The expectation, variance, and Kronecker moments from any order of Elliptical Wishart and Inverse Elliptical Wishart distributions are derived.
In particular, the Kronecker moments of the Wishart distribution, which were only known up to order four, are derived up to any order.
Nevertheless, only the characteristic functions of the Inverse Wishart and Inverse Elliptical Wishart distributions remain unknown.

\begin{table}[t!]
\centering
\caption{Recapitulating table of the background about Wishart, Inverse Wishart, Elliptical Wishart, and Inverse Elliptical Wishart distributions}
\begin{tabular}{|c|c|c|c|c|c|}
\hline
Distribution & PDF & characteristic  function & Expectation & Variance& Kronecker moments\tabularnewline \hline
\begin{tabular}[c]{@{}c@{}}Wishart\\ $\mathcal{W}(n,\MAT\Sigma)$\end{tabular} &\eqref{eq:pdfW}  &\eqref{eq:charW}  &\eqref{eq:expecW}  &\eqref{eq:varW}&up to order 4, \cite{sultan1996moments}    \\ \hline
\begin{tabular}[c]{@{}c@{}}Inverse Wishart\\ $\mathcal{W}^{-1}(n,\MAT\Sigma)$\end{tabular} &\eqref{eq:pdfIW}  &No closed form &\eqref{eq:expecIW}  &\eqref{eq:varIW} &\eqref{eq:KronIW}  \\ \hline
\begin{tabular}[c]{@{}c@{}}Elliptical Wishart\\ $\mathcal{EW}(n,\MAT\Sigma,h_{np})$\end{tabular} &\eqref{eq:pdfEW}  &\eqref{eq:charEW}  &Not derived yet   &Not derived yet &Not derived yet  \\ \hline
\begin{tabular}[c]{@{}c@{}}Inverse Elliptical Wishart\\ $\mathcal{EW}^{-1}(n,\MAT\Sigma,h_{np})$\end{tabular} &\eqref{eq:pdfEIW}  &Not derived yet  &Not derived yet   &Not derived yet &Not derived yet  \\ \hline
\end{tabular}
\label{tab:recap}
\end{table}

\section{Main contributions: Elliptical Wishart and Inverse Elliptical Wishart statistical properties}
\label{sec:statistics}

This section contains the main methodological contributions of this paper.
In particular, it provides the expectation and variance of the Elliptical Wishart and Inverse Elliptical Wishart distributions.
Concerning the characteristic function of the Elliptical Wishart distribution, an alternative (and simpler) proof is obtained as compared to the one given in \cite{bekker2015wishart} while the characteristic function of the Inverse Elliptical Wishart remains without a closed form.
All these new results stem from the stochastic representation of an Elliptical-Wishart-distributed matrix, given in Theorem \ref{thm:stoc rep}. 
As for the multivariate elliptical case (see Section \ref{sec:review}), this representation relies on the second-order modular, $\mathcal{Q}$, and a random matrix uniformly distributed on the unit sphere.
\begin{theorem}[Stochastic Representation]
    Let $\MAT S \sim \mathcal{EW}(n,\MAT\Sigma,h_{np})$ where $\MAT\Sigma \in \mathcal{S}_p^{++}$.
    There exist a non negative random variable $\mathcal{Q}$ and a random matrix $\MAT U\in \mathbb{R}^{p\times n}$ such that
    \begin{equation}
        \MAT S \stackrel{d}{=} \mathcal{Q} \, \MAT\Sigma^{1/2}(\MAT U\MAT U^\top)\MAT\Sigma^{1/2},
    \end{equation}
    and
    \begin{itemize}
        \item $\MAT U$ is uniformly distributed on the unit sphere $\mathcal{C}_{p,n}$.
        \item The probability density function of $\mathcal{Q}$ is $t\mapsto \frac{\pi^{np/2}}{\Gamma(np/2)}h_{np}(t)t^{\frac{np}{2}-1}$.
        \item $\MAT U$ and $\mathcal{Q}$ are independent.
    \end{itemize}
\label{thm:stoc rep}
\end{theorem}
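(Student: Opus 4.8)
The plan is to transport the multivariate stochastic representation~\eqref{eq:storepvectors} through the vectorization operator. First I would write, using Definition~\ref{def:defEW}, $\MAT S = \MAT X\MAT X^\top$ with $\MAT X\sim\mathcal{E}_{p,n}(\MAT 0,\MAT\Sigma,\MAT I_n,h_{np})$; by Definition~\ref{def:matrix_elliptical_dist} this means $\VEC x:=\vvec(\MAT X)\sim\mathcal{E}_{np}(\VEC 0,\MAT I_n\otimes\MAT\Sigma,h_{np})$. Then I would apply~\eqref{eq:storepvectors} to $\VEC x$ with $d=np$, scatter $\MAT I_n\otimes\MAT\Sigma$ and generator $h_{np}$: this produces a non-negative random variable $\mathcal{Q}$, with density $t\mapsto\frac{\pi^{np/2}}{\Gamma(np/2)}h_{np}(t)t^{np/2-1}$ (the second-order modular density recalled after~\eqref{eq:storepvectors}), and a random vector $\VEC v$ uniform on $\mathcal{C}_{np}$ and independent of $\mathcal{Q}$, such that $\VEC x\stackrel{d}{=}\sqrt{\mathcal{Q}}\,(\MAT I_n\otimes\MAT\Sigma)^{1/2}\VEC v$. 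I would simplify $(\MAT I_n\otimes\MAT\Sigma)^{1/2}=\MAT I_n\otimes\MAT\Sigma^{1/2}$ by uniqueness of the SPD square root, since $\MAT I_n\otimes\MAT\Sigma^{1/2}$ is SPD and squares to $\MAT I_n\otimes\MAT\Sigma$. (Alternatively, taking $\mathcal{Q}=\tr(\MAT\Sigma^{-1}\MAT S)$ and $\VEC v=(\MAT I_n\otimes\MAT\Sigma^{-1/2})\VEC x/\sqrt{\mathcal{Q}}$ — well defined a.s.\ because $\MAT X\neq\MAT 0$ with probability one — makes the identity an almost-sure one.)

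Next I would set $\MAT U:=\vvec^{-1}(\VEC v)\in\mathbb{R}^{p\times n}$, so $\|\MAT U\|_2=\|\VEC v\|_2=1$. Since $\vvec$ is a linear bijection preserving the Euclidean norm, it carries the uniform law on $\mathcal{C}_{np}$ to the uniform law on $\mathcal{C}_{p,n}$, giving the first bullet; and because $\MAT U$ is a deterministic bijective function of $\VEC v$, independence of $\mathcal{Q}$ and $\MAT U$ follows from that of $\mathcal{Q}$ and $\VEC v$, giving the third bullet; the density of $\mathcal{Q}$ is exactly the second bullet. Finally I would use $\vvec(\MAT A\MAT B\MAT C)=(\MAT C^\top\otimes\MAT A)\vvec(\MAT B)$ with $\MAT A=\MAT\Sigma^{1/2}$, $\MAT B=\MAT U$, $\MAT C=\MAT I_n$ to get $(\MAT I_n\otimes\MAT\Sigma^{1/2})\vvec(\MAT U)=\vvec(\MAT\Sigma^{1/2}\MAT U)$, hence $\MAT X\stackrel{d}{=}\sqrt{\mathcal{Q}}\,\MAT\Sigma^{1/2}\MAT U$; applying the continuous map $\MAT Y\mapsto\MAT Y\MAT Y^\top$ and using $(\MAT\Sigma^{1/2})^\top=\MAT\Sigma^{1/2}$ yields $\MAT S\stackrel{d}{=}\mathcal{Q}\,\MAT\Sigma^{1/2}(\MAT U\MAT U^\top)\MAT\Sigma^{1/2}$.

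I do not expect a genuine obstacle: the argument is a clean reduction to the vector case. The two points requiring some care are, first, justifying that spherical uniformity is preserved by vectorization — which rests on $\vvec$ being an isometry between $(\mathbb{R}^{p\times n},\|\cdot\|_2)$ and $(\mathbb{R}^{np},\|\cdot\|_2)$, so that orthogonal (and hence uniform-preserving) structure transports intact — and second, the bookkeeping to push the distributional/almost-sure identity through the quadratic map $\MAT X\mapsto\MAT X\MAT X^\top$ and to ensure the normalized direction $\MAT U$ is well defined, for which one invokes that $\MAT X$ has full rank (in particular is nonzero) with probability one when $n\geq p$ and $\MAT\Sigma$ is nonsingular, as already noted after Definition~\ref{def:defEIW}.
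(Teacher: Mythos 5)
Your proposal is correct and follows essentially the same route as the paper's own proof: vectorize $\MAT X$, invoke the multivariate stochastic representation~\eqref{eq:storepvectors}, use $(\MAT I_n\otimes\MAT\Sigma)^{1/2}=\MAT I_n\otimes\MAT\Sigma^{1/2}$ together with $\vvec(\MAT\Sigma^{1/2}\MAT U)=(\MAT I_n\otimes\MAT\Sigma^{1/2})\vvec(\MAT U)$, and push the identity through $\MAT X\mapsto\MAT X\MAT X^\top$. The only difference is that you spell out why uniformity on the sphere is preserved by the vectorization isometry, a point the paper leaves implicit.
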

\begin{proof}
    By definition, there exists $\MAT X \in \mathbb{R}^{p\times n}$ such that $\MAT S=\MAT X\MAT X^\top$ and $\MAT X \sim \mathcal{E}_{p,n}(0,\MAT\Sigma,\MAT I_n,h_{np})$.
    Recall that $\vvec(\MAT X)\sim\mathcal{E}_{pn}(0,\MAT I_n\otimes \MAT \Sigma,h_{np})$.
    Hence, $\vvec(\MAT X)\stackrel{d}{=}\sqrt{\mathcal{Q}} (\MAT I_n\otimes\MAT \Sigma)^{1/2}\VEC u$, where $\mathcal{Q}$ is a non negative random variable and $\VEC u$ is uniformly distributed on the $np$-dimensional unit sphere.
    Moreover, $\VEC u$ and $ \mathcal{Q}$ are independent.
    The density function of $\mathcal{Q}$, which can be found in~\cite{cambanis1981theory}, is $t\mapsto \frac{\pi^{np/2}}{\Gamma(np/2)}h_{np}(t)t^{\frac{np}{2}-1}$.
    Let $\MAT U\in\mathbb{R}^{p\times n}$ such that $\vvec(\MAT U)=\VEC u$.
    It is easy to check that $\tr(\MAT U \MAT U^\top)=\VEC u^\top \VEC u = 1$.
    Thus, $\MAT U\in\mathcal{C}_{p,n}$.
    Furthermore, $(\MAT I_n\otimes \MAT\Sigma)^{1/2} = \MAT I_n\otimes \MAT\Sigma^{1/2}$.
    It follows that $\vvec(\MAT X)\stackrel{d}{=}\sqrt{\mathcal{Q}}(\MAT I_n\otimes \MAT\Sigma^{1/2})\vvec(\MAT U) =\sqrt{\mathcal{Q}}\vvec(\MAT\Sigma^{1/2}\MAT U)$.
    Consequently, $\MAT X\stackrel{d}{=}\sqrt{\mathcal{Q}}\MAT\Sigma^{1/2}\MAT U$ and $\MAT S\stackrel{d}{=}\mathcal{Q} \MAT\Sigma^{1/2}(\MAT U \MAT U^\top)\MAT \Sigma^{1/2}$.
    This concludes the proof.
\end{proof}

From the stochastic representation of Theorem~\ref{thm:stoc rep}, the statistical properties of $\mathcal{Q}$ are well-known.
Moreover, one can recognize that the random matrix $\MAT U\MAT U^\top$ follows the Normalized Wishart distribution~\cite{taylor2017generalization}.
To obtain the statistical properties of the Elliptical Wishart and Inverse Elliptical Wishart distributions, we first need to obtain those of $\MAT U\MAT U^\top$.
This is achieved in Section~\ref{subsec:normW}.
From there, the main results for the Elliptical Wishart and Inverse Elliptical Wishart distributions are given in Sections~\ref{subsec:eW new} and~\ref{subsec:eiw new}, respectively.

\begin{remark} \label{rk:bilodeau}
    All distributions considered here (Elliptical Wishart and Inverse Elliptical Wishart with $\MAT I_p$ as covariance matrix and Normalized Wishart)  are rotationally-invariant, \emph{i.e.}, $\MAT S\stackrel{d}{=}\MAT H\MAT S \MAT H^\top$ for any orthogonal matrix $\MAT H\in\mathbb{R}^{p\times p}$.
    For such distributions, \cite[Proposition 13.2]{bilodeau1999theory} shows that $E[\MAT S]=\alpha \MAT I_p$ and $\var[\MAT S] = \sigma_1(\MAT I_{p^2}+\MAT K_{p,p})+ \sigma_2 \vvec(\MAT I_p)\vvec(\MAT I_p)^\top$, where $\alpha \in \mathbb{R}$, $\sigma_1 \geq 0$ and $\sigma_2 \geq \frac{-2\sigma_1}{p}$.
    This proposition thus provides the general form of the expectation and variance of a rotationally invariant distribution.
    For the distributions we consider, while it enables us to obtain the expectation easily, it does not allow us to identify $\sigma_1$ and $\sigma_2$ straightforwardly.
\end{remark}

\subsection{Normalized Wishart distribution}
\label{subsec:normW}

In this section, the statistical properties of $\MAT U\MAT U^\top$ in the stochastic representation of Theorem~\ref{thm:stoc rep} are studied.
One can observe that, given $\MAT S\sim\mathcal{W}(n,\MAT I_p)$%
\footnote{
    As a matter of fact, it works for any Elliptical Wishart distribution, \emph{i.e.}, $\MAT S\sim\mathcal{EW}(n,\MAT I_p,h_{np})$.
},
$\MAT U\MAT U^\top \stackrel{d}{=}\frac{\MAT S}{\tr(\MAT S)}$.
Thus, as mentioned above, $\MAT U\MAT U^\top$ follows the so-called Normalized Wishart distribution~\cite{taylor2017generalization}.
In the following, let $\MAT V=\MAT U\MAT U^\top$.
After properly defining the Normalized Wishart distribution in Definition~\ref{def:defNW}, first and second-order moments of $\MAT V$ and $\MAT V^{-1}$ are derived in Proposition~\ref{prop:uniform}.
\begin{definition}[Normalized Wishart distribution]
    Let $\MAT S \sim \mathcal{W}(n,\MAT I_p)$ a Wishart matrix.
    The corresponding Normalized Wishart matrix is given by
    \begin{equation}
        \MAT V = \frac{\MAT S}{\tr(\MAT S)}.
    \end{equation}
    Its distribution is denoted $\mathcal{NW}(n,p)$.
\label{def:defNW}
\end{definition}
\begin{proposition}
    Let $\MAT V \sim \mathcal{NW}(n,p)$.
    Then,
    \begin{enumerate}[itemsep=6pt]
        \item $E[\MAT V]=\frac{1}{p}\MAT I_p$,
        \item $E[\vvec(\MAT V)\vvec(\MAT V)^\top]=\cfrac{1}{np(np+2)} \, \left(n(\MAT I_{p^2}+\MAT K_{p,p})+n^2\vvec(\MAT I_p)\vvec(\MAT I_p)^\top\right)$,
    \item $\MAT V$ is invertible with probability $1$,
        \item $E[\MAT V^{-1}]=\cfrac{np-2}{n-p-1} \, \MAT I_p$,
        \item $\begin{aligned}[t]
            E\left[\vvec(\MAT V^{-1})\vvec(\MAT V^{-1})^\top\right]=\cfrac{(np-2)(np-4)}{(n-p)(n-p-1)(n-p-3)}  &\left[(\MAT I_{p^2}+\MAT K_{p,p})\right. \\
            &\quad \quad \left. +(n-p-2)\vvec(\MAT I_p)\vvec(\MAT I_p)^\top\right].
        \end{aligned}$
    \end{enumerate}
\label{prop:uniform}
\end{proposition}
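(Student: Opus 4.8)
The plan is to reduce every item to the (Inverse) Wishart first and second moments recalled in Section~\ref{sec:review}, via the classical independence between the trace of a spherical Wishart matrix and its normalization — which here is a direct consequence of Theorem~\ref{thm:stoc rep}. Fix $\MAT S\sim\mathcal{W}(n,\MAT I_p)$. Since $\mathcal{W}(n,\MAT I_p)$ is the instance of $\mathcal{EW}(n,\MAT I_p,h_{np})$ corresponding to the Gaussian density generator, Theorem~\ref{thm:stoc rep} gives $\MAT S\stackrel{d}{=}\mathcal{Q}\,\MAT U\MAT U^\top$ with $\mathcal{Q}$ and $\MAT U$ independent and $\MAT U$ uniform on $\mathcal{C}_{p,n}$. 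Because $\tr(\MAT U\MAT U^\top)=1$ and $\mathcal{Q}>0$ almost surely, this yields jointly $(\tr(\MAT S),\,\MAT S/\tr(\MAT S))\stackrel{d}{=}(\mathcal{Q},\,\MAT U\MAT U^\top)$; hence $t:=\tr(\MAT S)$ is independent of $\MAT V\stackrel{d}{=}\MAT S/\tr(\MAT S)$, and $t\sim\chi^2_{np}$ (a sum of $np$ squared i.i.d.\ standard Gaussians). The scalar moments needed are the standard ones: $E[t]=np$, $E[t^2]=np(np+2)$, and, when finite, $E[t^{-1}]=\frac{1}{np-2}$ and $E[t^{-2}]=\frac{1}{(np-2)(np-4)}$. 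Note also that, by rotational invariance, Remark~\ref{rk:bilodeau} already forces $E[\MAT V]=\alpha\MAT I_p$ and $E[\vvec(\MAT V)\vvec(\MAT V)^\top]$, $E[\vvec(\MAT V^{-1})\vvec(\MAT V^{-1})^\top]$ to have the displayed $(\MAT I_{p^2}+\MAT K_{p,p})$-plus-$\vvec(\MAT I_p)\vvec(\MAT I_p)^\top$ structure, so the computation below only pins down the scalar coefficients.

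For items 1 and 2, write $\MAT S\stackrel{d}{=}t\,\MAT V$ with $t\perp\MAT V$, so $E[\MAT S]=E[t]\,E[\MAT V]$ and $E[\vvec(\MAT S)\vvec(\MAT S)^\top]=E[t^2]\,E[\vvec(\MAT V)\vvec(\MAT V)^\top]$. Substituting $E[\MAT S]=n\MAT I_p$ from~\eqref{eq:expecW} and dividing by $E[t]=np$ gives $E[\MAT V]=\frac1p\MAT I_p$. Combining $\var[\MAT S]=n(\MAT I_{p^2}+\MAT K_{p,p})$ from~\eqref{eq:varW} (with $\MAT\Sigma=\MAT I_p$) and $E[\MAT S]=n\MAT I_p$ yields $E[\vvec(\MAT S)\vvec(\MAT S)^\top]=n(\MAT I_{p^2}+\MAT K_{p,p})+n^2\vvec(\MAT I_p)\vvec(\MAT I_p)^\top$, and dividing by $E[t^2]=np(np+2)$ gives item 2. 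Item 3 is immediate: for $n\ge p$, $\MAT U$ has full row rank with probability one (the rank-deficient matrices form a null set on $\mathcal{C}_{p,n}$), so $\MAT V=\MAT U\MAT U^\top\in\mathcal{S}_p^{++}$ almost surely, which also makes items 4 and 5 meaningful.

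For items 4 and 5, invert the relation: from $\MAT V\stackrel{d}{=}\MAT S/\tr(\MAT S)$ and invertibility one gets $\MAT S^{-1}\stackrel{d}{=}t^{-1}\MAT V^{-1}$, with $t\perp\MAT V^{-1}$ since $\MAT V^{-1}$ is a function of $\MAT V$ alone. Hence $E[\MAT S^{-1}]=E[t^{-1}]\,E[\MAT V^{-1}]$ and $E[\vvec(\MAT S^{-1})\vvec(\MAT S^{-1})^\top]=E[t^{-2}]\,E[\vvec(\MAT V^{-1})\vvec(\MAT V^{-1})^\top]$. As $\MAT S^{-1}\sim\mathcal{W}^{-1}(n,\MAT I_p)$, I substitute $E[\MAT S^{-1}]=\frac{1}{n-p-1}\MAT I_p$ from~\eqref{eq:expecIW} and $\var[\MAT S^{-1}]$ from~\eqref{eq:varIW} (with $\MAT\Sigma=\MAT I_p$), form $E[\vvec(\MAT S^{-1})\vvec(\MAT S^{-1})^\top]=\var[\MAT S^{-1}]+\vvec(E[\MAT S^{-1}])\vvec(E[\MAT S^{-1}])^\top$, and then solve by multiplying through by $np-2$ and by $(np-2)(np-4)$, respectively. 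These steps require the usual Inverse-Wishart existence conditions $n>p+1$ for item 4 and $n>p+3$ for item 5, which additionally guarantee $np>2$ and $np>4$ so that $E[t^{-1}]$ and $E[t^{-2}]$ are finite.

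The only non-routine point is the independence of $t=\tr(\MAT S)$ and $\MAT V=\MAT S/\tr(\MAT S)$; once it is available from Theorem~\ref{thm:stoc rep}, the rest is bookkeeping with known formulas. The single genuine computation left is the coefficient simplification in item 5: collecting the $\vvec(\MAT I_p)\vvec(\MAT I_p)^\top$ contributions from $\var[\MAT S^{-1}]$ and from $\vvec(E[\MAT S^{-1}])\vvec(E[\MAT S^{-1}])^\top$ and using the identity $(n-p)(n-p-3)+2=(n-p-1)(n-p-2)$, which is exactly what collapses the expression into the clean factor $(n-p-2)$ appearing in the statement.
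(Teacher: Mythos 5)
Your proof is correct and follows essentially the same route as the paper's: write $\MAT S\stackrel{d}{=}\mathcal{Q}\,\MAT U\MAT U^\top$ via Theorem~\ref{thm:stoc rep}, use the independence of $\mathcal{Q}\sim\chi^2_{np}$ and $\MAT V\stackrel{d}{=}\MAT U\MAT U^\top$, and divide the known Wishart and Inverse Wishart first and second moments by the appropriate $\chi^2_{np}$ moments. The only (minor, welcome) differences are that you spell out the joint-distribution argument giving the independence of $\tr(\MAT S)$ and $\MAT S/\tr(\MAT S)$ a bit more carefully, and you make explicit the coefficient simplification $(n-p)(n-p-3)+2=(n-p-1)(n-p-2)$ that the paper leaves implicit in item 5.
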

\begin{proof}
    Let $\MAT S \sim \mathcal{W}(n,\MAT I_p)$ such that $\MAT V = \MAT S / \tr(\MAT S)$.
    From Theorem~\ref{thm:stoc rep}, $\MAT S = \mathcal{Q}\MAT V$, where $\mathcal{Q}\sim\chi^2_{np}$ and $\MAT V \stackrel{d}{=}\MAT U\MAT U^\top$ with $\MAT U$ uniformly distributed on the unit sphere $\mathcal{C}_{p,n}$.
    Moreover, $\mathcal{Q}$ and $\MAT U$ are independent (hence, $\mathcal{Q}$ and $\MAT V$ are also independent).
    First, to prove 1., remembering that $E[\mathcal{Q}]=np$ and $E[\MAT S]=n\MAT I_p$ (from~\eqref{eq:expecW}), it is enough to notice that
    \begin{equation*}
        E[\MAT S]=n\MAT I_p=E[\mathcal{Q}]E[\MAT V] = np E(\MAT V).
    \end{equation*}
    To prove 2., recall that $E[\mathcal{Q}^2]=np(np+2)$ and from~\eqref{eq:varW}, $E[\vvec(\MAT S)\vvec(\MAT S)^\top]=n(\MAT I_{p^2}+\MAT K_{p,p})+n^2\vvec(\MAT I_p)\vvec(\MAT I_p)^\top$.
    It is then enough to compute
    \begin{multline*}
        E[\vvec(\MAT S)\vvec(\MAT S)^\top]
        = n(\MAT I_{p^2}+\MAT K_{p,p})+n^2\vvec(\MAT I_p)\vvec(\MAT I_p)^\top\\ 
        =E[\mathcal{Q}^2]E[\vvec(\MAT V)\vvec(\MAT V)^\top]
        = np(np+2) E[\vvec(\MAT V)\vvec(\MAT V)^\top].
    \end{multline*}
    To prove 3., it is enough to notice that, by construction, $\MAT S$ is invertible with probability $1$.
    To prove 4., remember that $E[\mathcal{Q}^{-1}]=\frac{1}{np-2}$ and $E[\MAT S^{-1}]=\frac{1}{n-p-1}\MAT I_p$ (Equation~\eqref{eq:expecIW}).
    To conclude, it is then enough to observe
    \begin{equation}
        E[\MAT S^{-1}] = \frac{1}{n-p-1}\MAT I_p =
        E[\mathcal{Q}^{-1}]E[\MAT V^{-1}] = \frac{1}{np-2} E[\MAT V^{-1}].
    \end{equation}
    Finally, to prove 5., recall $E[\mathcal{Q}^{-2}]=\frac{1}{(np-2)(np-4)}$ and, from~\eqref{eq:varIW},
    \begin{equation*}
        E[\vvec(\MAT S^{-1})\vvec(\MAT S^{-1})^\top]=\frac{1}{(n-p)(n-p-1)(n-p-3)}[(\MAT I_{p^2}+\MAT K_{p,p})+(n-p-2)\vvec(\MAT I_p)\vvec(\MAT I_p)^\top].
    \end{equation*}
    The result follows immediately from combining these with
    \begin{equation*}
        E[\vvec(\MAT S^{-1})\vvec(\MAT S^{-1})^\top] = E[\mathcal{Q}^{-2}]E[\vvec(\MAT V^{-1})\vvec(\MAT V)^{-1})^\top].
    \end{equation*}
    This concludes the proof.
\end{proof}


The stochastic representation of Theorem~\ref{thm:stoc rep} along with Normalized Wishart distribution offers an alternative way to compute the characteristic function of the Elliptical Wishart distribution~\eqref{eq:charEW}.
To obtain it, we thus need the characteristic function of the Normalized Wishart distribution.
It is provided in Proposition~\ref{prop:char unif}.
Before we get to the proposition, the first required thing is the expectation of zonal polynomials of the matrix $\MAT Z\MAT V$.
This is achieved in Lemma~\ref{prop:zonal unif}.
\begin{lemma} 
    Let $\MAT V \sim \mathcal{NW}(n,p)$ and let $\MAT Z\in\mathcal{S}_p$.
    The expectation of the $\kappa$-order zonal polynomial of $\MAT Z \MAT V$ is given by
    \begin{equation}
       E[C_\kappa(\MAT Z \MAT V)] = \frac{\Gamma\left(\frac{np}{2}\right)}{\Gamma\left(\frac{np}{2}+k\right)} \left(\frac{n}{2}\right)_\kappa  C_\kappa(\MAT Z).
    \label{eq:zonal unif}
    \end{equation}
\label{prop:zonal unif}
\end{lemma}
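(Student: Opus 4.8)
The plan is to lean on the stochastic representation of Theorem~\ref{thm:stoc rep} exactly as in the proof of Proposition~\ref{prop:uniform}, combined with the known expectation of zonal polynomials for the Wishart distribution, equation~\eqref{eq:zonalW}. Write $\MAT S\sim\mathcal{W}(n,\MAT I_p)$, so that $\MAT S\stackrel{d}{=}\mathcal{Q}\MAT V$ with $\mathcal{Q}\sim\chi^2_{np}$, $\MAT V\sim\mathcal{NW}(n,p)$, and $\mathcal{Q}$ and $\MAT V$ independent. The single structural fact I would invoke is that a zonal polynomial $C_\kappa$ is homogeneous of degree $k=\sum_{j=1}^p k_j$ in the eigenvalues of its argument, hence $C_\kappa(c\,\MAT A)=c^k C_\kappa(\MAT A)$ for any scalar $c>0$ and any $\MAT A\in\mathcal{S}_p$. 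Applying this with $\MAT A=\MAT Z\MAT V$ and $c=\mathcal{Q}$ gives $C_\kappa(\MAT Z\MAT S)=C_\kappa(\mathcal{Q}\,\MAT Z\MAT V)=\mathcal{Q}^k\,C_\kappa(\MAT Z\MAT V)$.

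Next I would take expectations and use independence of $\mathcal{Q}$ and $\MAT V$ to factor: $E[C_\kappa(\MAT Z\MAT S)]=E[\mathcal{Q}^k]\,E[C_\kappa(\MAT Z\MAT V)]$. The left-hand side is given by~\eqref{eq:zonalW} specialized to $\MAT\Sigma=\MAT I_p$, namely $E[C_\kappa(\MAT Z\MAT S)]=2^k\left(\tfrac{n}{2}\right)_\kappa C_\kappa(\MAT Z)$. For the scalar factor, the moments of a chi-squared variable give $E[\mathcal{Q}^k]=2^k\,\Gamma\!\left(\tfrac{np}{2}+k\right)/\Gamma\!\left(\tfrac{np}{2}\right)$; this is precisely~\eqref{eq:moment_modular} evaluated for the Gaussian density generator, so it is already available. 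Since $np/2>0$, this moment is finite and nonzero, so dividing is legitimate.

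Combining the two identities and solving for the quantity of interest yields
\begin{equation*}
    E[C_\kappa(\MAT Z\MAT V)] = \frac{E[C_\kappa(\MAT Z\MAT S)]}{E[\mathcal{Q}^k]} = \frac{2^k\left(\tfrac{n}{2}\right)_\kappa C_\kappa(\MAT Z)}{2^k\,\Gamma\!\left(\tfrac{np}{2}+k\right)/\Gamma\!\left(\tfrac{np}{2}\right)} = \frac{\Gamma\!\left(\tfrac{np}{2}\right)}{\Gamma\!\left(\tfrac{np}{2}+k\right)}\left(\tfrac{n}{2}\right)_\kappa C_\kappa(\MAT Z),
\end{equation*}
which is~\eqref{eq:zonal unif}. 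I do not anticipate a genuine obstacle here: the only point requiring a word of care is the homogeneity/degree bookkeeping for $C_\kappa$ (the degree is $k=|\kappa|$, matching the power of $\mathcal{Q}$) and noting that the pull-out of $\mathcal{Q}^k$ as a scalar is valid because $\mathcal{Q}$ is a positive scalar random variable independent of $\MAT V$; everything else is a direct substitution.
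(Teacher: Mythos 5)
Your proposal is correct and follows essentially the same route as the paper's proof: write $\MAT S\stackrel{d}{=}\mathcal{Q}\MAT V$ via the stochastic representation, use the degree-$k$ homogeneity of $C_\kappa$ to pull out $\mathcal{Q}^k$, factor by independence, and divide the Wishart zonal-polynomial expectation~\eqref{eq:zonalW} by the chi-squared moment $E[\mathcal{Q}^k]=2^k\Gamma\left(\frac{np}{2}+k\right)/\Gamma\left(\frac{np}{2}\right)$. No gaps; the bookkeeping on the degree $k=|\kappa|$ and the positivity of the chi-squared moment are exactly the points that need care, and you address both.
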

\begin{proof}
    Let $\MAT S \sim \mathcal{W}(n,\MAT I_p)$ such that $\MAT V = \MAT S / \tr(\MAT S)$.
    From Theorem~\ref{thm:stoc rep}, $\MAT S = \mathcal{Q}\MAT V$, where $\mathcal{Q}\sim\chi^2_{np}$ and $\MAT V \stackrel{d}{=}\MAT U\MAT U^\top$ with $\MAT U$ uniformly distributed on the unit sphere $\mathcal{C}_{p,n}$.
    Furthermore, $\mathcal{Q}$ and $\MAT V$ are independent.
    Therefore, since $C_\kappa(\alpha \MAT A)= \alpha^{k}C_\kappa(\MAT A)$ for $\alpha \in\mathbb{R}$ and $\MAT A \in \mathcal{S}_p$,
    \begin{equation*}
        E[C_\kappa(\MAT Z\MAT S)] =
        E[C_\kappa(\mathcal{Q} \MAT Z\MAT V)] =
        E[\mathcal{Q}^k C_\kappa( \MAT Z\MAT V)] =
        E[\mathcal{Q}^k] E[ C_\kappa( \MAT Z\MAT V)]. 
    \end{equation*}
    From~\cite{simon2002probability}, we get $E[\mathcal{Q}^k]=2^k\frac{\Gamma\left(\frac{np}{2}+k\right)}{\Gamma\left(\frac{np}{2}\right)}$.
    The result then stems from~\eqref{eq:zonalW}.
\end{proof}
\begin{proposition} 
    Let $\MAT V \sim \mathcal{NW}(n,p)$ .
    The characteristic function of $\MAT V$ at $\MAT Z\in\mathcal{S}_p^+$ is
    \begin{equation}
       \Psi_{\MAT V}(\MAT Z) = \sum_{k=0}^{+\infty}\frac{i^k}{k!}\frac{\Gamma\left(\frac{np}{2}\right)}{\Gamma\left(\frac{np}{2}+k\right)} \sum_{|\kappa|=k}\left(\frac{n}{2}\right)_\kappa  C_\kappa(\MAT Z).
    \label{eq:char unif}
    \end{equation}
\label{prop:char unif}
\end{proposition}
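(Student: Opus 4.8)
The plan is to evaluate the characteristic function directly from its definition, $\Psi_{\MAT V}(\MAT Z) = E[\exp(i\tr(\MAT V^\top\MAT Z))] = E[\exp(i\tr(\MAT Z\MAT V))]$, where the second equality uses that $\MAT V$ is symmetric. I would first expand the complex exponential as a power series, $\exp(i\tr(\MAT Z\MAT V)) = \sum_{k=0}^{+\infty}\frac{i^k}{k!}(\tr(\MAT Z\MAT V))^k$, and then invoke the classical zonal-polynomial identity $(\tr\MAT A)^k = \sum_{|\kappa|=k} C_\kappa(\MAT A)$ (see, e.g., \cite{muirhead2009aspects}) with $\MAT A = \MAT Z\MAT V$. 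This rewrites the exponential as a double series $\sum_{k=0}^{+\infty}\frac{i^k}{k!}\sum_{|\kappa|=k} C_\kappa(\MAT Z\MAT V)$, so that, after exchanging $E[\cdot]$ with the series, $\Psi_{\MAT V}(\MAT Z) = \sum_{k=0}^{+\infty}\frac{i^k}{k!}\sum_{|\kappa|=k} E[C_\kappa(\MAT Z\MAT V)]$.

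The one point requiring care is the interchange of the expectation with the infinite sum, and this is where the normalization of $\MAT V$ is essential. Since $\MAT V = \MAT U\MAT U^\top$ is positive semidefinite with $\tr(\MAT V) = \|\MAT U\|_2^2 = 1$, one has $|\tr(\MAT Z\MAT V)| \leq \|\MAT Z\|_2\,\tr(\MAT V) = \|\MAT Z\|_2$; hence the partial sums of the exponential series are bounded in modulus by the constant (thus integrable) function $e^{\|\MAT Z\|_2}$, and dominated convergence legitimizes the term-by-term integration. Because $\MAT V$ ranges over the compact set of trace-one positive semidefinite matrices, this obstacle is mild; it is really the only analytic subtlety in the argument.

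Finally I would substitute the value of $E[C_\kappa(\MAT Z\MAT V)]$ from Lemma~\ref{prop:zonal unif}, namely $E[C_\kappa(\MAT Z\MAT V)] = \frac{\Gamma(np/2)}{\Gamma(np/2+k)}\left(\frac{n}{2}\right)_\kappa C_\kappa(\MAT Z)$ for every $\kappa$ with $|\kappa| = k$. Since the prefactor $\frac{\Gamma(np/2)}{\Gamma(np/2+k)}$ depends only on $k$ and not on the individual partition $\kappa$, it factors out of the inner sum, yielding exactly $\Psi_{\MAT V}(\MAT Z) = \sum_{k=0}^{+\infty}\frac{i^k}{k!}\frac{\Gamma(np/2)}{\Gamma(np/2+k)}\sum_{|\kappa|=k}\left(\frac{n}{2}\right)_\kappa C_\kappa(\MAT Z)$, which is the claimed identity. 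As a sanity check, one can note that feeding this expression into the stochastic representation of Theorem~\ref{thm:stoc rep} (integrating over $\mathcal{Q}$) recovers the known characteristic function~\eqref{eq:charEW} of the Elliptical Wishart distribution.
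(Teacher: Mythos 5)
Your proposal is correct and follows essentially the same route as the paper's own proof: bound $|\tr(\MAT Z\MAT V)|$ by the spectral norm of $\MAT Z$ using $\tr(\MAT V)=1$, apply dominated convergence to exchange expectation and series, invoke $\tr(\MAT Z\MAT V)^k=\sum_{|\kappa|=k}C_\kappa(\MAT Z\MAT V)$, and conclude with Lemma~\ref{prop:zonal unif}. The only difference is cosmetic (you apply the zonal-polynomial expansion before taking expectations rather than after), so there is nothing to add.
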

\begin{proof}
    Since $\MAT V \in \mathcal{S}_p^{+}$, we have $\tr(\MAT Z\MAT V) \leq \|\MAT Z\| \tr(\MAT V)= \|\MAT Z\|$ where $\|\MAT Z\|$ is the spectral norm of $\MAT Z$, and consequently, $E[\tr(\MAT Z \MAT V)^k]\leq \|\MAT Z\|^k$.
    Thus, using the dominated convergence theorem, one can write
    \begin{equation*}
        \Psi_{\MAT V}(\MAT Z)  = E[\exp(i\tr(\MAT Z\MAT V))] = \sum_{k=0}^{+\infty}\frac{i^k}{k!} E[\tr(\MAT Z\MAT V)^k].
    \end{equation*}
    Moreover, from $\tr(\MAT Z\MAT V)^k = \sum_{|\kappa|=k}C_\kappa(\MAT Z\MAT V)$ and Lemma~\ref{prop:zonal unif}, we get $$E[\tr(\MAT Z\MAT V)^k] = \frac{\Gamma\left(\frac{np}{2}\right)}{\Gamma\left(\frac{np}{2}+k\right)} \sum_{|\kappa|=k}\left(\frac{n}{2}\right)_\kappa  C_\kappa(\MAT Z).$$
    This is enough to conclude.
\end{proof}

\subsection{Elliptical Wishart distribution}
\label{subsec:eW new}

We now have all the necessary tools to derive the statistical properties of the Elliptical Wishart distribution.
First, leveraging the stochastic representation from Theorem~\ref{thm:stoc rep} along with the expectation and variance of the Normalized Wishart distribution from Proposition~\ref{prop:uniform}, the expectation and variance of the Elliptical Wishart distribution $\mathcal{EW}(n,\MAT \Sigma,h_{np})$ are derived.
This is achieved in Propositions~\ref{prop:expec} and~\ref{prop:variance}.
\begin{proposition}[Expectation]
    Suppose that moment $m_1^{(h_{np})}$ from~\eqref{eq:moment_modular} is finite.
    The expectation of $\MAT S\sim\mathcal{EW}(n,\MAT\Sigma,h_{np})$ is 
    \begin{equation}
        \mathbb{E}[\MAT S] = a_{n,p} \MAT\Sigma, 
        \label{eq:esp}
    \end{equation}
\label{prop:expec}
where $a_{n,p}=\frac{m_1^{(h_{np})}}{p}$.
\end{proposition}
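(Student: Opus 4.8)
The plan is to reduce the computation to the stochastic representation of Theorem~\ref{thm:stoc rep}. Writing $\MAT S \stackrel{d}{=} \mathcal{Q}\,\MAT\Sigma^{1/2}\MAT V\MAT\Sigma^{1/2}$ with $\MAT V=\MAT U\MAT U^\top$, and using that $\mathcal{Q}$ and $\MAT U$ (hence $\MAT V$) are independent, linearity of the expectation gives $E[\MAT S] = E[\mathcal{Q}]\,\MAT\Sigma^{1/2}E[\MAT V]\,\MAT\Sigma^{1/2}$, so it suffices to know $E[\mathcal{Q}]$ and $E[\MAT V]$.

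First I would identify $E[\mathcal{Q}]$. By Theorem~\ref{thm:stoc rep}, the density of $\mathcal{Q}$ is $t\mapsto \frac{\pi^{np/2}}{\Gamma(np/2)}h_{np}(t)t^{\frac{np}{2}-1}$, which is precisely the density of the second-order modular of an $\mathcal{E}_{np}(\VEC 0,\MAT I_{np},h_{np})$ vector; hence, by~\eqref{eq:moment_modular} applied with $d=np$ and $k=1$, $E[\mathcal{Q}] = m_1^{(h_{np})}$, which is finite by hypothesis. Next, Proposition~\ref{prop:uniform}.1 (recalling that $\MAT V=\MAT U\MAT U^\top\sim\mathcal{NW}(n,p)$) gives $E[\MAT V]=\tfrac1p\MAT I_p$. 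Plugging in, $E[\MAT S] = m_1^{(h_{np})}\,\MAT\Sigma^{1/2}\big(\tfrac1p\MAT I_p\big)\MAT\Sigma^{1/2} = \tfrac{m_1^{(h_{np})}}{p}\MAT\Sigma = a_{n,p}\MAT\Sigma$, as claimed.

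There is no real obstacle here: the only point needing care is the finiteness and interchange, which is guaranteed by the assumption that $m_1^{(h_{np})}$ exists together with the independence of $\mathcal{Q}$ and $\MAT V$. As an alternative route, one could first treat $\MAT\Sigma=\MAT I_p$: then $\mathcal{EW}(n,\MAT I_p,h_{np})$ is rotationally invariant, so Remark~\ref{rk:bilodeau} yields $E[\MAT S]=\alpha\MAT I_p$, and $\alpha$ is pinned down by taking traces, $\alpha p = E[\tr(\MAT S)] = E[\mathcal{Q}]\,E[\tr(\MAT V)] = m_1^{(h_{np})}$ since $\tr(\MAT V)=1$; the general case then follows from the congruence $\MAT S\mapsto\MAT\Sigma^{1/2}\MAT S\MAT\Sigma^{1/2}$ applied to the stochastic representation.
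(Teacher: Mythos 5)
Your proposal is correct and follows essentially the same route as the paper's proof: apply the stochastic representation of Theorem~\ref{thm:stoc rep}, use the independence of $\mathcal{Q}$ and $\MAT U$ to factor the expectation, identify $E[\mathcal{Q}]=m_1^{(h_{np})}$, and invoke point 1 of Proposition~\ref{prop:uniform} for $E[\MAT U\MAT U^\top]=\tfrac1p\MAT I_p$. The additional remarks on finiteness and the alternative rotational-invariance argument are sound but not needed beyond what the paper already does.
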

\begin{proof}
    From Theorem~\ref{thm:stoc rep}, $\MAT S=\mathcal{Q}\MAT\Sigma^{1/2}\MAT U\MAT U^\top\MAT\Sigma^{1/2}$.
    Since $\MAT U$ and $\mathcal{Q}$ are independent, $$E[\MAT S]= E[\mathcal{Q}]\MAT\Sigma^{1/2}E[\MAT U\MAT U^\top]\MAT\Sigma^{1/2}$$.
    As $\MAT U \MAT U^\top \sim \mathcal{NW}(n,p)$, the point 1. of Proposition~\ref{prop:uniform} yields the result.
\end{proof}
\begin{proposition}[Variance]
    Suppose that $m_1^{(h_{np})}$ and $m_2^{(h_{np})}$ from~\eqref{eq:moment_modular} are finite.
    The variance of $\MAT S\sim\mathcal{EW}(n,\MAT\Sigma,h_{np})$ is
    \begin{equation}
       \var[\MAT S] = n b_{n,p} (\MAT I_{p^2}+\MAT K_{p,p})(\MAT\Sigma\otimes\MAT\Sigma) + n^2 c_{n,p} \vvec(\MAT\Sigma)\vvec(\MAT\Sigma)^\top,
       \label{eq:var}
    \end{equation}
    where $b_{n,p}=\frac{m_2^{(h_{np})}}{np(np+2)}$ and $c_{n,p}=b_{n,p} -\left(\frac{m_1^{(h_{np})}}{np}\right)^2$.
\label{prop:variance}
\end{proposition}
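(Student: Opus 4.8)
The plan is to derive everything from the stochastic representation of Theorem~\ref{thm:stoc rep} together with the second-order moments of the Normalized Wishart distribution in Proposition~\ref{prop:uniform}. Writing $\MAT S \stackrel{d}{=} \mathcal{Q}\,\MAT\Sigma^{1/2}\MAT V\MAT\Sigma^{1/2}$ with $\MAT V = \MAT U\MAT U^\top \sim \mathcal{NW}(n,p)$ independent of the modular variable $\mathcal{Q}$, whose density is $t \mapsto \frac{\pi^{np/2}}{\Gamma(np/2)}h_{np}(t)t^{np/2-1}$, we have $E[\mathcal{Q}^2] = m_2^{(h_{np})}$ by~\eqref{eq:moment_modular}, finite by hypothesis. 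Since $\var[\MAT S] = E[\vvec(\MAT S)\vvec(\MAT S)^\top] - \vvec(E[\MAT S])\vvec(E[\MAT S])^\top$ and $E[\MAT S] = a_{n,p}\MAT\Sigma$ is already known from Proposition~\ref{prop:expec}, all that remains is to evaluate the second-order moment.

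First I would vectorize: $\vvec(\MAT S) \stackrel{d}{=} \mathcal{Q}\,(\MAT\Sigma^{1/2}\otimes\MAT\Sigma^{1/2})\vvec(\MAT V)$, so by independence of $\mathcal{Q}$ and $\MAT V$,
\[
E[\vvec(\MAT S)\vvec(\MAT S)^\top] = E[\mathcal{Q}^2]\,(\MAT\Sigma^{1/2}\otimes\MAT\Sigma^{1/2})\,E[\vvec(\MAT V)\vvec(\MAT V)^\top]\,(\MAT\Sigma^{1/2}\otimes\MAT\Sigma^{1/2}).
\]
Next I would plug in point~2 of Proposition~\ref{prop:uniform} and conjugate its three building blocks $\MAT I_{p^2}$, $\MAT K_{p,p}$ and $\vvec(\MAT I_p)\vvec(\MAT I_p)^\top$ by $\MAT\Sigma^{1/2}\otimes\MAT\Sigma^{1/2}$, using the standard identities $(\MAT\Sigma^{1/2}\otimes\MAT\Sigma^{1/2})^2 = \MAT\Sigma\otimes\MAT\Sigma$, the commutation relation $\MAT K_{p,p}(\MAT A\otimes\MAT B) = (\MAT B\otimes\MAT A)\MAT K_{p,p}$ (so $\MAT K_{p,p}$ commutes with $\MAT\Sigma^{1/2}\otimes\MAT\Sigma^{1/2}$), and $(\MAT A\otimes\MAT A)\vvec(\MAT I_p) = \vvec(\MAT A\MAT A^\top)$ for symmetric $\MAT A$. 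This should collapse the expression to $n b_{n,p}(\MAT I_{p^2}+\MAT K_{p,p})(\MAT\Sigma\otimes\MAT\Sigma) + n^2 b_{n,p}\vvec(\MAT\Sigma)\vvec(\MAT\Sigma)^\top$ with $b_{n,p} = m_2^{(h_{np})}/(np(np+2))$.

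Finally I would subtract the mean term: from Proposition~\ref{prop:expec}, $\vvec(E[\MAT S])\vvec(E[\MAT S])^\top = a_{n,p}^2\vvec(\MAT\Sigma)\vvec(\MAT\Sigma)^\top$ with $a_{n,p} = m_1^{(h_{np})}/p$, which only modifies the rank-one coefficient, turning it into $n^2 b_{n,p} - a_{n,p}^2 = n^2\bigl(b_{n,p} - (m_1^{(h_{np})}/(np))^2\bigr) = n^2 c_{n,p}$, that is exactly~\eqref{eq:var}. I do not expect a real obstacle here: the only delicate point is the bookkeeping of the commutation-matrix identities when conjugating by $\MAT\Sigma^{1/2}\otimes\MAT\Sigma^{1/2}$, and making sure that the finiteness of $m_1^{(h_{np})}$ and $m_2^{(h_{np})}$ is precisely what is needed for $E[\mathcal{Q}]$ and $E[\mathcal{Q}^2]$, hence all the moments at play, to exist.
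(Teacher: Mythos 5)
Your proposal is correct and follows essentially the same route as the paper: the stochastic representation of Theorem~\ref{thm:stoc rep}, vectorization via $(\MAT C^\top\otimes\MAT A)\vvec(\MAT B)=\vvec(\MAT A\MAT B\MAT C)$, independence of $\mathcal{Q}$ and $\MAT U$, and point~2 of Proposition~\ref{prop:uniform}. The only difference is that you spell out the conjugation identities and the subtraction of the mean term, which the paper leaves implicit; your bookkeeping there is accurate.
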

\begin{proof}
    From Theorem~\ref{thm:stoc rep}, $\MAT S=\mathcal{Q}\MAT\Sigma^{1/2}\MAT U\MAT U^\top\MAT\Sigma^{1/2}$.
    Therefore, since $(\MAT C^\top\otimes\MAT A)\vvec(\MAT B)=\vvec(\MAT A\MAT B\MAT C)$, we have $\vvec(\MAT S)=\mathcal{Q}(\MAT\Sigma^{1/2}\otimes \MAT\Sigma^{1/2})\vvec(\MAT U\MAT U^\top)$.
    Since $\MAT U$ and $\mathcal{Q}$ are independent,
    \begin{equation*}
        E[\vvec(\MAT S)\vvec(\MAT S)^\top] = E[\mathcal{Q}^2](\MAT\Sigma^{1/2}\otimes \MAT\Sigma^{1/2}) E[\vvec(\MAT U\MAT U^\top)\vvec(\MAT U\MAT U^\top)^\top](\MAT\Sigma^{1/2}\otimes \MAT\Sigma^{1/2}).
    \end{equation*}
    The result is finally obtained by using 2. from Proposition~\ref{prop:uniform}.
\end{proof}

\begin{remark}[Characteristic function]
    The stochastic representation from Theorem~\ref{thm:stoc rep} offers an alternative way from~\cite{caro2008noncentral} to derive the characteristic function~\eqref{eq:charEW} of the Elliptical Wishart distribution.
    It is actually simpler because it does not use the integral equality proved in  of~\cite[Theorem 2.1.1]{caro2008noncentral}.
    Instead, the independence of $\mathcal{Q}$ and $\MAT U$ from the stochastic representation along with characteristic functions of $\mathcal{Q}$ and of the Normalized Wishart distribution from Proposition~\ref{prop:char unif} are leveraged.
    Indeed, from $\MAT S=\mathcal{Q}\MAT\Sigma^{1/2}(\MAT U\MAT U^\top)\MAT\Sigma^{1/2}$, we get
    \begin{equation*}
        \Psi_{\MAT S}(\MAT Z)
        = E\left[E\left[\exp\left(i \mathcal{Q}\tr(\MAT Z \MAT\Sigma^{1/2}\MAT U\MAT U^\top\MAT\Sigma^{1/2} )\right)|\mathcal{Q}\right] \right]
        = E\left[ \Psi_{\MAT U \MAT U^\top}(\mathcal{Q}\MAT\Sigma^{1/2}\MAT Z\MAT\Sigma^{1/2})  \right]. 
    \end{equation*}
    Then, $C_\kappa(\MAT\Sigma^{1/2}\MAT Z\MAT\Sigma^{1/2}) = C_\kappa(\MAT\Sigma\MAT Z)$ and ~\eqref{eq:char unif} yields
    $$\Psi_{\MAT U \MAT U^\top}(\mathcal{Q}\MAT\Sigma^{1/2}\MAT Z\MAT\Sigma^{1/2}) = \sum_{k=0}^{+\infty}\frac{{i}^k}{k!}\mathcal{Q}^k\frac{\Gamma\left(\frac{np}{2}\right)}{\Gamma\left(\frac{np}{2}+k\right)} \sum_{|\kappa|=k}\left(\frac{n}{2}\right)_\kappa  C_\kappa(\MAT\Sigma\MAT Z).$$
    From there, let $\rho: x\in \mathbb{R}\mapsto \sum_{k=0}^{+\infty} \sqrt{m_{2k}^{(h_{np})}} \left(\frac{\Gamma\left(\frac{np}{2}\right)}{\Gamma\left(\frac{np}{2}+k\right)} \right)^{1/p} \frac{\Gamma\left(\frac{n}{2}+k\right)}{\Gamma\left(\frac{n}{2}\right)}  \frac{x^k}{k!}$.
    Assuming $h_{np}$ admits a Taylor series expansion, we have $\sum_{k=0}^{+\infty} m_k^{(h_{np})}\frac{x^k}{k!}<+\infty$.
    Therefore, using the D'Alembert criterion, one can prove that $\rho(x)$ converges absolutely for any $x$.
    From \cite[Lemma 6.5]{gross1987special}, $|C_\kappa(\MAT A)|\leq p^k \|\MAT A\|^k$ for any $\MAT A \in \mathcal{S}_p$.
    Since $k!\geq k_1!\dots k_p!$ (where $k=\sum_{j=1}^p k_j$), we also have
    \begin{equation*}
        \sum_{k=0}^{+\infty}\left|m_k^{(h_{np})}\frac{\Gamma\left(\frac{np}{2}\right)}{\Gamma\left(\frac{np}{2}+k\right)} \sum_{|\kappa|=k}\left(\frac{n}{2}\right)_\kappa  \frac{C_\kappa(\MAT\Sigma\MAT Z)}{k!}\right| 
        \leq \sum_{k=0}^{+\infty}\sum_{|\kappa|=k}m_k^{(h_{np})}\frac{\Gamma\left(\frac{np}{2}\right)}{\Gamma\left(\frac{np}{2}+k\right)} \left(\frac{n}{2} \right)_\kappa\frac{p^k \|\MAT Z\MAT \Sigma\|^k}{k_1!\dots k_p!}.
    \end{equation*}
    Furthermore, notice that $E[\mathcal{Q}^k]=E[\prod_{j=1}^p \mathcal{Q}^{k_j}] \leq \prod_{j=1}^p \sqrt{E[\mathcal{Q}^{2k_j}]}$.
    In addition, $\left( \frac{\Gamma\left(\frac{np}{2}+k\right)}{\Gamma\left(\frac{np}{2}\right)}\right)^p \geq \prod_{j=1}^p \frac{\Gamma\left(\frac{np}{2}+k_j\right)}{\Gamma\left(\frac{np}{2}\right)}$ and $\left(\frac{n}{2}\right)_\kappa \leq \prod_{j=1}^p\left(\frac{n}{2}\right)_{k_j} =\prod_{j=1}^p\frac{\Gamma\left(\frac{n}{2}+k_j\right)}{\Gamma\left(\frac{n}{2}\right)}$.
    Hence,
    \begin{multline*}
\sum_{k=0}^{+\infty}\left|m_k^{(h_{np})}\frac{\Gamma(\frac{np}{2})}{\Gamma(\frac{np}{2}+k)} \sum_{|\kappa|=k}\left(\frac{n}{2}\right)_\kappa  \frac{C_\kappa(\MAT\Sigma\MAT Z)}{k!}\right| \\
        \leq \prod_{j=1}^p \sum_{k_j=0}^{+\infty}\sqrt{m_{2k_j}^{(h_{np})}}\left(\frac{\Gamma(\frac{np}{2})}{\Gamma(\frac{np}{2}+k_j)}\right)^{\frac{1}p} \frac{\Gamma(\frac{n}{2}+k_j)}{\Gamma(\frac{n}{2})}  \frac{p^{k_j} \|\MAT Z\MAT \Sigma\|^{k_j}}{k_j!},
    \end{multline*}
    which is finite thanks to the absolute convergence of $\rho$.
    Finally, the use of the dominated convergence theorem yields the result $\Psi_{\MAT Z}(\MAT S)=\sum_{k=0}^{+\infty}\frac{{i}^k}{k!}m_k^{(h_{np})}\frac{\Gamma(\frac{np}{2})}{\Gamma(\frac{np}{2}+k)} \sum_{|\kappa|=k}\left(\frac{n}{2}\right)_\kappa  C_\kappa(\MAT\Sigma\MAT Z)$.
\end{remark}

The last statistical property of the Elliptical Wishart distribution studied in this work is the Kronecker moments at any order.
Again, we resort to the stochastic representation from Theorem~\ref{thm:stoc rep}.
Hence, to obtain Kronecker moments of the Elliptical Wishart distribution, we rely on Kronecker moments of the Wishart distribution.
As mentioned in Section~\ref{subsec:w review}, Kronecker moments of a Wishart-distributed matrix are derived in~\cite{sultan1996moments} up to order $4$ by recursively differentiating $4$ times the characteristic function of the Wishart distribution.
Our first major contribution is to provide a novel recursive way to compute Kronecker moments of the Wishart distribution up to any order, which leads to non-recursive explicit formulas.
This is achieved in Proposition~\ref{prop:kronW}.
From there, Kronecker moments of the Elliptical Wishart distribution are obtained in Proposition~\ref{prop:kronEW}.
\begin{proposition}[Kronecker moments]
    Let $\MAT S \sim \mathcal{W}(n,\MAT \Sigma)$ where $\MAT\Sigma \in \mathcal{S}_p^{++}$.
    Then, for $k \in \mathbb{N}^*$,
    \begin{equation}
       E[\otimes^{k+1}\MAT S] =  n E[\otimes^k \MAT S]\otimes \MAT\Sigma -2 \frac{\partial E[\otimes^{k}\MAT S]}{\partial \MAT \Sigma^{-1}}.
       \label{eq:kronW_rec}
    \end{equation}
    In a non-recursive way,
    \begin{equation}
        \vvec(E[\otimes^{k}\MAT S]) = \left(\otimes^{2k}\MAT \Sigma^{1/2}\right) \prod_{l=0}^{k-1}\left[\MAT I_{p^{2l}}\otimes\MAT M_{(k-1-l)}\right] \left(\otimes^k \vvec(\MAT I_p)\right),
        \label{eq:kronW}
    \end{equation}
    where
    \begin{align*}
        \MAT M_{(k)} & = \left[n (\MAT I_{p^k}\otimes \MAT K_{p,p^k} \otimes \MAT I_p ) +\MAT J_{(k)} \right] \MAT K_{p^{2k},p^2},
        \\
        \MAT J_{(k)} & = \left[\sum_{l=0}^{k-1}(\MAT H_{(k,l)}\otimes \MAT H_{(k,l)})\right](\MAT I_{p^{k-1}}\otimes \MAT K_{p,p^{k-1}}\otimes \MAT I_p)\left(\MAT I_{p^{k-1}}\otimes\MAT G\right)\\
        &\qquad \qquad \qquad \qquad \qquad \qquad \qquad \qquad(\MAT I_{p^{k-1}} \otimes \MAT K_{p^{k-1},p^2}\otimes \MAT I_{p^2}) (\MAT I_{p^{k}} \otimes \MAT K_{p,p^{k}}\otimes \MAT I_p),
        \\
        \MAT H_{(k,l)} & = \left(\MAT I_{p^l}\otimes \MAT K_{p,p^{k-1-l}}\otimes \MAT I_{p}\right),
        \\
        \MAT G & = \frac{1}{2}\left[(\MAT K_{pp}\otimes\MAT K_{pp})(\MAT I_p\otimes \MAT K_{pp}\otimes \MAT I_p) +\MAT I_{p^4}\right]\left[\MAT K_{pp}\otimes (\MAT I_{p^2}+\MAT K_{pp})\right].
    \end{align*}
\label{prop:kronW}
\end{proposition}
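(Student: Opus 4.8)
The plan is to prove the recursion~\eqref{eq:kronW_rec} first, and then obtain the closed form~\eqref{eq:kronW} by unfolding that recursion through an induction on $k$.

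For the recursion I would differentiate under the integral sign with respect to the precision matrix. Writing the Wishart density~\eqref{eq:pdfW} as $f_{\MAT S}(\MAT S)=c_{n,p}|\MAT\Theta|^{n/2}|\MAT S|^{(n-p-1)/2}\exp(-\tfrac12\tr(\MAT\Theta\MAT S))$ with $\MAT\Theta=\MAT\Sigma^{-1}$, and using the Kroneckerian matrix-derivative identities $\partial|\MAT\Theta|^{n/2}/\partial\MAT\Theta=\tfrac n2|\MAT\Theta|^{n/2}\MAT\Sigma$ and $\partial\tr(\MAT\Theta\MAT S)/\partial\MAT\Theta=\MAT S$ (the $\alpha_{k\ell}$ weights in the footnote's convention are exactly what makes these clean), one gets $\partial f_{\MAT S}(\MAT S)/\partial\MAT\Theta=(\tfrac n2\MAT\Sigma-\tfrac12\MAT S)f_{\MAT S}(\MAT S)$. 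Since $\otimes^k\MAT S$ does not depend on $\MAT\Theta$, the product rule for this derivative gives $\partial[(\otimes^k\MAT S)f_{\MAT S}(\MAT S)]/\partial\MAT\Theta=\tfrac n2(\otimes^k\MAT S)\otimes\MAT\Sigma\,f_{\MAT S}(\MAT S)-\tfrac12(\otimes^{k+1}\MAT S)f_{\MAT S}(\MAT S)$. Integrating over $\MAT S$ and interchanging $\partial/\partial\MAT\Theta$ with $\int$ — justified by dominated convergence, since for $\MAT\Theta$ in a compact neighbourhood the integrand is bounded by a constant times $(1+\|\MAT S\|^{k+1})|\MAT S|^{(n-p-1)/2}\exp(-\varepsilon\tr\MAT S)$ and all Wishart moments are finite — yields $\partial E[\otimes^k\MAT S]/\partial\MAT\Sigma^{-1}=\tfrac n2 E[\otimes^k\MAT S]\otimes\MAT\Sigma-\tfrac12 E[\otimes^{k+1}\MAT S]$, which is~\eqref{eq:kronW_rec} after rearranging.

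For~\eqref{eq:kronW} I would induct on $k$. The base case $k=1$ reads $\vvec(E[\MAT S])=(\MAT\Sigma^{1/2}\otimes\MAT\Sigma^{1/2})\MAT M_{(0)}\vvec(\MAT I_p)$, and here $\MAT M_{(0)}=n\MAT I_{p^2}$ because the sums defining $\MAT J_{(0)}$ are empty while $\MAT K_{p,1}$ and $\MAT K_{p^0,p^2}$ are identities, so the right-hand side is $n\vvec(\MAT\Sigma)$, matching~\eqref{eq:expecW}. For the inductive step I would write the hypothesis as $\vvec(E[\otimes^k\MAT S])=(\otimes^{2k}\MAT\Sigma^{1/2})\VEC w_k$ with $\VEC w_k=\prod_{l=0}^{k-1}[\MAT I_{p^{2l}}\otimes\MAT M_{(k-1-l)}](\otimes^k\vvec(\MAT I_p))$ a constant vector, apply $\vvec$ to~\eqref{eq:kronW_rec}, and handle the two summands separately. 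For $n\,\vvec(E[\otimes^k\MAT S]\otimes\MAT\Sigma)$, combine $\vvec(\MAT A\otimes\MAT B)=(\MAT I_{p^k}\otimes\MAT K_{p,p^k}\otimes\MAT I_p)(\vvec\MAT A\otimes\vvec\MAT B)$ with $\vvec(\MAT\Sigma)=(\MAT\Sigma^{1/2}\otimes\MAT\Sigma^{1/2})\vvec(\MAT I_p)$; since $\MAT I_{p^k}\otimes\MAT K_{p,p^k}\otimes\MAT I_p$ merely permutes tensor factors it commutes with $\otimes^{2k+2}\MAT\Sigma^{1/2}$, so this summand equals $(\otimes^{2k+2}\MAT\Sigma^{1/2})\,n(\MAT I_{p^k}\otimes\MAT K_{p,p^k}\otimes\MAT I_p)\MAT K_{p^{2k},p^2}(\vvec(\MAT I_p)\otimes\VEC w_k)$, which is precisely the $n(\MAT I_{p^k}\otimes\MAT K_{p,p^k}\otimes\MAT I_p)$ part of $\MAT M_{(k)}$ acting on $\vvec(\MAT I_p)\otimes\VEC w_k$.

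The step I expect to be the main obstacle is the derivative summand $-2\,\vvec(\partial E[\otimes^k\MAT S]/\partial\MAT\Sigma^{-1})$. Because $E[\otimes^k\MAT S]$ is homogeneous of degree $k$ in $\MAT\Sigma$, the chain rule turns this into a sum over the $k$ occurrences of $\MAT\Sigma$ of the inverse-differentiation $\partial\MAT\Sigma/\partial\MAT\Sigma^{-1}=-\tfrac12[\vvec(\MAT\Sigma)\vvec(\MAT\Sigma)^\top+\MAT K_{p,p}(\MAT\Sigma\otimes\MAT\Sigma)]$ inserted in the corresponding slot; this $k$-fold sum is the origin of $\sum_{l=0}^{k-1}$ in $\MAT J_{(k)}$. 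I would (i) derive that inverse-derivative identity directly from the definition of the Kroneckerian derivative — which is, incidentally, the computation that reconciles~\eqref{eq:kronW_rec} with the known order-two moment~\eqref{eq:varW} — then (ii) substitute $\vvec(\MAT\Sigma)=(\MAT\Sigma^{1/2}\otimes\MAT\Sigma^{1/2})\vvec(\MAT I_p)$ and $\MAT\Sigma\otimes\MAT\Sigma=(\otimes^2\MAT\Sigma^{1/2})(\otimes^2\MAT\Sigma^{1/2})$ so that, after factoring out the pair of $\MAT\Sigma^{1/2}$'s attached to the hit slot, what remains is exactly the fixed block $\MAT G$, and (iii) use the commutation matrices $\MAT H_{(k,l)}$ together with the surrounding $\MAT I\otimes\MAT K\otimes\MAT I$ factors to relocate the $l$-th pair of $\MAT\Sigma^{1/2}$'s to the front, so that the summand lands as $(\otimes^{2k+2}\MAT\Sigma^{1/2})\MAT J_{(k)}\MAT K_{p^{2k},p^2}(\vvec(\MAT I_p)\otimes\VEC w_k)$. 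Adding the two contributions gives $(\otimes^{2k+2}\MAT\Sigma^{1/2})[n(\MAT I_{p^k}\otimes\MAT K_{p,p^k}\otimes\MAT I_p)+\MAT J_{(k)}]\MAT K_{p^{2k},p^2}(\vvec(\MAT I_p)\otimes\VEC w_k)$, and recognising $\vvec(\MAT I_p)\otimes\VEC w_k=(\MAT I_{p^2}\otimes\prod_{l=0}^{k-1}[\MAT I_{p^{2l}}\otimes\MAT M_{(k-1-l)}])(\otimes^{k+1}\vvec(\MAT I_p))$ collapses everything to~\eqref{eq:kronW} at order $k+1$. The real difficulty lies in step~(iii): one must repeatedly invoke $\MAT K_{m,n}\MAT K_{n,m}=\MAT I$, the split $\MAT K_{p,qr}=(\MAT K_{p,q}\otimes\MAT I_r)(\MAT I_q\otimes\MAT K_{p,r})$, the sliding rule $\MAT K_{m,n}(\MAT A\otimes\MAT B)=(\MAT B\otimes\MAT A)\MAT K_{q,s}$, and the $\vvec$-of-Kronecker identity recalled earlier, in order to compress a long product of commutation matrices into the factorised shapes claimed for $\MAT M_{(k)}$, $\MAT J_{(k)}$, $\MAT H_{(k,l)}$ and $\MAT G$ — pinning down every index is the delicate part.
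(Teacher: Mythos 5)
Your first half is correct and is exactly the paper's argument: the identity $\frac{\partial f_{\MAT S}(\MAT S)}{\partial \MAT\Sigma^{-1}}=\left(\frac{n}{2}\MAT\Sigma-\frac{1}{2}\MAT S\right)f_{\MAT S}(\MAT S)$ plus differentiation under the integral gives~\eqref{eq:kronW_rec}, and your base case $\MAT M_{(0)}=n\MAT I_{p^2}$ and the treatment of the $n\,E[\otimes^k\MAT S]\otimes\MAT\Sigma$ summand via $\vvec(\MAT A\otimes\MAT B)$ and $\MAT K_{p^{2k},p^2}$ also match the paper's unfolding of the recursion.

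The gap is in the derivative summand, and it is not just index bookkeeping. You propose to differentiate $E[\otimes^k\MAT S]$ slot by slot, inserting $\frac{\partial\MAT\Sigma}{\partial\MAT\Sigma^{-1}}=-\frac{1}{2}\left[\vvec(\MAT\Sigma)\vvec(\MAT\Sigma)^\top+\MAT K_{p,p}(\MAT\Sigma\otimes\MAT\Sigma)\right]$ into the $l$-th tensor position. But your inductive hypothesis represents $E[\otimes^k\MAT S]$ through $\otimes^{2k}\MAT\Sigma^{1/2}$, i.e.\ as $(\otimes^k\MAT\Sigma^{1/2})\,\MAT C_k\,(\otimes^k\MAT\Sigma^{1/2})$ with $\MAT C_k$ constant, and the map $\MAT A\mapsto\MAT A^{1/2}\MAT B\MAT A^{1/2}$ is homogeneous of degree one but \emph{not} linear in $\MAT A$ when $\MAT B\neq\MAT I$; so the chain rule does not let you replace ``the $l$-th pair of $\MAT\Sigma^{1/2}$'s'' by $\frac{\partial\MAT\Sigma}{\partial\MAT\Sigma^{-1}}$. (The abstract polarization of the degree-$k$ polynomial $\MAT\Sigma\mapsto E[\otimes^k\MAT S]$ does exist, but its slots are scrambled across tensor positions by the Wick pairings, so it does not deliver the $\sum_l\MAT H_{(k,l)}(\cdot)\MAT H_{(k,l)}$ structure either.) The paper gets the slot decomposition legitimately by applying the Kronecker product rule to the \emph{random} matrix $\otimes^{k+1}\MAT S=\MAT S\otimes(\otimes^k\MAT S)$, where each tensor factor is a full copy of $\MAT S$; the per-slot kernel is then $\frac{\partial\MAT S}{\partial\MAT\Sigma^{-1}}$, computed from $\MAT S\stackrel{d}{=}\MAT\Sigma^{1/2}\MAT R\MAT\Sigma^{1/2}$ with $\MAT R$ independent of $\MAT\Sigma$, which forces an implicit differentiation of $\MAT\Sigma^{1/2}\MAT\Sigma^{1/2}\MAT\Sigma^{-1}=\MAT I_p$ to obtain $\frac{\partial\MAT\Sigma^{1/2}}{\partial\MAT\Sigma^{-1}}=-\frac{1}{2}(\MAT I_{p^2}+\MAT K_{pp})(\MAT\Sigma^{-1}\otimes\MAT\Sigma^{-1/2}+\MAT\Sigma^{-1/2}\otimes\MAT\Sigma^{-1})^{-1}$. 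This produces a genuinely $\MAT\Sigma$-dependent kernel $\MAT G_{\MAT\Sigma}$ (containing $(\MAT I_{p^2}+\MAT\Sigma^{1/2}\otimes\MAT\Sigma^{-1/2})^{-1}$), and the $\MAT\Sigma$-free block $\MAT G$ of the statement only appears after a final, separate step you do not mention: using the invariance $\MAT\Sigma^{-1/2}\MAT S\MAT\Sigma^{-1/2}\sim\mathcal{W}(n,\MAT I_p)$ to evaluate everything at $\MAT\Sigma=\MAT I_p$ and then conjugate by $\otimes^{2k}\MAT\Sigma^{1/2}$. Those two ingredients --- the derivative of the matrix square root and the reduction to $\MAT\Sigma=\MAT I_p$ by affine invariance --- are what your proposal is missing, and without them the claimed fixed block $\MAT G$ cannot be reached from your slot-wise shortcut.
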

\begin{proof}
    See Appendix~\ref{app:kronWproof}.
\end{proof}
\begin{proposition}[Kronecker moments]
    Let $\MAT S\sim\mathcal{EW}(n,\MAT \Sigma, h_{np})$.
    Then, for $k\in\mathbb{N}^*$ such that $m_k^{(h_{np})}$ defined in~\eqref{eq:moment_modular} is finite,
    \begin{equation}
        \vvec(E[\otimes^{k}\MAT S]) = \frac{\Gamma(\frac{np}{2})}{2^k\Gamma(\frac{np}{2}+k)} m_k^{(h_{np})} \left(\otimes^{2k}\MAT \Sigma^{1/2}\right) \prod_{l=0}^{k-1}\left[\MAT I_{p^{2l}}\otimes\MAT M_{(k-1-l)}\right] \left(\otimes^k \vvec(\MAT I_p)\right),
    \label{eq:kronEW}
    \end{equation}
    where $\MAT M_{(k)}$ is defined in Proposition~\ref{prop:kronW}.
\label{prop:kronEW}
\end{proposition}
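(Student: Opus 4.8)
The plan is to push the stochastic representation of Theorem~\ref{thm:stoc rep} through the $k$-fold Kronecker product, separate the scalar factor $\mathcal{Q}$ from the matrix factor, and reduce the matrix factor to the Wishart Kronecker moments of Proposition~\ref{prop:kronW}. First I would write $\MAT S\stackrel{d}{=}\mathcal{Q}\,\MAT\Sigma^{1/2}\MAT V\MAT\Sigma^{1/2}$ with $\MAT V=\MAT U\MAT U^\top\sim\mathcal{NW}(n,p)$, the variable $\mathcal{Q}$ independent of $\MAT V$ and with the density from Theorem~\ref{thm:stoc rep}. Since scalars factor out of Kronecker products, $\otimes^k\MAT S\stackrel{d}{=}\mathcal{Q}^k\,\otimes^k(\MAT\Sigma^{1/2}\MAT V\MAT\Sigma^{1/2})$. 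Taking expectations and using the independence of $\mathcal{Q}$ and $\MAT V$ — which is legitimate because $\MAT V$ is bounded and $E[\mathcal{Q}^k]=m_k^{(h_{np})}$ is assumed finite via~\eqref{eq:moment_modular} — gives $E[\otimes^k\MAT S]=m_k^{(h_{np})}\,E[\otimes^k(\MAT\Sigma^{1/2}\MAT V\MAT\Sigma^{1/2})]$.

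Next I would peel off the $\MAT\Sigma$-factors by vectorizing. The mixed-product rule gives $\otimes^k(\MAT\Sigma^{1/2}\MAT V\MAT\Sigma^{1/2})=(\otimes^k\MAT\Sigma^{1/2})(\otimes^k\MAT V)(\otimes^k\MAT\Sigma^{1/2})$, and then $\vvec(\MAT A\MAT B\MAT C)=(\MAT C^\top\otimes\MAT A)\vvec(\MAT B)$, together with the symmetry of $\MAT\Sigma^{1/2}$ (so $(\otimes^k\MAT\Sigma^{1/2})^\top=\otimes^k\MAT\Sigma^{1/2}$) and $(\otimes^k\MAT\Sigma^{1/2})\otimes(\otimes^k\MAT\Sigma^{1/2})=\otimes^{2k}\MAT\Sigma^{1/2}$, yields $\vvec(E[\otimes^k\MAT S])=m_k^{(h_{np})}\,(\otimes^{2k}\MAT\Sigma^{1/2})\,\vvec(E[\otimes^k\MAT V])$. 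It then only remains to identify $\vvec(E[\otimes^k\MAT V])$ for $\MAT V\sim\mathcal{NW}(n,p)$.

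For this I would reuse the device from the proof of Proposition~\ref{prop:uniform}: taking $\MAT S_0\sim\mathcal{W}(n,\MAT I_p)$, Theorem~\ref{thm:stoc rep} applied with the Gaussian generator gives $\MAT S_0=\mathcal{Q}_0\MAT V$ with $\mathcal{Q}_0\sim\chi^2_{np}$ independent of the \emph{same} $\MAT V$, hence $E[\otimes^k\MAT S_0]=E[\mathcal{Q}_0^k]\,E[\otimes^k\MAT V]$ with $E[\mathcal{Q}_0^k]=2^k\Gamma(\frac{np}{2}+k)/\Gamma(\frac{np}{2})$, so $E[\otimes^k\MAT V]=\frac{\Gamma(np/2)}{2^k\Gamma(np/2+k)}E[\otimes^k\MAT S_0]$. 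Finally I would invoke Proposition~\ref{prop:kronW} with $\MAT\Sigma=\MAT I_p$: there the prefactor $\otimes^{2k}\MAT\Sigma^{1/2}$ collapses to $\MAT I_{p^{2k}}$ while the matrices $\MAT M_{(j)}$ are built only from identity and commutation matrices, so $\vvec(E[\otimes^k\MAT S_0])=\prod_{l=0}^{k-1}[\MAT I_{p^{2l}}\otimes\MAT M_{(k-1-l)}](\otimes^k\vvec(\MAT I_p))$. Substituting this back through the two previous relations gives exactly~\eqref{eq:kronEW}.

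The only genuinely delicate point, and the one I would spell out carefully, is the identity $E[\otimes^k\MAT V]=\frac{\Gamma(np/2)}{2^k\Gamma(np/2+k)}E[\otimes^k\MAT S_0]$: it hinges on the fact that the uniform-on-the-sphere factor $\MAT U\MAT U^\top$ in the stochastic representation is the \emph{same} for $\mathcal{W}(n,\MAT I_p)$ and for $\mathcal{EW}(n,\MAT I_p,h_{np})$ — it does not depend on the generator — and on the independence of $\mathcal{Q}_0$ and $\MAT V$, which lets the expectation factor into finite pieces. Everything else is the standard Kronecker-product and vectorization bookkeeping already used in the proofs of Propositions~\ref{prop:expec} and~\ref{prop:variance}, made valid by the finiteness of $m_k^{(h_{np})}$ assumed in the statement.
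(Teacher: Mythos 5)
Your proposal is correct and follows essentially the same route as the paper: both use the stochastic representation $\MAT S\stackrel{d}{=}\mathcal{Q}\,\MAT\Sigma^{1/2}\MAT V\MAT\Sigma^{1/2}$, factor out $E[\mathcal{Q}^k]=m_k^{(h_{np})}$ by independence, and identify the remaining Normalized-Wishart Kronecker moment by specializing to the Gaussian generator (where $E[\mathcal{Q}^k]=2^k\Gamma(\frac{np}{2}+k)/\Gamma(\frac{np}{2})$) and comparing with Proposition~\ref{prop:kronW}. The only cosmetic difference is that you strip the $\otimes^{2k}\MAT\Sigma^{1/2}$ factor before invoking the Wishart case at $\MAT\Sigma=\MAT I_p$, whereas the paper keeps $\MAT\Sigma^{1/2}\MAT V\MAT\Sigma^{1/2}$ intact and injects the general-$\MAT\Sigma$ formula directly; the delicate point you flag -- that $\MAT V$ does not depend on the density generator -- is indeed the crux of the argument in both versions.
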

\begin{proof}
    From Theorem~\ref{thm:stoc rep}, one has $\MAT S = \mathcal{Q} \MAT\Sigma^{1/2}\MAT V\MAT \Sigma^{1/2}$ with $\MAT V \stackrel{d}{=} \MAT U \MAT U^\top$.
    Consequently, we get $\otimes^k\MAT S = \mathcal{Q}^k \otimes^k(\MAT \Sigma^{1/2}\MAT V \MAT\Sigma^{1/2})$, and
    \begin{equation}
        \vvec(E[\otimes^k\MAT S])
        = E[\mathcal{Q}^k] \vvec(E[\otimes^k(\MAT \Sigma^{1/2}\MAT V \MAT\Sigma^{1/2})])
        = m_k^{(h_{np})}\vvec(E[\otimes^k(\MAT \Sigma^{1/2}\MAT V \MAT\Sigma^{1/2})]).
        \label{eq:kronEWbis}
    \end{equation}
    It remains to determine $\vvec(E[\otimes^k(\MAT \Sigma^{1/2}\MAT V \MAT\Sigma^{1/2})])$.
    To do so, we leverage the fact that $\vvec(E[\otimes^k\MAT S])$ is known in the Wishart case.
    Indeed, in this specific case, using $m_k^{(h_{np})}=2^k\frac{\Gamma(\frac{np}{2}+k)}{\Gamma(\frac{np}{2})}$ and injecting~\eqref{eq:kronEW} from Proposition~\ref{prop:kronW} in~\eqref{eq:kronEWbis} yields
    \begin{equation*}
        \left(\otimes^{2k}\MAT \Sigma^{1/2}\right) \prod_{l=0}^{k-1}\left[\MAT I_{p^{2l}}\otimes\MAT M_{(k-1-l)}\right] \left(\otimes^k \vvec(\MAT I_p)\right)
        =
        2^k\frac{\Gamma(\frac{np}{2}+k)}{\Gamma(\frac{np}{2})} \vvec(E[\otimes^k(\MAT \Sigma^{1/2}\MAT V \MAT\Sigma^{1/2}]).
    \end{equation*}
    This is enough to conclude.
\end{proof}

\subsection{Inverse Elliptical Wishart distribution}
\label{subsec:eiw new}

After studying the Elliptical Wishart distribution, we turn to the Inverse Elliptical Wishart distribution.
As in the previous section, the stochastic representation from Theorem~\ref{thm:stoc rep} along with the statistical properties of the Normalized Wishart distribution from Section~\ref{subsec:normW} are exploited.
First, Proposition~\ref{prop:uniform} is exploited to derive the expectation and variance of the Inverse Elliptical Wishart distribution $\mathcal{EW}^{-1}(n,\MAT \Sigma,h_{np})$.
These are in Propositions~\ref{prop:esp inv} and~\ref{prop:var inv}.
\begin{proposition}[Expectation]
    Suppose that $n>p+1$ and moment $m_{-1}^{(h_{np})}$ from~\eqref{eq:moment_modular} is finite.
    The expectation of $\MAT S\sim\mathcal{EW}^{-1}(n,\MAT\Sigma,h_{np})$ is 
    \begin{equation}
       E[\MAT S] = d_{n,p}\MAT\Sigma,
    \label{eq:esp inv}
    \end{equation}
    where $d_{n,p}=\frac{np-2}{n-p-1} m_{-1}^{(h_{np})} $.
\label{prop:esp inv}
\end{proposition}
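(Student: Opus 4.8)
The plan is to invert the stochastic representation of Theorem~\ref{thm:stoc rep} and then split the resulting expectation using independence, exactly mirroring the proof of Proposition~\ref{prop:expec} but with $\mathcal{Q}^{-1}$ and $(\MAT U\MAT U^\top)^{-1}$ in place of $\mathcal{Q}$ and $\MAT U\MAT U^\top$.

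First I would use Definition~\ref{def:defEIW}: since $\MAT S\sim\mathcal{EW}^{-1}(n,\MAT\Sigma,h_{np})$, the matrix $\MAT S^{-1}$ follows $\mathcal{EW}(n,\MAT\Sigma^{-1},h_{np})$. Applying Theorem~\ref{thm:stoc rep} to $\MAT S^{-1}$ gives $\MAT S^{-1}\stackrel{d}{=}\mathcal{Q}\,\MAT\Sigma^{-1/2}(\MAT U\MAT U^\top)\MAT\Sigma^{-1/2}$, where $\mathcal{Q}$ is a non-negative random variable with density $t\mapsto\frac{\pi^{np/2}}{\Gamma(np/2)}h_{np}(t)t^{np/2-1}$, $\MAT U$ is uniformly distributed on $\mathcal{C}_{p,n}$, and $\mathcal{Q}$ and $\MAT U$ are independent. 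Because $n\geq p$, we have $\mathcal{Q}>0$ almost surely and, by point~3 of Proposition~\ref{prop:uniform}, $\MAT U\MAT U^\top$ is invertible almost surely; hence we may invert the identity above to obtain $\MAT S\stackrel{d}{=}\mathcal{Q}^{-1}\,\MAT\Sigma^{1/2}(\MAT U\MAT U^\top)^{-1}\MAT\Sigma^{1/2}$, using $(\MAT\Sigma^{-1/2})^{-1}=\MAT\Sigma^{1/2}$ and the reversal of factors under inversion.

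Then, by independence of $\mathcal{Q}$ and $\MAT U$, $E[\MAT S]=E[\mathcal{Q}^{-1}]\,\MAT\Sigma^{1/2}E[(\MAT U\MAT U^\top)^{-1}]\MAT\Sigma^{1/2}$. From~\eqref{eq:moment_modular} with $k=-1$ we get $E[\mathcal{Q}^{-1}]=m_{-1}^{(h_{np})}$, which is finite by hypothesis. From point~4 of Proposition~\ref{prop:uniform}, valid precisely because $n>p+1$, we have $E[(\MAT U\MAT U^\top)^{-1}]=\frac{np-2}{n-p-1}\MAT I_p$. Substituting yields $E[\MAT S]=\frac{np-2}{n-p-1}m_{-1}^{(h_{np})}\,\MAT\Sigma=d_{n,p}\MAT\Sigma$.

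The computation is short; the only point requiring care is the existence of the two moments involved. The hypothesis that $m_{-1}^{(h_{np})}$ is finite is exactly what guarantees $E[\mathcal{Q}^{-1}]<\infty$, and the assumption $n>p+1$ is exactly the condition inherited from the Inverse Wishart expectation~\eqref{eq:expecIW} that makes $E[(\MAT U\MAT U^\top)^{-1}]$ well-defined in Proposition~\ref{prop:uniform}. Once both are in place, Fubini/independence applies and the rest is a direct substitution.
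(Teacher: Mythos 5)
Your proposal is correct and follows essentially the same route as the paper: the paper's proof likewise starts from the inverted stochastic representation $\MAT S \stackrel{d}{=} \mathcal{Q}^{-1}\MAT\Sigma^{1/2}(\MAT U\MAT U^\top)^{-1}\MAT\Sigma^{1/2}$, splits the expectation by independence of $\mathcal{Q}$ and $\MAT U$, and concludes with point 4 of Proposition~\ref{prop:uniform}. The only difference is that you spell out the derivation of that representation from Definition~\ref{def:defEIW} and justify the almost-sure invertibility, which the paper leaves implicit.
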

\begin{proof}
    From Theorem~\ref{thm:stoc rep}, $\MAT S = \frac{1}{\mathcal{Q}}\MAT\Sigma^{1/2}(\MAT U\MAT U^\top)^{-1}\MAT\Sigma^{1/2}$.
    Since $\MAT U$ and $\mathcal{Q}$ are independent, $$E[\MAT S]= E\left[\frac{1}{\mathcal{Q}}\right]\MAT\Sigma^{1/2}E[(\MAT U\MAT U^\top)^{-1}]\MAT\Sigma^{1/2}.$$
    The result is then obtained using point 4. from Proposition~\ref{prop:uniform}.
\end{proof}
\begin{proposition}[Variance]
    Suppose that $n>p+3$ and moments $m_{-1}^{(h_{np})}$ and $m_{-2}^{(h_{np})}$ are finite.
    The variance of $\MAT S\sim\mathcal{EW}^{-1}(n,\MAT\Sigma,h_{np})$ is
    \begin{equation}
        \var[\MAT S] =  e_{n,p} (\MAT I_{p^2}+\MAT K_{p,p})(\MAT\Sigma\otimes\MAT\Sigma) + (n-p-2)f_{n,p} \vvec(\MAT\Sigma)\vvec(\MAT\Sigma)^\top,
       \label{eq:var inv}
    \end{equation}
    where $e_{n,p}=\frac{(np-2)(np-4)}{(n-p)(n-p-1)(n-p-3)} m_{-2}^{(h_{np})}$ and $f_{n,p}=e_{n,p} - \frac{1}{n-p-2}\left(\frac{np-2}{n-p-1} m_{-1}^{(h_{np})}\right)^2$.
\label{prop:var inv}
\end{proposition}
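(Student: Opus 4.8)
The plan is to mirror the proofs of Propositions~\ref{prop:expec} and~\ref{prop:variance} for the Elliptical Wishart case, but now using the inverse stochastic representation together with points 4.\ and 5.\ of Proposition~\ref{prop:uniform}. From Theorem~\ref{thm:stoc rep}, $\MAT S\stackrel{d}{=}\mathcal{Q}\,\MAT\Sigma^{1/2}(\MAT U\MAT U^\top)\MAT\Sigma^{1/2}$, so for the Inverse Elliptical Wishart matrix we have $\MAT S^{-1}\stackrel{d}{=}\mathcal{Q}\,\MAT\Sigma^{-1/2}(\MAT U\MAT U^\top)\MAT\Sigma^{-1/2}$, hence $\MAT S\stackrel{d}{=}\tfrac{1}{\mathcal{Q}}\MAT\Sigma^{1/2}(\MAT U\MAT U^\top)^{-1}\MAT\Sigma^{1/2}$ with $\MAT U\MAT U^\top\sim\mathcal{NW}(n,p)$ and $\mathcal{Q}$ independent of $\MAT U$ (note $\mathcal{Q}$ here is the second-order modular of $h_{np}$, not $\chi^2_{np}$). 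Writing $\MAT V=\MAT U\MAT U^\top$, vectorizing gives $\vvec(\MAT S)=\tfrac{1}{\mathcal{Q}}(\MAT\Sigma^{1/2}\otimes\MAT\Sigma^{1/2})\vvec(\MAT V^{-1})$ via the identity $(\MAT C^\top\otimes\MAT A)\vvec(\MAT B)=\vvec(\MAT A\MAT B\MAT C)$ and symmetry of $\MAT\Sigma^{1/2}$.

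First I would compute the second moment: by independence,
\begin{equation*}
E[\vvec(\MAT S)\vvec(\MAT S)^\top] = E[\mathcal{Q}^{-2}](\MAT\Sigma^{1/2}\otimes\MAT\Sigma^{1/2})\,E[\vvec(\MAT V^{-1})\vvec(\MAT V^{-1})^\top]\,(\MAT\Sigma^{1/2}\otimes\MAT\Sigma^{1/2}).
\end{equation*}
Here $E[\mathcal{Q}^{-2}]=m_{-2}^{(h_{np})}$ by~\eqref{eq:moment_modular}, and $E[\vvec(\MAT V^{-1})\vvec(\MAT V^{-1})^\top]$ is given by point 5.\ of Proposition~\ref{prop:uniform}. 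Then I would conjugate the three terms of that expression by $\MAT\Sigma^{1/2}\otimes\MAT\Sigma^{1/2}$: the term $(\MAT I_{p^2}+\MAT K_{p,p})$ becomes $(\MAT I_{p^2}+\MAT K_{p,p})(\MAT\Sigma\otimes\MAT\Sigma)$ using $\MAT K_{p,p}(\MAT A\otimes\MAT B)=(\MAT B\otimes\MAT A)\MAT K_{p,p}$ and $(\MAT\Sigma^{1/2}\otimes\MAT\Sigma^{1/2})^2=\MAT\Sigma\otimes\MAT\Sigma$, while the rank-one term $\vvec(\MAT I_p)\vvec(\MAT I_p)^\top$ becomes $\vvec(\MAT\Sigma)\vvec(\MAT\Sigma)^\top$ since $(\MAT\Sigma^{1/2}\otimes\MAT\Sigma^{1/2})\vvec(\MAT I_p)=\vvec(\MAT\Sigma^{1/2}\MAT I_p\MAT\Sigma^{1/2})=\vvec(\MAT\Sigma)$. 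This yields $E[\vvec(\MAT S)\vvec(\MAT S)^\top]=e_{n,p}\big[(\MAT I_{p^2}+\MAT K_{p,p})(\MAT\Sigma\otimes\MAT\Sigma)+(n-p-2)\vvec(\MAT\Sigma)\vvec(\MAT\Sigma)^\top\big]$ with $e_{n,p}$ as stated.

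Next I would subtract the outer product of the mean. From Proposition~\ref{prop:esp inv}, $E[\MAT S]=d_{n,p}\MAT\Sigma$ with $d_{n,p}=\tfrac{np-2}{n-p-1}m_{-1}^{(h_{np})}$, so $\vvec(E[\MAT S])\vvec(E[\MAT S])^\top=d_{n,p}^2\,\vvec(\MAT\Sigma)\vvec(\MAT\Sigma)^\top$. Subtracting, the $(\MAT I_{p^2}+\MAT K_{p,p})(\MAT\Sigma\otimes\MAT\Sigma)$ coefficient stays $e_{n,p}$, and the rank-one coefficient becomes $(n-p-2)e_{n,p}-d_{n,p}^2=(n-p-2)\big(e_{n,p}-\tfrac{1}{n-p-2}d_{n,p}^2\big)=(n-p-2)f_{n,p}$, matching~\eqref{eq:var inv}. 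The existence hypotheses $n>p+3$ (so that the $\mathcal{NW}$ inverse second moment exists, as in point 5.) and finiteness of $m_{-1}^{(h_{np})}, m_{-2}^{(h_{np})}$ guarantee all quantities are well-defined. The only mild obstacle is carefully bookkeeping the Kronecker/commutation-matrix algebra when conjugating by $\MAT\Sigma^{1/2}\otimes\MAT\Sigma^{1/2}$ — in particular verifying $\MAT K_{p,p}(\MAT\Sigma^{1/2}\otimes\MAT\Sigma^{1/2})=(\MAT\Sigma^{1/2}\otimes\MAT\Sigma^{1/2})\MAT K_{p,p}$ so that the symmetric part $(\MAT I_{p^2}+\MAT K_{p,p})$ correctly commutes through — but this is routine and identical to what is already used in the proof of Proposition~\ref{prop:variance}.
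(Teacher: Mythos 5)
Your proposal is correct and follows essentially the same route as the paper's proof: the stochastic representation from Theorem~\ref{thm:stoc rep} applied to $\MAT S^{-1}$, vectorization, independence of $\mathcal{Q}$ and $\MAT U$, and point 5.\ of Proposition~\ref{prop:uniform}, then subtraction of the squared mean from Proposition~\ref{prop:esp inv}. The paper's proof is simply terser; your extra bookkeeping of the conjugation by $\MAT\Sigma^{1/2}\otimes\MAT\Sigma^{1/2}$ and the final coefficient algebra is all correct.
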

\begin{proof}
    From Theorem~\ref{thm:stoc rep}, $\MAT S = \frac{1}{\mathcal{Q}}\Sigma^{1/2}(\MAT U\MAT U^\top)^{-1}\Sigma^{1/2}$.
    Therefore, $\vvec(\MAT S)= \frac{1}{\mathcal{Q}} (\MAT\Sigma^{1/2}\otimes \MAT\Sigma^{1/2}) \vvec((\MAT U\MAT U^\top)^{-1})$.
    Since $\MAT U$ and $\mathcal{Q}$ are independent,
    \begin{equation}
        E[\vvec(\MAT S)\vvec(\MAT S)^\top]= E\left[\frac{1}{\mathcal{Q}^2}\right](\MAT\Sigma^{1/2}\otimes \MAT\Sigma^{1/2}) E[\vvec((\MAT U\MAT U^\top)^{-1})\vvec((\MAT U\MAT U^\top)^{-1})^\top](\MAT\Sigma^{1/2}\otimes \MAT\Sigma^{1/2}).
    \end{equation}
    Point 5. of Proposition~\ref{prop:uniform} concludes the proof.
\end{proof}

As for the Inverse Wishart distribution, the characteristic function of the Inverse Elliptical Wishart distribution appears quite complicated to obtain and remains unknown.
The last statistical property of the Inverse Elliptical Wishart distribution $\mathcal{EW}^{-1}(n,\MAT\Sigma,h_{np})$ that is derived is the Kronecker moments.
These are provided in Proposition~\ref{prop:kronIEW}.
\begin{proposition}[Kronecker moments] 
    Let $\MAT S \sim \mathcal{EW}^{-1}(n,\MAT \Sigma,h_{np})$.
    For $k< \frac{np}{2}-1$ such that $m_{-k-1}^{(h_{np})}$ defined in~\eqref{eq:moment_modular} is finite, we have
    \begin{equation}
        \vvec(E[\otimes^{k+1}\MAT S]) =   \frac{2^{k+1}\Gamma(\frac{np}{2})}{(n-p-1)\Gamma(\frac{np}{2}-k-1)}\ m_{-k-1}^{(h_{np})} \prod_{i=0}^{k-1}(\MAT I_{p^{2i}}\otimes \MAT A(k-i))(\otimes^{k+1}\vvec(\MAT \Sigma)),
    \end{equation}
    where $\MAT A(k)$ is defined in~\eqref{eq:KronIW}.
\label{prop:kronIEW}
\end{proposition}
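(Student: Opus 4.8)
The plan is to follow exactly the route used for the Elliptical Wishart Kronecker moments in Proposition~\ref{prop:kronEW}: use the stochastic representation of Theorem~\ref{thm:stoc rep} to isolate the modular variable, which reduces the computation to a Normalized Wishart quantity, and then pin that quantity down by calibrating against the Gaussian (i.e. Inverse Wishart) case, whose Kronecker moments are given in closed form by~\eqref{eq:KronIW}.

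First I would record the inverse stochastic representation. Since $\MAT S\sim\mathcal{EW}^{-1}(n,\MAT\Sigma,h_{np})$ means $\MAT S^{-1}\sim\mathcal{EW}(n,\MAT\Sigma^{-1},h_{np})$, Theorem~\ref{thm:stoc rep} (exactly as already used in the proofs of Propositions~\ref{prop:esp inv} and~\ref{prop:var inv}) gives $\MAT S\stackrel{d}{=}\mathcal{Q}^{-1}\MAT\Sigma^{1/2}\MAT V^{-1}\MAT\Sigma^{1/2}$, where $\MAT V=\MAT U\MAT U^\top\sim\mathcal{NW}(n,p)$ is invertible with probability one (point~3 of Proposition~\ref{prop:uniform}), $\mathcal{Q}$ has the modular density stated in the proposition, and $\MAT V$ and $\mathcal{Q}$ are independent. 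Taking the $(k+1)$-th Kronecker power, using $\otimes^{k+1}(\alpha\MAT A)=\alpha^{k+1}\,\otimes^{k+1}\MAT A$ together with the mixed-product rule $\otimes^{k+1}(\MAT A\MAT B\MAT C)=(\otimes^{k+1}\MAT A)(\otimes^{k+1}\MAT B)(\otimes^{k+1}\MAT C)$, and then invoking independence, I obtain
\[
\vvec\big(E[\otimes^{k+1}\MAT S]\big)=E[\mathcal{Q}^{-(k+1)}]\,\vvec\big(E[\otimes^{k+1}(\MAT\Sigma^{1/2}\MAT V^{-1}\MAT\Sigma^{1/2})]\big)=m_{-k-1}^{(h_{np})}\,\vvec\big(E[\otimes^{k+1}(\MAT\Sigma^{1/2}\MAT V^{-1}\MAT\Sigma^{1/2})]\big),
\]
the last equality being the definition~\eqref{eq:moment_modular} of $m_{-k-1}^{(h_{np})}$, which is where its finiteness is used.

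It then remains to identify the Normalized Wishart quantity $\vvec(E[\otimes^{k+1}(\MAT\Sigma^{1/2}\MAT V^{-1}\MAT\Sigma^{1/2})])$, and the key observation is that $\MAT V=\MAT U\MAT U^\top$ has the same $\mathcal{NW}(n,p)$ distribution whatever density generator is used (the footnote in Section~\ref{subsec:normW}), so this quantity can be read off from the Gaussian case. Applying the same decomposition to $\MAT T\sim\mathcal{W}^{-1}(n,\MAT\Sigma)$ -- for which $\MAT T^{-1}\sim\mathcal{W}(n,\MAT\Sigma^{-1})$ by Definition~\ref{def:defIW} and Theorem~\ref{thm:stoc rep} gives $\MAT T\stackrel{d}{=}\mathcal{Q}_0^{-1}\MAT\Sigma^{1/2}\MAT V^{-1}\MAT\Sigma^{1/2}$ with $\mathcal{Q}_0\sim\chi^2_{np}$ and the same $\MAT V$ -- one gets $\vvec(E[\otimes^{k+1}\MAT T])=E[\mathcal{Q}_0^{-(k+1)}]\,\vvec(E[\otimes^{k+1}(\MAT\Sigma^{1/2}\MAT V^{-1}\MAT\Sigma^{1/2})])$. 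The $\chi^2_{np}$ inverse moment is $E[\mathcal{Q}_0^{-(k+1)}]=\frac{\Gamma((np/2)-k-1)}{2^{k+1}\Gamma(np/2)}$, finite precisely because $k<\frac{np}{2}-1$ (the very computation that yields $E[\mathcal{Q}^{-1}]=1/(np-2)$ and $E[\mathcal{Q}^{-2}]=1/((np-2)(np-4))$ in the proof of Proposition~\ref{prop:uniform}). Solving this relation for $\vvec(E[\otimes^{k+1}(\MAT\Sigma^{1/2}\MAT V^{-1}\MAT\Sigma^{1/2})])$, substituting the closed form~\eqref{eq:KronIW} for $\vvec(E[\otimes^{k+1}\MAT T])$, and plugging the result back into the displayed identity produces exactly the claimed expression.

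All of this is routine once the decomposition is set up; the part requiring care is the bookkeeping of existence conditions. I must check that $m_{-k-1}^{(h_{np})}$ finite indeed gives $E[\mathcal{Q}^{-(k+1)}]<\infty$, that $k<\frac{np}{2}-1$ gives $E[\mathcal{Q}_0^{-(k+1)}]<\infty$, and -- implicitly -- that the Inverse Wishart Kronecker moment formula~\eqref{eq:KronIW} is available, i.e. that $n>p+2k+1$; the latter is in fact stronger than $k<\frac{np}{2}-1$, so the calibration is legitimate throughout the range where the right-hand side is meaningful. The remaining, purely algebraic, verifications -- that the scalar modular factors pull out of the Kronecker powers, that $\otimes^{k+1}(\MAT\Sigma^{1/2}\MAT V^{-1}\MAT\Sigma^{1/2})$ factors through the mixed-product property, and that $\vvec$ distributes over the scalar -- are immediate and need no further comment.
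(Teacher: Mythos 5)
Your proposal is correct and follows essentially the same route as the paper's proof: isolate $E[\mathcal{Q}^{-(k+1)}]=m_{-k-1}^{(h_{np})}$ via the stochastic representation of Theorem~\ref{thm:stoc rep}, then determine $\vvec\bigl(E[\otimes^{k+1}(\MAT\Sigma^{1/2}\MAT V^{-1}\MAT\Sigma^{1/2})]\bigr)$ by calibrating against the Inverse Wishart case through~\eqref{eq:KronIW} and the inverse $\chi^2_{np}$ moment $E[\mathcal{Q}_0^{-(k+1)}]=2^{-k-1}\Gamma(\tfrac{np}{2}-k-1)/\Gamma(\tfrac{np}{2})$. Your explicit remark that the calibration additionally relies on $n>p+2k+1$ for~\eqref{eq:KronIW} to be available is in fact a point the paper's own proof leaves implicit.
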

\begin{proof}
    From Theorem~\ref{thm:stoc rep}, $\MAT S = \frac{1}{\mathcal{Q}} \MAT\Sigma^{1/2}\MAT V^{-1}\MAT \Sigma^{1/2}$ with $\MAT V \stackrel{d}{=} \MAT U \MAT U^\top$.
    Consequently, $\otimes^{k+1}\MAT S = \frac{1}{\mathcal{Q}^{k+1}} \otimes^{k+1}(\MAT \Sigma^{1/2}\MAT V^{-1} \MAT\Sigma^{1/2})$ and
    \begin{equation}
        \vvec(E[\otimes^{k+1}\MAT S])
        = E[\mathcal{Q}^{-k-1}] \vvec(E[\otimes^{k+1}(\MAT \Sigma^{1/2}\MAT V^{-1} \MAT\Sigma^{1/2}])
        = m_{-k-1}^{(h_{np})}\vvec(E[\otimes^{k+1}(\MAT \Sigma^{1/2}\MAT V^{-1} \MAT\Sigma^{1/2}]).
        \label{eq:kronEIWbis}
    \end{equation}    
    It remains to obtain $\vvec(E[\otimes^{k+1}(\MAT \Sigma^{1/2}\MAT V^{-1} \MAT\Sigma^{1/2})])$.
    Similarly to the proof of Proposition~\ref{prop:kronEW}, we exploit the fact that $\vvec(E[\otimes^{k+1}\MAT S])$ is known in the Inverse Wishart case.
    In this specific case, using $E[\mathcal{Q}^{-k-1}]=2^{-k-1}\frac{\Gamma\left(\frac{np}{2}-k-1\right)}{\Gamma\left(\frac{np}{2}\right)}$ and injecting~\eqref{eq:KronIW} in~\eqref{eq:kronEIWbis} yields
    \begin{equation*}
        \frac{1}{n-p-1} \prod_{i=0}^{k-1}(\MAT I_{p^{2i}}\otimes \MAT A(k-i))(\otimes^{k+1}\vvec(\MAT \Sigma))
        =
        2^{-k-1}\frac{\Gamma\left(\frac{np}{2}-k-1\right)}{\Gamma\left(\frac{np}{2}\right)}
        \vvec(E[\otimes^{k+1}(\MAT \Sigma^{1/2}\MAT V^{-1} \MAT\Sigma^{1/2})]).
    \end{equation*}
    This is enough to conclude.
\end{proof}

\subsection{Examples with particular Elliptical Wishart distributions}
\label{subsec:examples}
Finally, this section is dedicated to some special cases of the Elliptical Wishart and Inverse Elliptical Wishart distributions.
We deal with the Gaussian, $t-$, Generalized Gaussian, and Kotz density generators.
In each case, to be able to apply results from Sections~\ref{subsec:eW new} and~\ref{subsec:eiw new}, explicit values of moments $m_k^{(h_{np})}$ for $k \in \{-2,-1,1,2\}$ are provided.
For further details on these moments, see \emph{e.g.},~\cite{kotz1975multivariate}.
Here, we use them to compute the expectations and variances through the explicit formulas of $a_{n,p},\  b_{n,p},\  c_{n,p},\  d_{n,p},\ e_{n,p}$ and $f_{n,p}$, introduced in Propositions~\ref{prop:expec}, \ref{prop:variance}, \ref{prop:esp inv} and~\ref{prop:var inv}, respectively.

\subsubsection{Gaussian density generator}
For the usual Wishart and Inverse Wishart distributions, the density generator is $h_{np}:t \mapsto (2\pi)^{-np/2}\exp\left(-\frac{t}{2}\right)$.
As mentioned previously, with the notations of Theorem \eqref{thm:stoc rep}, we have $\mathcal{Q}\sim \chi^2_{np}$ and,
\begin{equation}
    m_k^{(h_{np})} = 2^k\frac{\Gamma\left(\frac{np}{2}+k\right)}{\Gamma\left(\frac{np}{2}\right)}.
\end{equation}
This yields $a_{n,p} = n$, $b_{n,p}= 1$, $c_{n,p}=0$, $d_{n,p}=\frac{1}{n-p-1}$, $e_{n,p}=\frac{1}{(n-p)(n-p-1)(n-p-3)}$ and $f_{n,p}=\frac{2}{(n-p)(n-p-1)^2(n-p-2)(n-p-3)}$.
By plugging them in Equations~\eqref{eq:esp}, \eqref{eq:var}, \eqref{eq:esp inv} and~\eqref{eq:var inv}, the expectations~\eqref{eq:expecW} and~\eqref{eq:expecIW}, and variances~\eqref{eq:varW} and~\eqref{eq:varIW} of the Wishart and Inverse Wishart distributions, respectively.
         
\subsubsection{$t$- density generator}
The $t$- density generator $h_{np}:t \mapsto (\pi\nu)^{-np/2} \frac{\Gamma(\frac{\nu+np}{2})}{\Gamma(\frac{\nu}{2})} \left(1+\frac{t}{\nu}\right)^{-\frac{\nu+np}{2}}$, which is characterized by its degree of freedom $\nu>0$, yields the $t$-Wishart distribution.
With the notations of Theorem~\ref{thm:stoc rep}, we recognize that $\frac{\mathcal{Q}}{np}\sim\mathcal{F}(np,\nu)$, the $F$- distribution with $np$ and $\nu$ degrees of freedom~\cite{patnaik1949non}.
Recall that the $k$-th moment of an $F$- random variable is
\begin{equation}
    m_k^{(h_{np})}= \nu^{k}\  \frac{\Gamma\left(\frac{np}{2}+k\right)}{\Gamma\left(\frac{np}{2}\right)}\  \frac{\Gamma\left(\frac{\nu}{2}-k\right)}{\Gamma\left(\frac{\nu}{2}\right)}, \quad -\frac{np}{2}<k<\frac{\nu}{2}.
\end{equation}
Notice that $m_{-1}^{(h_{np})}$ of the $t$- distribution is the same as in the Gaussian case.
Consequently, the Inverse Wishart and Inverse $t$-Wishart share the same expectation.
Moreover, as expected, when $\nu\to+\infty$, moments $m_k^{(h_{np})}$ of the the $t$- distribution converges to those of the Normal distribution.
This simply arises from the fact that the density generator $h_{np}$ of the $t$- distribution converges point-wisely to the density generator of the Normal distribution and from the monotone convergence theorem.
Furthermore, we point out that for the $t$-Wishart distribution, the characteristic function does not admit a series expansion like in Equation \eqref{eq:charEW} since the moments $m_k^{(h_{np})}$ are undefined for $k> \frac{\nu}{2}$.
Finally, or the expectations and variances, we obtain 
\begin{equation}
\label{eq:moments_tW}
    \begin{array}{ll}
         & a_{n,p}= \frac{\nu}{\nu-2} n, \text{  for }\nu>2;\\
         &b_{n,p}=\frac{\nu^2}{(\nu-2)(\nu-4)} \quad; \quad c_{n,p}= \frac{2\nu^2}{(\nu-2)^2(\nu-4)} , \text{  for }\nu>4;\\
         &d_{n,p}=\frac{1}{n-p-1} ,\text{  for }n>p+1; \\
         &e_{n,p}=\frac{1}{(n-p)(n-p-1)(n-p-3)}\left(1+\frac{2}{\nu}\right), \text{  for }n>p+3;\\
         &f_{n,p}=\frac{2}{(n-p)(n-p-1)^2(n-p-2)(n-p-3)}\left(1+\frac{(n-p-1)(n-p-2)}{\nu}\right), \text{  for }n>p+3.
    \end{array}
\end{equation}
\vspace{1pt}
\subsubsection{Generalized Gaussian density generator}
The density generator of the Generalized Gaussian distribution, also called Power Exponential distribution, is defined as $h_{np}(t) = \beta\left(2^{1/\beta}\pi\right)^{-np/2}\exp(-t^\beta/2)$, characterized by the shape parameter $\beta>0$.
When $0<\beta<1$, it features heavier tail regions than the normal distribution and hence can be useful in providing robustness against outliers \cite{lindsey1999multivariate}.
With the notations of Theorem \eqref{thm:stoc rep}, $\mathcal{Q}^{\beta}\sim \chi^2_{\frac{np}{\beta}}$, the Chi-squared distribution with $\frac{np}{\beta}$ degrees of freedom (not necessarily an integer), which corresponds to the Gamma distribution $\Gamma\left(\frac{np}{2\beta},2\right)$.
From~\cite{thom1958note}, the moments $m_k^{(h_{np})}$ are
\begin{equation}
    m_k^{(h_{np})} = 2^{\frac{k}{\beta}} \frac{\Gamma\left(\frac{np}{2\beta}+\frac{k}{\beta}\right)}{\Gamma\left(\frac{np}{2\beta}\right)}.
\end{equation}
For the expectations and variances, we obtain 
\begin{equation}
\label{eq:moments_ggW}
    \begin{array}{ll}
         & a_{n,p}= 2^{\frac{1}{\beta}}\frac{1}{p} \frac{\Gamma\left(\frac{np}{2\beta}+\frac{1}{\beta}\right)}{\Gamma\left(\frac{np}{2\beta}\right)};\\
         &b_{n,p}=2^{\frac{2}{\beta}}\frac{1}{np(np+2)} \frac{\Gamma\left(\frac{np}{2\beta}+\frac{2}{\beta}\right)}{\Gamma\left(\frac{np}{2\beta}\right)}\quad; \quad c_{n,p}= b_{n,p} \left( 1 - \left(1+\frac{2}{np}\right) \frac{\Gamma\left(\frac{np}{2\beta}+\frac{1}{\beta}\right)^2}{\Gamma\left(\frac{np}{2\beta}+\frac{2}{\beta}\right)\Gamma\left(\frac{np}{2\beta}\right)}\right);\\
         &d_{n,p}= 2^{-\frac{1}{\beta}}\frac{np-2}{n-p-1} \frac{\Gamma(\frac{np}{2\beta}-\frac{1}{\beta})}{\Gamma\left(\frac{np}{2\beta}\right)}; \\
         &e_{n,p}=2^{-\frac{2}{\beta}}\frac{(np-2)(np-4)}{(n-p)(n-p-1)(n-p-3)}\frac{\Gamma\left(\frac{np}{2\beta}-\frac{2}{\beta}\right)}{\Gamma\left(\frac{np}{2\beta}\right)};\\
         &f_{n,p}=e_{n,p}\left(1 - \frac{(n-p)(np-3)(np-2)}{(n-p-1)(n-p-2)(np-4)} \frac{\Gamma\left(\frac{np}{2\beta}-\frac{1}{\beta}\right)^2}{\Gamma\left(\frac{np}{2\beta}-\frac{2}{\beta}\right)\Gamma\left(\frac{np}{2\beta}\right)}\right).
    \end{array}
\end{equation}

\subsubsection{Kotz density generator}
The Kotz Wishart distribution $\mathcal{K}\mathcal{W}(n,\MAT\Sigma,\alpha,\beta,R)$ with parameters $\alpha$, $\beta$ and $R$ has as density generator
\begin{equation*}
    h_{np}(t) = \beta \ \pi^{-np/2}\ R^{\frac{np+2(\alpha-1)}{2\beta}}  \frac{\Gamma(\frac{np}{2})}{\Gamma(\frac{np}{2\beta}+\frac{\alpha-1}{\beta})}\ t^{\alpha-1} \exp(-R t^\beta),
\end{equation*}
where $\alpha+\frac{np}{2}>1$ and $R>0$.
Kotz model is a generalization of Gaussian and Generalized Gaussian distributions.
When $\alpha=1$ and $R=\frac{1}{2}$, we get the Generalized Gaussian with parameter $\beta$.
If, in addition, $\beta=1$, we obtain the Gaussian distribution. 
With the notations of Theorem \eqref{thm:stoc rep}, $\mathcal{Q}\sim \Gamma\left(R^{-1/\beta},\alpha-1+\frac{np}{2},\beta\right)$, the generalized Gamma distribution.
From~\cite{stacy1962generalization}, it follows that, for $k>-\frac{np}{2}-\alpha+1$,
\begin{equation}
    m_k^{(h_{np})} = R^{-\frac{k}{\beta}}\  \frac{\Gamma(\frac{np}{2\beta}+\frac{\alpha+k-1}{\beta})}{\Gamma(\frac{np}{2\beta}+\frac{\alpha-1}{\beta})}.
\end{equation}
We thus obtain 
\begin{equation}
\label{eq:moments_KW}
    \begin{array}{ll}
         & a_{n,p}= R^{\frac{1}{\beta}}\frac{1}{p} \frac{\Gamma\left(\frac{np}{2\beta}+\frac{\alpha}{\beta}\right)}{\Gamma\left(\frac{np}{2\beta}+\frac{\alpha-1}{\beta}\right)};\\
         &b_{n,p}=R^{\frac{2}{\beta}}\frac{1}{np(np+2)} \frac{\Gamma\left(\frac{np}{2\beta}+\frac{\alpha+1}{\beta}\right)}{\Gamma\left(\frac{np}{2\beta}+\frac{\alpha-1}{\beta}\right)}\quad; \quad c_{n,p}= b_{n,p} \left( 1 - \left(1+\frac{2}{np}\right) \frac{\Gamma\left(\frac{np}{2\beta}+\frac{\alpha}{\beta}\right)^2}{\Gamma\left(\frac{np}{2\beta}+\frac{\alpha+1}{\beta}\right)\Gamma\left(\frac{np}{2\beta}+\frac{\alpha-1}{\beta}\right)}\right);\\
         &d_{n,p}= R^{-\frac{1}{\beta}}\frac{np-2}{n-p-1} \frac{\Gamma(\frac{np}{2\beta}+\frac{\alpha-2}{\beta})}{\Gamma\left(\frac{np}{2\beta}+\frac{\alpha-1}{\beta}\right)}; \\
         &e_{n,p}=R^{-\frac{2}{\beta}}\frac{(np-2)(np-4)}{(n-p)(n-p-1)(n-p-3)}\frac{\Gamma(\frac{np}{2\beta}+\frac{\alpha-3}{\beta})}{\Gamma\left(\frac{np}{2\beta}+\frac{\alpha-1}{\beta}\right)}\\
         &f_{n,p}=e_{n,p}\left(1 - \frac{(n-p)(np-3)(np-2)}{(n-p-1)(n-p-2)(np-4)} \frac{\Gamma\left(\frac{np}{2\beta}+\frac{\alpha-2}{\beta}\right)^2}{\Gamma\left(\frac{np}{2\beta}+\frac{\alpha-3}{\beta}\right)\Gamma\left(\frac{np}{2\beta}+\frac{\alpha-1}{\beta}\right)}\right).
    \end{array}
\end{equation}

\section{Numerical experiments}
\label{sec:application}

In this section, we highlight the merit of Elliptical Wishart distributions on real data compared to the Wishart distribution. Indeed, we assess to which extent some encephalographic signals can be fitted with Elliptical Wishart models. In order to perform such a distribution fitting efficiently, we need first to generate samples from Elliptical Wishart distributions in a numerically practical way. This is achieved in Section~\ref{subsec:generation} wherein we also suggest a deft method to draw samples from Inverse Elliptical distributions. This accounts for a contribution of the paper as our new technique of generating samples drawn from Elliptical and Inverse Elliptical Wishart distributions outperforms the existing technique. Thereafter, we exploit this method in Section~\ref{subsec:fitting} to fit the empirical cumulative density function of real EEG data in order to show that the Elliptical Wishart distribution offers a more relevant fitting than the Wishart distribution, commonly used as the state-of-the-art model based on the Gaussianity hypothesis.

\subsection{Samples generation}
\label{subsec:generation}
This first subsection is dedicated to developing an efficient way to draw samples from Elliptical and Inverse Elliptical Wishart distributions.
In fact, by definition of the Inverse Elliptical Wishart distribution (Definition~\ref{def:defEIW}), it is enough to be able to generate samples from the Elliptical Wishart distribution. 
Again, to achieve this, we heavily rely on the stochastic representation from Theorem~\ref{thm:stoc rep} and existing results for the Wishart distribution.

A naive way to draw samples from the Elliptical Wishart distribution $\mathcal{EW}(n,\MAT\Sigma, h_{np})$ with center $\MAT\Sigma\in\mathcal{S}^{++}_p$ is to exploit Definition~\ref{def:defEW}.
Indeed, one can simply draw $\VEC{x}\in\mathbb{R}^{np}$ from the multivariate elliptical distribution $\mathcal{E}_{np}(\VEC 0, \MAT I_{np},h_{np})$%
\footnote{
    The usual way to generate such random vector $\VEC{x}$ is to leverage their stochastic representation~\eqref{eq:storepvectors}, \emph{i.e.}, $\VEC{x}=\sqrt{\mathcal{Q}}\VEC{u}$.
    To generate a uniformly distributed vector $\VEC{u}$ on the unit sphere $\mathcal{C}_{np}$, one can draw a Gaussian vector $\VEC{z}\sim\mathcal{N}_{np}(\VEC 0, \MAT I_{np})$ and then set $\VEC{u} = \frac{\VEC{z}}{\|\VEC{z}\|_2}$.
    Finally, the second-order modular random variable $\mathcal{Q}$ is generated from its corresponding distribution.
}
to construct the random matrix $\MAT{X}\sim\mathcal{E}_{p,n}(\MAT 0,\MAT I_p,\MAT I_n,h_{np})$ such that $\vvec(\MAT X)=\VEC x$ (Definition~\ref{def:matrix_elliptical_dist}).
Then, from the Cholesky decomposition $\MAT\Sigma=\MAT{L}\MAT{L}^\top$%
\footnote{
    $\MAT{L}\in\mathbb{R}^{p\times p}$, lower triangular matrix with strictly positive diagonal elements.
},
a random matrix $\MAT{S}\sim\mathcal{EW}(n,\MAT\Sigma, h_{np})$ is obtained by taking $\MAT{S}=\MAT{L}\MAT{X}\MAT{X}^\top\MAT{L}^\top$.
However, this procedure quickly becomes computationally very demanding when increasing the degree of freedom $n$ since it implies generating a $\mathbb{R}^{np}$ vector.
Since the latter usually corresponds to the number of available samples, it is commonly quite big.
In such a case, the method described above is not satisfying.

To deal with this issue in the Wishart case, the so-called Bartlett decomposition~\cite{bartlett1934xx} is exploited.
It indeed provides a very computationally efficient way to generate random Wishart matrices.
First, to build $\MAT S\sim\mathcal{W}(n,\MAT\Sigma)$, $\MAT R\sim\mathcal{W}(n,\MAT I_p)$ is constructed.
$\MAT S$ is then obtained through $\MAT S = \MAT L\MAT R\MAT L^\top$, where $\MAT\Sigma=\MAT L\MAT L^\top$ is the Cholesky decomposition of $\MAT\Sigma$.
To synthesize $\MAT R$, its Cholesky decomposition $\MAT{R}=\MAT{T}\MAT{T}^\top$ is also exploited.
Indeed, from the Bartlett decomposition, random values $(\MAT T_{k\ell})_{1\leq \ell\leq k \leq p}$ are independent.
Moreover, every diagonal element $\MAT T_{kk}$ ($1\leq k\leq p$) is drawn from the Chi distribution $\chi_{n-k+1}$ with $n-k+1$ degrees of freedom, while the lower triangular elements $\MAT T_{k\ell}$ ($1\leq \ell < k\leq p$) are drawn from the normal distribution $\mathcal{N}(0,1)$.
This yields a simple way to generate $\MAT T$ and thus $\MAT S\sim\mathcal{W}(n,\MAT\Sigma)$.

Similarly to the multivariate case, the stochastic representation can be exploited to simulate random Elliptical Wishart matrices.
To get $\MAT S\sim\mathcal{EW}(n,\MAT\Sigma,h_{np})$, we exploit Theorem~\ref{thm:stoc rep} and set $\MAT S=\mathcal{Q}\MAT L\MAT V\MAT L^\top$, where $\MAT L$ corresponds to the Cholesky decomposition of $\MAT\Sigma$, $\mathcal{Q}$ is the so-called second-order modular random variable and $\MAT V$ follows the Normalized Wishart distribution $\mathcal{NW}(n,p)$.
Randomly generating $\mathcal{Q}$ can be achieved using classic techniques like for the multivariate elliptical case.
To construct a proper $\MAT V$, we synthesize $\MAT R\sim\mathcal{W}(n,\MAT I_p)$ with the Bartlett decomposition approach described above and then set $\MAT V = \frac{\MAT R}{\tr(\MAT R)}$.
The proposed method to generate data from the Elliptical Wishart distribution is summarized in Algorithm~\ref{alg:sampling}.

 \begin{algorithm}
\caption{Random sample generation from the Elliptical Wishart distribution}
\label{alg:sampling}
\begin{algorithmic}
\STATE \textbf{Input:} dimension $p$, degrees of freedom $n \geq p$, center $\MAT \Sigma \in \mathcal{S}_p^{++}$, density generator $h_{np}$
\STATE \textbf{Output:} random matrix $\MAT S\sim \mathcal{EW}(n,\MAT \Sigma,h_{np})$
\vspace*{5pt}
\STATE Generate $\mathcal{Q}$ with PDF $t\mapsto \frac{\pi^{np/2}}{\Gamma(np/2)} h_{np}(t)t^{\frac{np}{2}-1}$
\STATE Generate lower triangular matrix $\MAT T\in\mathbb{R}^{p\times p}$ such that:
$\MAT T_{kk}\stackrel{\text{i.i.d}}{\sim} \chi_{n-k+1}$, $1\leq k\leq p$ and 
$\MAT T_{k\ell}\stackrel{\text{i.i.d}}{\sim} \mathcal{N}(0,1)$, $1\leq \ell < k \leq p$, independent from the diagonal entries.
\STATE Set $\MAT R=\MAT T\MAT T^\top$
\STATE Let $\MAT V=\frac{\MAT R}{\tr(\MAT R)}$
\STATE Compute Cholesky decomposition $\MAT\Sigma=\MAT L\MAT L^\top$
\STATE Let $\MAT S=\mathcal{Q}\MAT L\MAT V\MAT L^\top$
\end{algorithmic}
\end{algorithm}

\subsection{Electroencephalographic data fitting}
\label{subsec:fitting}

We perform some distribution fitting over EEG data to assess the interest of Elliptical Wishart distributions in practice\footnote{%
The code to reproduce these fitting experiments is provided in \href{https://github.com/IA3005/fitting_EEG.git}{https://github.com/IA3005/fitting\_EEG.git}.}%
.
We consider the Steady-State Visually Evoked Potential (SSVEP) dataset~\cite{kalunga2016online}, which contains recordings from $12$ subjects. In the SSVEP paradigm, subjects are exposed to a light blinking at various frequencies. Data are thus divided into several classes corresponding to these various stimuli frequencies. In our case, the dataset contains four classes corresponding to stimuli at $13$Hz, $17$Hz, and $21$Hz, along with a resting state. To keep our experiment simple, we limit ourselves to subject $12$, which features the cleanest data. These data contain $160$ trials equally partitioned between the four classes. For each trial, we have $p=8$ electrodes and $n=768$ samples ($3$ seconds sampled at $256$Hz). For further details on these data, the reader is referred to~\cite{kalunga2016online} and to the MOABB platform~\cite{Aristimunha_Mother_of_all_2023}.

One distribution fitting is achieved for each class. We point out that for the considered paradigm, the raw trials $\MAT X_i$'s are centered by nature. In our experiments, we recentered them anyway, and then, we rescaled them up to a constant factor ($\times 10^{-6}$) to avoid computational problems. For class $z$, we first estimate the center $\MAT\Sigma^{(z)}$ of the tested distribution from the covariance matrices $\MAT S_i^{(z)}=\MAT X_i^{(z)} \MAT X_i^{(z)\,\top}$ of the trials $\MAT X_i^{(z)}\in\mathbb{R}^{np}$ belonging to class $z$. In this work, we consider the following distributions for the testing: the Wishart distribution $\mathcal{W}(n,\MAT\Sigma^{(z)})$ and the $t$-Wishart distribution $t\textup{-}\mathcal{W}(n,\MAT\Sigma^{(z)},\nu^{(z)})$ with $\nu^{(z)}$ degrees of freedom for the $t$- density generator. The latter parameter $\nu^{(z)}$ is chosen through trials and errors in order to get the best fitting results possible.
The actual values used in our experiments are given in Table~\ref{tab:optimal_dof}. 
$\MAT\Sigma^{(z)}$ is set as the maximum likelihood estimator of the distribution.
For the Wishart distribution, it is simply given by $\MAT\Sigma_{\mathcal{W}}^{(z)}=\frac1{nK_z}\sum_i\MAT S_i^{(z)}$, where $K_z$ corresponds to the number of trials belonging to class $z$.
For the $t$-Wishart distribution, no closed form exists and we resort to the iterative algorithm derived in~\cite{ayadi2023elliptical} to estimate it. The resulting estimator is denoted by $\MAT\Sigma_{t\text{-}\mathcal{W}}^{(z)}$.

\begin{table}[t!]
\centering
\caption{Degrees of freedom to generate $t$-Wishart samples for fitting EEG data.}
\begin{tabular}{|c|c|c|c|}
\hline
$\nu^{(13)}$ & $\nu^{(17)}$ & $\nu^{(21)}$ & $\nu^{(\text{resting})}$ \\ \hline
$40$ & $35$ & $50$ & $23$ \\ \hline
\end{tabular}
\label{tab:optimal_dof}
\end{table}

The actual fitting procedure consists in estimating the empirical CDFs of the traces, traces of square, traces of the third power, $\ell_2$-norms, $\ell_2$-norms of the square, $\ell_2$-norms of the third power, and log-determinants of real EEG covariances $\{\MAT S_i^{(z)}\}_{i=1}^{K_z}$, for every class $z\in\{13,17,21,\text{resting}\}$.
We compare these to the CDFs of the traces, traces of square, traces of the third power, $\ell_2$-norms, $\ell_2$-norms of the square, $\ell_2$-norms of the third power, and log-determinants of the Wishart distribution $\mathcal{W}(n,\MAT\Sigma_{\mathcal{W}}^{(z)})$ and $t$-Wishart distribution $t\textup{-}\mathcal{W}(n,\MAT\Sigma_{t\text{-}\mathcal{W}}^{(z)},\nu^{(z)})$.
To avoid the computation of exact CDFs of these quantities for the Wishart and $t$-Wishart models, we instead generate random samples from these distributions with the procedure introduced in Section~\ref{subsec:generation} and then empirically estimate the CDFs.
All these CDFs are plotted in Figure~\ref{fig:fitting_trace_subj12}, \ref{fig:fitting_trace2_subj12},
\ref{fig:fitting_trace3_subj12}, 
\ref{fig:fitting_norm_subj12}, 
\ref{fig:fitting_norm2_subj12},
\ref{fig:fitting_norm3_subj12}, and~
\ref{fig:fitting_det_subj12}.
To not only rely on visual inspection of the results, but we also perform goodness-to-fit Kolmogorov-Smirnov tests of the traces, traces of the square, traces of the third power, $\ell_2$-norms, $\ell_2$-norms of the square, $\ell_2$-norms of the third power, and log-determinants of the data at hand.
Since we simulate data from the Wishart and $t$-Wishart distributions, we rely on two-sample tests.
The null hypothesis is that compared samples are drawn from the same distribution.
The $p$-values of the Kolmogorov-Smirnov tests of the traces, traces of the square, traces of the third power, $\ell_2$-norms, $\ell_2$-norms of the square, $\ell_2$-norms of the third power, and log-determinants are provided in Table~\ref{tab:fit_trace}, \ref{tab:fit_trace2}, \ref{tab:fit_trace3}, \ref{tab:fit_norm}, \ref{tab:fit_norm2}, \ref{tab:fit_norm3}, and~\ref{tab:fit_det}, respectively.

From every figure and table, the conclusion is very clear.
On these real EEG data, the $t$-Wishart distribution better fits the covariances than the Wishart distribution for every class.
As a matter of fact, while the correspondence between the Wishart distribution and the data at hand is quite poor, the $t$-Wishart distribution is a pretty good fit.
Even though the considered dataset is very specific and quite small, this clearly demonstrates the practical interest of such distribution. For instance, this spurs the design of a Bayesian classifier of EEG signals that exploits $t$-Wishart in a discriminant analysis framework, as presented in \cite{ayadi2023t}, wherein the proposed classifier outperforms the state-of-the-art method on the tested datasets.

\begin{figure}[t!]
\centering
\begin{tikzpicture}
    \begin{axis}[
        width  =7cm,
        height =5cm,
        at     ={(0,0)},
        xlabel      = {$x$},
        ylabel = {$\widehat{P}(\tr(\MAT S)\leq x)$},
        title = {$13$ Hz},
        legend style={at={(0,0)},    
                    anchor=north,legend columns=-1},
        legend to name={mylegend},
        ]
        \addplot[color=green,line width=0.9pt] table [x=x,y=t_wishart_cdf,col sep=comma] {./figures/cdf_trace_class_13.txt};
        \addlegendentry{$t$-Wishart};
        \addplot[color=myorange,line width=0.9pt] table  [x=x,y=wishart_cdf,col sep=comma] {./figures/cdf_trace_class_13.txt};
        \addlegendentry{Wishart};
        \addplot[color=myblue,line width=0.9pt] table  [x=x,y=eeg_cdf,col sep=comma] {./figures/cdf_trace_class_13.txt};
        \addlegendentry{EEG};
    \end{axis}
\end{tikzpicture}
\begin{tikzpicture}
    \begin{axis}[
        width  =7cm,
        height =5cm,
        at     ={(0,0)},
        xlabel      = {$x$},
        ylabel = {$\widehat{P}(\tr(\MAT S)\leq x)$},
        title = {$17$ Hz},
        legend style={at={(0,0)},    
                    anchor=north,legend columns=-1},
        legend to name={mylegend},
        ]
        \addplot[color=green,line width=0.9pt] table [x=x,y=t_wishart_cdf,col sep=comma] {./figures/cdf_trace_class_17.txt};
        \addlegendentry{$t$-Wishart};
        \addplot[color=myorange,line width=0.9pt] table  [x=x,y=wishart_cdf,col sep=comma] {./figures/cdf_trace_class_17.txt};
        \addlegendentry{Wishart};
        \addplot[color=myblue,line width=0.9pt] table  [x=x,y=eeg_cdf,col sep=comma] {./figures/cdf_trace_class_17.txt};
        \addlegendentry{EEG};
    \end{axis}
\end{tikzpicture}
\begin{tikzpicture}
    \begin{axis}[
        width  =7cm,
        height =5cm,
        at     ={(0,0)},
        xlabel      = {$x$},
        ylabel = {$\widehat{P}(\tr(\MAT S)\leq x)$},
        title = {$21$ Hz},
        legend style={at={(0,0)},    
                    anchor=north,legend columns=-1},
        legend to name={mylegend},
        ]
        \addplot[color=green,line width=0.9pt] table [x=x,y=t_wishart_cdf,col sep=comma] {./figures/cdf_trace_class_21.txt};
        \addlegendentry{$t$-Wishart};
        \addplot[color=myorange,line width=0.9pt] table  [x=x,y=wishart_cdf,col sep=comma] {./figures/cdf_trace_class_21.txt};
        \addlegendentry{Wishart};
        \addplot[color=myblue,line width=0.9pt] table  [x=x,y=eeg_cdf,col sep=comma] {./figures/cdf_trace_class_21.txt};
        \addlegendentry{EEG};
    \end{axis}
\end{tikzpicture}
\begin{tikzpicture}
    \begin{axis}[
        width  =7cm,
        height =5cm,
        at     ={(0,0)},
        xlabel      = {$x$},
        ylabel = {$\widehat{P}(\tr(\MAT S)\leq x)$},
        title = {Resting state},
        legend style={at={(0,0)},    
                    anchor=north,legend columns=-1},
        legend to name={mylegend},
        ]
        \addplot[color=green,line width=0.9pt] table [x=x,y=t_wishart_cdf,col sep=comma] {./figures/cdf_trace_class_rest.txt};
        \addlegendentry{$t$-Wishart};
        \addplot[color=myorange,line width=0.9pt] table  [x=x,y=wishart_cdf,col sep=comma] {./figures/cdf_trace_class_rest.txt};
        \addlegendentry{Wishart};
        \addplot[color=myblue,line width=0.9pt] table  [x=x,y=eeg_cdf,col sep=comma] {./figures/cdf_trace_class_rest.txt};
        \addlegendentry{EEG};
    \end{axis}
\end{tikzpicture}
\ref{mylegend}
 \caption{Empirical CDF of the traces of the covariance matrices extracted from EEG signals along with the CDF of the trace of Wishart matrix and the CDF of the trace of $t$-Wishart matrix, obtained as the empirical CDFs of a large number ($10^5$) of traces of generated samples from Wishart $\mathcal{W}(n,\MAT\Sigma_{\mathcal{W}}^{(z)})$ and $t$-Wishart $t$-$\mathcal{W}(n,\MAT\Sigma_{t\text{-}\mathcal{W}}^{(z)},\nu^{(z)})$ respectively.  }
    \label{fig:fitting_trace_subj12}
\end{figure}

\begin{table}[t!]
\centering
\caption{p-values of the two-sample Kolmogorov-Smirnov tests on traces}
\begin{tabular}{l|l|l|l|l|}
\cline{2-5}
 & $13$ Hz & $17$ Hz & $21$ Hz & Resting \\ \hline
\multicolumn{1}{|l|}{Wishart} & $10^{-8}$ & $2.10^{-5}$ & $6.10^{-6}$ & $6.10^{-7}$ \\ \hline
\multicolumn{1}{|l|}{$t$-Wishart} & $0.3352$ & $0.7823$ & $0.5311$ & $0.765$ \\ \hline
\end{tabular}
\label{tab:fit_trace}
\end{table}

\begin{figure}[t!]
\centering
\begin{tikzpicture}
    \begin{axis}[
        width  =7cm,
        height =5cm,
        at     ={(0,0)},
        xlabel      = {$x$},
        ylabel = {$\widehat{P}(\tr(\MAT S^2)\leq x)$},
        title = {$13$ Hz},
        legend style={at={(0,0)},    
                    anchor=north,legend columns=-1},
        legend to name={mylegend},
        ]
        \addplot[color=green,line width=0.9pt] table [x=x,y=t_wishart_cdf,col sep=comma] {./figures/cdf_powertrace_2_class_13.txt};
        \addlegendentry{$t$-Wishart};
        \addplot[color=myorange,line width=0.9pt] table  [x=x,y=wishart_cdf,col sep=comma] {./figures/cdf_powertrace_2_class_13.txt};
        \addlegendentry{Wishart};
        \addplot[color=myblue,line width=0.9pt] table  [x=x,y=eeg_cdf,col sep=comma] {./figures/cdf_powertrace_2_class_13.txt};
        \addlegendentry{EEG};
    \end{axis}
\end{tikzpicture}
\begin{tikzpicture}
    \begin{axis}[
        width  =7cm,
        height =5cm,
        at     ={(0,0)},
        xlabel      = {$x$},
         ylabel = {$\widehat{P}(\tr(\MAT S^2)\leq x)$},
        title = {$17$ Hz},
        legend style={at={(0,0)},    
                    anchor=north,legend columns=-1},
        legend to name={mylegend},
        ]
        \addplot[color=green,line width=0.9pt] table [x=x,y=t_wishart_cdf,col sep=comma] {./figures/cdf_powertrace_2_class_17.txt};
        \addlegendentry{$t$-Wishart};
        \addplot[color=myorange,line width=0.9pt] table  [x=x,y=wishart_cdf,col sep=comma] {./figures/cdf_powertrace_2_class_17.txt};
        \addlegendentry{Wishart};
        \addplot[color=myblue,line width=0.9pt] table  [x=x,y=eeg_cdf,col sep=comma] {./figures/cdf_powertrace_2_class_17.txt};
        \addlegendentry{EEG};
    \end{axis}
\end{tikzpicture}
\begin{tikzpicture}
    \begin{axis}[
        width  =7cm,
        height =5cm,
        at     ={(0,0)},
        xlabel      = {$x$},
        ylabel = {$\widehat{P}(\tr(\MAT S^2)\leq x)$},
        title = {$21$ Hz},
        legend style={at={(0,0)},    
                    anchor=north,legend columns=-1},
        legend to name={mylegend},
        ]
        \addplot[color=green,line width=0.9pt] table [x=x,y=t_wishart_cdf,col sep=comma] {./figures/cdf_powertrace_2_class_21.txt};
        \addlegendentry{$t$-Wishart};
        \addplot[color=myorange,line width=0.9pt] table  [x=x,y=wishart_cdf,col sep=comma] {./figures/cdf_powertrace_2_class_21.txt};
        \addlegendentry{Wishart};
        \addplot[color=myblue,line width=0.9pt] table  [x=x,y=eeg_cdf,col sep=comma] {./figures/cdf_powertrace_2_class_21.txt};
        \addlegendentry{EEG};
    \end{axis}
\end{tikzpicture}
\begin{tikzpicture}
    \begin{axis}[
        width  =7cm,
        height =5cm,
        at     ={(0,0)},
        xlabel      = {$x$},
         ylabel = {$\widehat{P}(\tr(\MAT S^2)\leq x)$},
        title = {Resting state},
        legend style={at={(0,0)},    
                    anchor=north,legend columns=-1},
        legend to name={mylegend},
        ]
        \addplot[color=green,line width=0.9pt] table [x=x,y=t_wishart_cdf,col sep=comma] {./figures/cdf_powertrace_2_class_rest.txt};
        \addlegendentry{$t$-Wishart};
        \addplot[color=myorange,line width=0.9pt] table  [x=x,y=wishart_cdf,col sep=comma] {./figures/cdf_powertrace_2_class_rest.txt};
        \addlegendentry{Wishart};
        \addplot[color=myblue,line width=0.9pt] table  [x=x,y=eeg_cdf,col sep=comma] {./figures/cdf_powertrace_2_class_rest.txt};
        \addlegendentry{EEG};
    \end{axis}
\end{tikzpicture}
\ref{mylegend}
 \caption{Empirical CDF of the traces of the squared covariance matrices extracted from EEG signals along with the CDF of the trace of squared Wishart matrix and the CDF of the trace of squared $t$-Wishart matrix, obtained as the empirical CDFs of a large number ($10^5$) of traces of squared covariance matrices of generated samples from Wishart $\mathcal{W}(n,\MAT\Sigma_{\mathcal{W}}^{(z)})$ and $t$-Wishart $t$-$\mathcal{W}(n,\MAT\Sigma_{t\text{-}\mathcal{W}}^{(z)},\nu^{(z)})$ respectively.  }
    \label{fig:fitting_trace2_subj12}
\end{figure}

\begin{table}[t!]
\centering
\caption{p-values of the two-sample Kolmogorov-Smirnov tests on traces of squares}
\begin{tabular}{l|l|l|l|l|}
\cline{2-5}
 & $13$ Hz & $17$ Hz& $21$ Hz & Resting \\ \hline
\multicolumn{1}{|l|}{Wishart} & $9.10^{-9}$ & $2.10^{-5}$ & $6.10^{-5}$ & $6.10^{-7}$ \\ \hline
\multicolumn{1}{|l|}{$t$-Wishart} & $0.4544$ & $0.6268$ & $0.5853$ & $0.7951$ \\ \hline
\end{tabular}
\label{tab:fit_trace2}
\end{table}

\begin{figure}[t!]
\centering
\begin{tikzpicture}
    \begin{axis}[
        width  =7cm,
        height =5cm,
        at     ={(0,0)},
        xlabel      = {$x$},
        ylabel = {$\widehat{P}(\tr(\MAT S^3)\leq x)$},
        title = {$13$ Hz},
        legend style={at={(0,0)},    
                    anchor=north,legend columns=-1},
        legend to name={mylegend},
        ]
        \addplot[color=green,line width=0.9pt] table [x=x,y=t_wishart_cdf,col sep=comma] {./figures/cdf_powertrace_3_class_13.txt};
        \addlegendentry{$t$-Wishart};
        \addplot[color=myorange,line width=0.9pt] table  [x=x,y=wishart_cdf,col sep=comma] {./figures/cdf_powertrace_3_class_13.txt};
        \addlegendentry{Wishart};
        \addplot[color=myblue,line width=0.9pt] table  [x=x,y=eeg_cdf,col sep=comma] {./figures/cdf_powertrace_3_class_13.txt};
        \addlegendentry{EEG};
    \end{axis}
\end{tikzpicture}
\begin{tikzpicture}
    \begin{axis}[
        width  =7cm,
        height =5cm,
        at     ={(0,0)},
        xlabel      = {$x$},
         ylabel = {$\widehat{P}(\tr(\MAT S^3)\leq x)$},
        title = {$17$ Hz},
        legend style={at={(0,0)},    
                    anchor=north,legend columns=-1},
        legend to name={mylegend},
        ]
        \addplot[color=green,line width=0.9pt] table [x=x,y=t_wishart_cdf,col sep=comma] {./figures/cdf_powertrace_3_class_17.txt};
        \addlegendentry{$t$-Wishart};
        \addplot[color=myorange,line width=0.9pt] table  [x=x,y=wishart_cdf,col sep=comma] {./figures/cdf_powertrace_3_class_17.txt};
        \addlegendentry{Wishart};
        \addplot[color=myblue,line width=0.9pt] table  [x=x,y=eeg_cdf,col sep=comma] {./figures/cdf_powertrace_3_class_17.txt};
        \addlegendentry{EEG};
    \end{axis}
\end{tikzpicture}
\begin{tikzpicture}
    \begin{axis}[
        width  =7cm,
        height =5cm,
        at     ={(0,0)},
        xlabel      = {$x$},
        ylabel = {$\widehat{P}(\tr(\MAT S^3)\leq x)$},
        title = {$21$ Hz},
        legend style={at={(0,0)},    
                    anchor=north,legend columns=-1},
        legend to name={mylegend},
        ]
        \addplot[color=green,line width=0.9pt] table [x=x,y=t_wishart_cdf,col sep=comma] {./figures/cdf_powertrace_3_class_21.txt};
        \addlegendentry{$t$-Wishart};
        \addplot[color=myorange,line width=0.9pt] table  [x=x,y=wishart_cdf,col sep=comma] {./figures/cdf_powertrace_3_class_21.txt};
        \addlegendentry{Wishart};
        \addplot[color=myblue,line width=0.9pt] table  [x=x,y=eeg_cdf,col sep=comma] {./figures/cdf_powertrace_3_class_21.txt};
        \addlegendentry{EEG};
    \end{axis}
\end{tikzpicture}
\begin{tikzpicture}
    \begin{axis}[
        width  =7cm,
        height =5cm,
        at     ={(0,0)},
        xlabel      = {$x$},
         ylabel = {$\widehat{P}(\tr(\MAT S^3)\leq x)$},
        title = {Resting state},
        legend style={at={(0,0)},    
                    anchor=north,legend columns=-1},
        legend to name={mylegend},
        ]
        \addplot[color=green,line width=0.9pt] table [x=x,y=t_wishart_cdf,col sep=comma] {./figures/cdf_powertrace_3_class_rest.txt};
        \addlegendentry{$t$-Wishart};
        \addplot[color=myorange,line width=0.9pt] table  [x=x,y=wishart_cdf,col sep=comma] {./figures/cdf_powertrace_3_class_rest.txt};
        \addlegendentry{Wishart};
        \addplot[color=myblue,line width=0.9pt] table  [x=x,y=eeg_cdf,col sep=comma] {./figures/cdf_powertrace_3_class_rest.txt};
        \addlegendentry{EEG};
    \end{axis}
\end{tikzpicture}
\ref{mylegend}
 \caption{Empirical CDF of the traces of the third power of covariance matrices extracted from EEG signals along with the CDF of the trace of the third power of Wishart matrix and the CDF of the trace of the third power of $t$-Wishart matrix, obtained as the empirical CDFs of a large number ($10^5$) of traces of the third power of covariance matrices of generated samples from Wishart $\mathcal{W}(n,\MAT\Sigma_{\mathcal{W}}^{(z)})$ and $t$-Wishart $t$-$\mathcal{W}(n,\MAT\Sigma_{t\text{-}\mathcal{W}}^{(z)},\nu^{(z)})$ respectively.  }
    \label{fig:fitting_trace3_subj12}
\end{figure}

\begin{table}[t!]
\centering
\caption{p-values of the two-sample Kolmogorov-Smirnov tests on traces of the third power.}
\begin{tabular}{l|l|l|l|l|}
\cline{2-5}
 & $13$ Hz & $17$ Hz& $21$ Hz & Resting \\ \hline
\multicolumn{1}{|l|}{Wishart} & $10^{-8}$ & $3.10^{-5}$ & $2.10^{-4}$ & $9.10^{-7}$ \\ \hline
\multicolumn{1}{|l|}{$t$-Wishart} & $0.4811$ & $0.4851$ & $0.8703$ & $0.8432$ \\ \hline
\end{tabular}
\label{tab:fit_trace3}
\end{table}

\begin{figure}[t!]
\centering
\begin{tikzpicture}
    \begin{axis}[
        width  =7cm,
        height =5cm,
        at     ={(0,0)},
        xlabel      = {$x$},
        ylabel = {$\widehat{P}(\|\MAT S\|_2\leq x)$},
        title = {$13$ Hz},
        legend style={at={(0,0)},    
                    anchor=north,legend columns=-1},
        legend to name={mylegend},
        ]
        \addplot[color=green,line width=0.9pt] table [x=x,y=t_wishart_cdf,col sep=comma] {./figures/cdf_norm_class_13.txt};
        \addlegendentry{$t$-Wishart};
        \addplot[color=myorange,line width=0.9pt] table  [x=x,y=wishart_cdf,col sep=comma] {./figures/cdf_norm_class_13.txt};
        \addlegendentry{Wishart};
        \addplot[color=myblue,line width=0.9pt] table  [x=x,y=eeg_cdf,col sep=comma] {./figures/cdf_norm_class_13.txt};
        \addlegendentry{EEG};
    \end{axis}
\end{tikzpicture}
\begin{tikzpicture}
    \begin{axis}[
        width  =7cm,
        height =5cm,
        at     ={(0,0)},
        xlabel      = {$x$},
        ylabel = {$\widehat{P}(\|\MAT S\|_2\leq x)$},
        title = {$17$ Hz},
        legend style={at={(0,0)},    
                    anchor=north,legend columns=-1},
        legend to name={mylegend},
        ]
        \addplot[color=green,line width=0.9pt] table [x=x,y=t_wishart_cdf,col sep=comma] {./figures/cdf_norm_class_17.txt};
        \addlegendentry{$t$-Wishart};
        \addplot[color=myorange,line width=0.9pt] table  [x=x,y=wishart_cdf,col sep=comma] {./figures/cdf_norm_class_17.txt};
        \addlegendentry{Wishart};
        \addplot[color=myblue,line width=0.9pt] table  [x=x,y=eeg_cdf,col sep=comma] {./figures/cdf_norm_class_17.txt};
        \addlegendentry{EEG};
    \end{axis}
\end{tikzpicture}
\begin{tikzpicture}
    \begin{axis}[
        width  =7cm,
        height =5cm,
        at     ={(0,0)},
        xlabel      = {$x$},
        ylabel = {$\widehat{P}(\|\MAT S\|_2\leq x)$},
        title = {$21$ Hz},
        legend style={at={(0,0)},    
                    anchor=north,legend columns=-1},
        legend to name={mylegend},
        ]
        \addplot[color=green,line width=0.9pt] table [x=x,y=t_wishart_cdf,col sep=comma] {./figures/cdf_norm_class_21.txt};
        \addlegendentry{$t$-Wishart};
        \addplot[color=myorange,line width=0.9pt] table  [x=x,y=wishart_cdf,col sep=comma] {./figures/cdf_norm_class_21.txt};
        \addlegendentry{Wishart};
        \addplot[color=myblue,line width=0.9pt] table  [x=x,y=eeg_cdf,col sep=comma] {./figures/cdf_norm_class_21.txt};
        \addlegendentry{EEG};
    \end{axis}
\end{tikzpicture}
\begin{tikzpicture}
    \begin{axis}[
        width  =7cm,
        height =5cm,
        at     ={(0,0)},
        xlabel      = {$x$},
        ylabel = {$\widehat{P}(\|\MAT S\|_2\leq x)$},
        title = {Resting state},
        legend style={at={(0,0)},    
                    anchor=north,legend columns=-1},
        legend to name={mylegend},
        ]
        \addplot[color=green,line width=0.9pt] table [x=x,y=t_wishart_cdf,col sep=comma] {./figures/cdf_norm_class_rest.txt};
        \addlegendentry{$t$-Wishart};
        \addplot[color=myorange,line width=0.9pt] table  [x=x,y=wishart_cdf,col sep=comma] {./figures/cdf_norm_class_rest.txt};
        \addlegendentry{Wishart};
        \addplot[color=myblue,line width=0.9pt] table  [x=x,y=eeg_cdf,col sep=comma] {./figures/cdf_norm_class_rest.txt};
        \addlegendentry{EEG};
    \end{axis}
\end{tikzpicture}
\ref{mylegend}
 \caption{Empirical CDF of the $\ell_2$-norms of the covariance matrices extracted from EEG signals along with the CDF of the $\ell_2$-norm of Wishart matrix and the CDF of the $\ell_2$-norm of $t$-Wishart matrix, obtained as the empirical CDFs of a large number ($10^5$) of $\ell_2$-norms of generated samples from Wishart $\mathcal{W}(n,\MAT\Sigma_{\mathcal{W}}^{(z)})$ and $t$-Wishart $t$-$\mathcal{W}(n,\MAT\Sigma_{t\text{-}\mathcal{W}}^{(z)},\nu^{(z)})$ respectively.  }
    \label{fig:fitting_norm_subj12}
\end{figure}

\begin{table}[t!]
\centering
\caption{p-values of the two-sample Kolmogorov-Smirnov tests on $\ell_2$-norms}
\begin{tabular}{l|l|l|l|l|}
\cline{2-5}
 & $13$ Hz & $17$ Hz & $21$ Hz & Resting \\ \hline
\multicolumn{1}{|l|}{Wishart} & $8.10^{-8}$ & $6.10^{-5}$ & $4.10^{-5}$ & $3.10^{-6}$ \\ \hline
\multicolumn{1}{|l|}{$t$-Wishart} & $0.4732$ & $0.9537$ & $0.7412$ & $0.8976$ \\ \hline
\end{tabular}
\label{tab:fit_norm}
\end{table}

\begin{figure}[t!]
\centering
\begin{tikzpicture}
    \begin{axis}[
        width  =7cm,
        height =5cm,
        at     ={(0,0)},
        xlabel      = {$x$},
        ylabel = {$\widehat{P}(\|\MAT S^2\|_2 \leq x)$},
        title = {$13$ Hz},
        legend style={at={(0,0)},    
                    anchor=north,legend columns=-1},
        legend to name={mylegend},
        ]
        \addplot[color=green,line width=0.9pt] table [x=x,y=t_wishart_cdf,col sep=comma] {./figures/cdf_powernorm_2_class_13.txt};
        \addlegendentry{$t$-Wishart};
        \addplot[color=myorange,line width=0.9pt] table  [x=x,y=wishart_cdf,col sep=comma] {./figures/cdf_powernorm_2_class_13.txt};
        \addlegendentry{Wishart};
        \addplot[color=myblue,line width=0.9pt] table  [x=x,y=eeg_cdf,col sep=comma] {./figures/cdf_powernorm_2_class_13.txt};
        \addlegendentry{EEG};
    \end{axis}
\end{tikzpicture}
\begin{tikzpicture}
    \begin{axis}[
        width  =7cm,
        height =5cm,
        at     ={(0,0)},
        xlabel      = {$x$},
         ylabel = {$\widehat{P}(\|\MAT S^2\|_2\leq x)$},
        title = {$17$ Hz},
        legend style={at={(0,0)},    
                    anchor=north,legend columns=-1},
        legend to name={mylegend},
        ]
        \addplot[color=green,line width=0.9pt] table [x=x,y=t_wishart_cdf,col sep=comma] {./figures/cdf_powernorm_2_class_17.txt};
        \addlegendentry{$t$-Wishart};
        \addplot[color=myorange,line width=0.9pt] table  [x=x,y=wishart_cdf,col sep=comma] {./figures/cdf_powernorm_2_class_17.txt};
        \addlegendentry{Wishart};
        \addplot[color=myblue,line width=0.9pt] table  [x=x,y=eeg_cdf,col sep=comma] {./figures/cdf_powernorm_2_class_17.txt};
        \addlegendentry{EEG};
    \end{axis}
\end{tikzpicture}
\begin{tikzpicture}
    \begin{axis}[
        width  =7cm,
        height =5cm,
        at     ={(0,0)},
        xlabel      = {$x$},
        ylabel = {$\widehat{P}(\|\MAT S^2\|_2\leq x)$},
        title = {$21$ Hz},
        legend style={at={(0,0)},    
                    anchor=north,legend columns=-1},
        legend to name={mylegend},
        ]
        \addplot[color=green,line width=0.9pt] table [x=x,y=t_wishart_cdf,col sep=comma] {./figures/cdf_powernorm_2_class_21.txt};
        \addlegendentry{$t$-Wishart};
        \addplot[color=myorange,line width=0.9pt] table  [x=x,y=wishart_cdf,col sep=comma] {./figures/cdf_powernorm_2_class_21.txt};
        \addlegendentry{Wishart};
        \addplot[color=myblue,line width=0.9pt] table  [x=x,y=eeg_cdf,col sep=comma] {./figures/cdf_powernorm_2_class_21.txt};
        \addlegendentry{EEG};
    \end{axis}
\end{tikzpicture}
\begin{tikzpicture}
    \begin{axis}[
        width  =7cm,
        height =5cm,
        at     ={(0,0)},
        xlabel      = {$x$},
         ylabel = {$\widehat{P}(\|\MAT S^2\|_2\leq x)$},
        title = {Resting state},
        legend style={at={(0,0)},    
                    anchor=north,legend columns=-1},
        legend to name={mylegend},
        ]
        \addplot[color=green,line width=0.9pt] table [x=x,y=t_wishart_cdf,col sep=comma] {./figures/cdf_powernorm_2_class_rest.txt};
        \addlegendentry{$t$-Wishart};
        \addplot[color=myorange,line width=0.9pt] table  [x=x,y=wishart_cdf,col sep=comma] {./figures/cdf_powernorm_2_class_rest.txt};
        \addlegendentry{Wishart};
        \addplot[color=myblue,line width=0.9pt] table  [x=x,y=eeg_cdf,col sep=comma] {./figures/cdf_powernorm_2_class_rest.txt};
        \addlegendentry{EEG};
    \end{axis}
\end{tikzpicture}
\ref{mylegend}
 \caption{Empirical CDF of the $\ell_2$-norms of the squared covariance matrices extracted from EEG signals along with the CDF of the $\ell_2$-norm of the squared Wishart matrix and the CDF of the $\ell_2$-norm of the squared $t$-Wishart matrix, obtained as the empirical CDFs of a large number ($10^5$) of $\ell_2$-norms of the squared covariance matrices of generated samples from Wishart $\mathcal{W}(n,\MAT\Sigma_{\mathcal{W}}^{(z)})$ and $t$-Wishart $t$-$\mathcal{W}(n,\MAT\Sigma_{t\text{-}\mathcal{W}}^{(z)},\nu^{(z)})$ respectively.  }
    \label{fig:fitting_norm2_subj12}
\end{figure}

\begin{table}[t!]
\centering
\caption{p-values of the two-sample Kolmogorov-Smirnov tests on norms of the squares.}
\begin{tabular}{l|l|l|l|l|}
\cline{2-5}
 & $13$ Hz& $17$ Hz & $21$ Hz & Resting \\ \hline
\multicolumn{1}{|l|}{Wishart} & $2.10^{-7}$ & $8.10^{-5}$ & $10^{-4}$ & $2.10^{-6}$ \\ \hline
\multicolumn{1}{|l|}{$t$-Wishart} & $0.5810$ & $0.8193$ & $0.9372$ & $0.8777$ \\ \hline
\end{tabular}
\label{tab:fit_norm2}
\end{table}

\begin{figure}[t!]
\centering
\begin{tikzpicture}
    \begin{axis}[
        width  =7cm,
        height =5cm,
        at     ={(0,0)},
        xlabel      = {$x$},
        ylabel = {$\widehat{P}(\|\MAT S^3\|_2 \leq x)$},
        title = {$13$ Hz},
        legend style={at={(0,0)},    
                    anchor=north,legend columns=-1},
        legend to name={mylegend},
        ]
        \addplot[color=green,line width=0.9pt] table [x=x,y=t_wishart_cdf,col sep=comma] {./figures/cdf_powernorm_3_class_13.txt};
        \addlegendentry{$t$-Wishart};
        \addplot[color=myorange,line width=0.9pt] table  [x=x,y=wishart_cdf,col sep=comma] {./figures/cdf_powernorm_3_class_13.txt};
        \addlegendentry{Wishart};
        \addplot[color=myblue,line width=0.9pt] table  [x=x,y=eeg_cdf,col sep=comma] {./figures/cdf_powernorm_3_class_13.txt};
        \addlegendentry{EEG};
    \end{axis}
\end{tikzpicture}
\begin{tikzpicture}
    \begin{axis}[
        width  =7cm,
        height =5cm,
        at     ={(0,0)},
        xlabel      = {$x$},
         ylabel = {$\widehat{P}(\|\MAT S^3\|_2\leq x)$},
        title = {$17$ Hz},
        legend style={at={(0,0)},    
                    anchor=north,legend columns=-1},
        legend to name={mylegend},
        ]
        \addplot[color=green,line width=0.9pt] table [x=x,y=t_wishart_cdf,col sep=comma] {./figures/cdf_powernorm_3_class_17.txt};
        \addlegendentry{$t$-Wishart};
        \addplot[color=myorange,line width=0.9pt] table  [x=x,y=wishart_cdf,col sep=comma] {./figures/cdf_powernorm_3_class_17.txt};
        \addlegendentry{Wishart};
        \addplot[color=myblue,line width=0.9pt] table  [x=x,y=eeg_cdf,col sep=comma] {./figures/cdf_powernorm_3_class_17.txt};
        \addlegendentry{EEG};
    \end{axis}
\end{tikzpicture}
\begin{tikzpicture}
    \begin{axis}[
        width  =7cm,
        height =5cm,
        at     ={(0,0)},
        xlabel      = {$x$},
        ylabel = {$\widehat{P}(\|\MAT S^3\|_2\leq x)$},
        title = {$21$ Hz},
        legend style={at={(0,0)},    
                    anchor=north,legend columns=-1},
        legend to name={mylegend},
        ]
        \addplot[color=green,line width=0.9pt] table [x=x,y=t_wishart_cdf,col sep=comma] {./figures/cdf_powernorm_3_class_21.txt};
        \addlegendentry{$t$-Wishart};
        \addplot[color=myorange,line width=0.9pt] table  [x=x,y=wishart_cdf,col sep=comma] {./figures/cdf_powernorm_3_class_21.txt};
        \addlegendentry{Wishart};
        \addplot[color=myblue,line width=0.9pt] table  [x=x,y=eeg_cdf,col sep=comma] {./figures/cdf_powernorm_3_class_21.txt};
        \addlegendentry{EEG};
    \end{axis}
\end{tikzpicture}
\begin{tikzpicture}
    \begin{axis}[
        width  =7cm,
        height =5cm,
        at     ={(0,0)},
        xlabel      = {$x$},
         ylabel = {$\widehat{P}(\|\MAT S^3\|_2\leq x)$},
        title = {Resting state},
        legend style={at={(0,0)},    
                    anchor=north,legend columns=-1},
        legend to name={mylegend},
        ]
        \addplot[color=green,line width=0.9pt] table [x=x,y=t_wishart_cdf,col sep=comma] {./figures/cdf_powernorm_3_class_rest.txt};
        \addlegendentry{$t$-Wishart};
        \addplot[color=myorange,line width=0.9pt] table  [x=x,y=wishart_cdf,col sep=comma] {./figures/cdf_powernorm_3_class_rest.txt};
        \addlegendentry{Wishart};
        \addplot[color=myblue,line width=0.9pt] table  [x=x,y=eeg_cdf,col sep=comma] {./figures/cdf_powernorm_3_class_rest.txt};
        \addlegendentry{EEG};
    \end{axis}
\end{tikzpicture}
\ref{mylegend}
 \caption{Empirical CDF of the $\ell_2$-norms of the $3$-rd power of covariance matrices extracted from EEG signals along with the CDF of the $\ell_2$-norm of the $3$-rd power of Wishart matrix and the CDF of the $\ell_2$-norm of the $3$-rd power of $t$-Wishart matrix, obtained as the empirical CDFs of a large number ($10^5$) of $\ell_2$-norms of the $3$-rd power of covariance matrices of generated samples from Wishart $\mathcal{W}(n,\MAT\Sigma_{\mathcal{W}}^{(z)})$ and $t$-Wishart $t$-$\mathcal{W}(n,\MAT\Sigma_{t\text{-}\mathcal{W}}^{(z)},\nu^{(z)})$ respectively.  }
    \label{fig:fitting_norm3_subj12}
\end{figure}

\begin{table}[t!]
\centering
\caption{p-values of the two-sample Kolmogorov-Smirnov tests on norms of the third power.}
\begin{tabular}{l|l|l|l|l|}
\cline{2-5}
 & $13$ Hz& $17$ Hz & $21$ Hz & Resting \\ \hline
\multicolumn{1}{|l|}{Wishart} & $10^{-6}$ & $9.10^{-5}$ & $10^{-4}$ & $10^{-6}$ \\ \hline
\multicolumn{1}{|l|}{$t$-Wishart} & $0.4076$ & $0.4819$ & $0.9786$ & $0.8852$ \\ \hline
\end{tabular}
\label{tab:fit_norm3}
\end{table}

\begin{figure}[t!]
\centering
\begin{tikzpicture}
    \begin{axis}[
        width  =7cm,
        height =5cm,
        at     ={(0,0)},
        xlabel      = {$x$},
        ylabel = {$\widehat{P}(-\log_{10}|\MAT S|\leq x)$},
        title = {$13$ Hz},
        legend style={at={(0,0)},    
                    anchor=north,legend columns=-1},
        legend to name={mylegend},
        ]
        \addplot[color=green,line width=0.9pt] table [x=x,y=t_wishart_cdf,col sep=comma] {./figures/cdf_det_class_13.txt};
        \addlegendentry{$t$-Wishart};
        \addplot[color=myorange,line width=0.9pt] table  [x=x,y=wishart_cdf,col sep=comma] {./figures/cdf_det_class_13.txt};
        \addlegendentry{Wishart};
        \addplot[color=myblue,line width=0.9pt] table  [x=x,y=eeg_cdf,col sep=comma] {./figures/cdf_det_class_13.txt};
        \addlegendentry{EEG};
    \end{axis}
\end{tikzpicture}
\begin{tikzpicture}
    \begin{axis}[
        width  =7cm,
        height =5cm,
        at     ={(0,0)},
        xlabel      = {$x$},
         ylabel = {$\widehat{P}(-\log_{10}|\MAT S|\leq x)$},
        title = {$17$ Hz},
        legend style={at={(0,0)},    
                    anchor=north,legend columns=-1},
        legend to name={mylegend},
        ]
        \addplot[color=green,line width=0.9pt] table [x=x,y=t_wishart_cdf,col sep=comma] {./figures/cdf_det_class_17.txt};
        \addlegendentry{$t$-Wishart};
        \addplot[color=myorange,line width=0.9pt] table  [x=x,y=wishart_cdf,col sep=comma] {./figures/cdf_det_class_17.txt};
        \addlegendentry{Wishart};
        \addplot[color=myblue,line width=0.9pt] table  [x=x,y=eeg_cdf,col sep=comma] {./figures/cdf_det_class_17.txt};
        \addlegendentry{EEG};
    \end{axis}
\end{tikzpicture}
\begin{tikzpicture}
    \begin{axis}[
        width  =7cm,
        height =5cm,
        at     ={(0,0)},
        xlabel      = {$x$},
        ylabel = {$\widehat{P}(-\log_{10}|\MAT S|\leq x)$},
        title = {$21$ Hz},
        legend style={at={(0,0)},    
                    anchor=north,legend columns=-1},
        legend to name={mylegend},
        ]
        \addplot[color=green,line width=0.9pt] table [x=x,y=t_wishart_cdf,col sep=comma] {./figures/cdf_det_class_21.txt};
        \addlegendentry{$t$-Wishart};
        \addplot[color=myorange,line width=0.9pt] table  [x=x,y=wishart_cdf,col sep=comma] {./figures/cdf_det_class_21.txt};
        \addlegendentry{Wishart};
        \addplot[color=myblue,line width=0.9pt] table  [x=x,y=eeg_cdf,col sep=comma] {./figures/cdf_det_class_21.txt};
        \addlegendentry{EEG};
    \end{axis}
\end{tikzpicture}
\begin{tikzpicture}
    \begin{axis}[
        width  =7cm,
        height =5cm,
        at     ={(0,0)},
        xlabel      = {$x$},
         ylabel = {$\widehat{P}(-\log_{10}|\MAT S|\leq x)$},
        title = {Resting state},
        legend style={at={(0,0)},    
                    anchor=north,legend columns=-1},
        legend to name={mylegend},
        ]
        \addplot[color=green,line width=0.9pt] table [x=x,y=t_wishart_cdf,col sep=comma] {./figures/cdf_det_class_rest.txt};
        \addlegendentry{$t$-Wishart};
        \addplot[color=myorange,line width=0.9pt] table  [x=x,y=wishart_cdf,col sep=comma] {./figures/cdf_det_class_rest.txt};
        \addlegendentry{Wishart};
        \addplot[color=myblue,line width=0.9pt] table  [x=x,y=eeg_cdf,col sep=comma] {./figures/cdf_det_class_rest.txt};
        \addlegendentry{EEG};
    \end{axis}
\end{tikzpicture}
\ref{mylegend}
 \caption{Empirical CDF of the $-\log_{10}$-determinants of the covariance matrices extracted from EEG signals along with the CDF of the $-\log_{10}$-determinant of Wishart matrix and the CDF of the $-\log_{10}$-determinant of $t$-Wishart matrix, obtained as the empirical CDFs of a large number ($10^5$) of $-\log_{10}$-determinants of generated samples from Wishart $\mathcal{W}(n,\MAT\Sigma_{\mathcal{W}}^{(z)})$ and $t$-Wishart $t$-$\mathcal{W}(n,\MAT\Sigma_{t\text{-}\mathcal{W}}^{(z)},\nu^{(z)})$ respectively.}
    \label{fig:fitting_det_subj12}
\end{figure}

\begin{table}[t!]
\centering
\caption{p-values of the two-sample Kolmogorov-Smirnov tests on $-\log_{10}$-determinants}
\begin{tabular}{l|l|l|l|l|}
\cline{2-5}
 & $13$ Hz & $17$ Hz & $21$ Hz & Resting\\ \hline
\multicolumn{1}{|l|}{Wishart} & $10^{-15}$ & $10^{-10}$ & $10^{-11}$ & $9.10^{-10}$ \\ \hline
\multicolumn{1}{|l|}{$t$-Wishart} & $0.0231$ & $0.3301$ & $0.1997$ & $0.1255$ \\ \hline
\end{tabular}
\label{tab:fit_det}
\end{table}


\section{Conclusion and Perspectives}
\label{sec:conclusion}

This paper studies some extensions of the Wishart and Inverse Wishart distributions: the Elliptical Wishart and Inverse Elliptical Wishart distributions, which allow to better take into account noise and outliers at the covariance level.
Our main contribution is to obtain stochastic representations for random matrices drawn from Elliptical Wishart and Inverse Elliptical Wishart distributions.
From there, some statistical properties of the Elliptical Wishart and Inverse Elliptical Wishart that are not covered in the literature are obtained.
More specifically, the expectation, variance and Kronecker moments up to any order are computed.
The stochastic representation also simplifies existing proofs based on tedious integral calculations to compute already derived properties, such as the characteristic function of Elliptical Wishart.
This representation is yet again leveraged to provide a computationally efficient method to generate Elliptical Wishart and Inverse Elliptical Wishart random matrices.
Finally, fitting experiments on real EEG data show the practical interest of such elliptical extensions.

This work yields several perspectives.
In particular, the study of the asymptotic behavior of Elliptical Wishart and Inverse Elliptical Wishart random matrices is still missing.
Furthermore, the use of these distributions in practice is still very limited.
In particular, the maximum likelihood estimator of the Inverse Elliptical Wishart remain unknown.
Leveraging these distributions in applications such as Bayesian learning should also be investigated.



\section*{Acknowledgments}
This work is partially supported by a public grant overseen by the French National Research Agency (ANR) through the program UDOPIA, a project funded by the ANR-20-THIA-0013-01.
This research was also supported by DATAIA Convergence Institute as part of the ``Programme d’Investissement d’Avenir", (ANR-17-CONV-0003) operated by laboratoire des Signaux et Systèmes.

\bibliographystyle{IEEEtran}
\bibliography{ref.bib}

\begin{thebibliography}{10}
\providecommand{\url}[1]{#1}
\csname url@samestyle\endcsname
\providecommand{\newblock}{\relax}
\providecommand{\bibinfo}[2]{#2}
\providecommand{\BIBentrySTDinterwordspacing}{\spaceskip=0pt\relax}
\providecommand{\BIBentryALTinterwordstretchfactor}{4}
\providecommand{\BIBentryALTinterwordspacing}{\spaceskip=\fontdimen2\font plus
\BIBentryALTinterwordstretchfactor\fontdimen3\font minus
  \fontdimen4\font\relax}
\providecommand{\BIBforeignlanguage}[2]{{%
\expandafter\ifx\csname l@#1\endcsname\relax
\typeout{** WARNING: IEEEtran.bst: No hyphenation pattern has been}%
\typeout{** loaded for the language `#1'. Using the pattern for}%
\typeout{** the default language instead.}%
\else
\language=\csname l@#1\endcsname
\fi
#2}}
\providecommand{\BIBdecl}{\relax}
\BIBdecl

\bibitem{barachant2011multiclass}
A.~Barachant, S.~Bonnet, M.~Congedo, and C.~Jutten, ``Multiclass
  brain--computer interface classification by riemannian geometry,'' \emph{IEEE
  Transactions on Biomedical Engineering}, vol.~59, no.~4, pp. 920--928, 2011.

\bibitem{kalunga2016online}
E.~K. Kalunga, S.~Chevallier, Q.~Barth{\'e}lemy, K.~Djouani, E.~Monacelli, and
  Y.~Hamam, ``Online {SSVEP}-based {BCI} using {R}iemannian geometry,''
  \emph{Neurocomputing}, vol. 191, pp. 55--68, 2016.

\bibitem{pascal2008performance}
F.~Pascal, P.~Forster, J.-P. Ovarlez, and P.~Larzabal, ``Performance analysis
  of covariance matrix estimates in impulsive noise,'' \emph{IEEE Transactions
  on signal processing}, vol.~56, no.~6, pp. 2206--2217, 2008.

\bibitem{portilla2003image}
J.~Portilla, V.~Strela, M.~J. Wainwright, and E.~P. Simoncelli, ``Image
  denoising using scale mixtures of gaussians in the wavelet domain,''
  \emph{IEEE Transactions on Image processing}, vol.~12, no.~11, pp.
  1338--1351, 2003.

\bibitem{mallik2003pseudo}
R.~K. Mallik, ``The pseudo-wishart distribution and its application to mimo
  systems,'' \emph{IEEE Transactions on Information Theory}, vol.~49, no.~10,
  pp. 2761--2769, 2003.

\bibitem{lopez2015eigenvalue}
F.~J. Lopez-Martinez, E.~Martos-Naya, J.~F. Paris, and A.~Goldsmith,
  ``Eigenvalue dynamics of a central {W}ishart matrix with application to
  {M}{I}{M}{O} systems,'' \emph{IEEE Transactions on Information Theory},
  vol.~61, no.~5, pp. 2693--2707, 2015.

\bibitem{ferreira2020advances}
J.~T. Ferreira, A.~Bekker, and M.~Arashi, ``Advances in {W}ishart-type
  modelling of channel capacity,'' \emph{REVSTAT-Statistical Journal}, vol.~18,
  no.~3, pp. 237--255, 2020.

\bibitem{mahot2013asymptotic}
M.~Mahot, F.~Pascal, P.~Forster, and J.-P. Ovarlez, ``Asymptotic properties of
  robust complex covariance matrix estimates,'' \emph{IEEE Transactions on
  Signal Processing}, vol.~61, no.~13, pp. 3348--3356, 2013.

\bibitem{prendes2015change}
J.~Prendes, M.~Chabert, F.~Pascal, A.~Giros, and J.-Y. Tourneret, ``Change
  detection for optical and radar images using a bayesian nonparametric model
  coupled with a markov random field,'' in \emph{2015 IEEE International
  Conference on Acoustics, Speech and Signal Processing (ICASSP)}.\hskip 1em
  plus 0.5em minus 0.4em\relax IEEE, 2015, pp. 1513--1517.

\bibitem{mian2024online}
A.~Mian, G.~Ginolhac, F.~Bouchard, and A.~Breloy, ``Online change detection in
  sar time-series with kronecker product structured scaled gaussian models,''
  \emph{Signal Processing}, vol. 224, p. 109589, 2024.

\bibitem{pham2001blind}
D.-T. Pham and J.-F. Cardoso, ``Blind separation of instantaneous mixtures of
  nonstationary sources,'' \emph{IEEE Transactions on signal processing},
  vol.~49, no.~9, pp. 1837--1848, 2001.

\bibitem{bouchard2018riemannian}
F.~Bouchard, J.~Malick, and M.~Congedo, ``Riemannian optimization and
  approximate joint diagonalization for blind source separation,'' \emph{IEEE
  Transactions on Signal Processing}, vol.~66, no.~8, pp. 2041--2054, 2018.

\bibitem{jolliffe2016principal}
I.~T. Jolliffe and J.~Cadima, ``Principal component analysis: a review and
  recent developments,'' \emph{Philosophical transactions of the royal society
  A: Mathematical, Physical and Engineering Sciences}, vol. 374, no. 2065, p.
  20150202, 2016.

\bibitem{collas2021probabilistic}
A.~Collas, F.~Bouchard, A.~Breloy, G.~Ginolhac, C.~Ren, and J.-P. Ovarlez,
  ``Probabilistic pca from heteroscedastic signals: geometric framework and
  application to clustering,'' \emph{IEEE Transactions on Signal Processing},
  vol.~69, pp. 6546--6560, 2021.

\bibitem{friedman2008sparse}
J.~Friedman, T.~Hastie, and R.~Tibshirani, ``Sparse inverse covariance
  estimation with the graphical lasso,'' \emph{Biostatistics}, vol.~9, no.~3,
  pp. 432--441, 2008.

\bibitem{hippert2023learning}
A.~Hippert-Ferrer, F.~Bouchard, A.~Mian, T.~Vayer, and A.~Breloy, ``Learning
  graphical factor models with riemannian optimization,'' in \emph{Joint
  European Conference on Machine Learning and Knowledge Discovery in
  Databases}.\hskip 1em plus 0.5em minus 0.4em\relax Springer, 2023, pp.
  349--366.

\bibitem{bidon2007bayesian}
S.~Bidon, O.~Besson, and J.-Y. Tourneret, ``A bayesian approach to adaptive
  detection in nonhomogeneous environments,'' \emph{IEEE Transactions on Signal
  Processing}, vol.~56, no.~1, pp. 205--217, 2007.

\bibitem{heaukulani2019scalable}
C.~Heaukulani and M.~van~der Wilk, ``Scalable bayesian dynamic covariance
  modeling with variational wishart and inverse wishart processes,''
  \emph{Advances in Neural Information Processing Systems}, vol.~32, 2019.

\bibitem{said2017riemannian}
S.~Said, L.~Bombrun, Y.~Berthoumieu, and J.~H. Manton, ``Riemannian gaussian
  distributions on the space of symmetric positive definite matrices,''
  \emph{IEEE Transactions on Information Theory}, vol.~63, no.~4, pp.
  2153--2170, 2017.

\bibitem{ayadi2023t}
I.~Ayadi, F.~Bouchard, and F.~Pascal, ``$t$-{WDA: A novel Discriminant Analysis
  applied to EEG classification},'' in \emph{2023 31st European Signal
  Processing Conference (EUSIPCO)}.\hskip 1em plus 0.5em minus 0.4em\relax
  IEEE, 2023, pp. 1793--1797.

\bibitem{teng1989generalized}
C.~Teng, H.~Fang, and W.~Deng, ``The generalized noncentral {W}ishart
  distribution,'' in \emph{J. of Mathematical Research and Exposition}, vol.~9,
  1989, pp. 479--488.

\bibitem{bilodeau1999theory}
M.~Bilodeau and D.~Brenner, \emph{Theory of multivariate statistics}.\hskip 1em
  plus 0.5em minus 0.4em\relax Springer Science \& Business Media, 1999.

\bibitem{gupta2018matrix}
A.~K. Gupta and D.~K. Nagar, \emph{Matrix variate distributions}.\hskip 1em
  plus 0.5em minus 0.4em\relax Chapman and Hall/CRC, 2018.

\bibitem{chiani2017probability}
M.~Chiani, ``On the probability that all eigenvalues of gaussian, wishart, and
  double wishart random matrices lie within an interval,'' \emph{IEEE
  Transactions on Information Theory}, vol.~63, no.~7, pp. 4521--4531, 2017.

\bibitem{anderson1946non}
T.~W. Anderson, ``The non-central {W}ishart distribution and certain problems
  of multivariate statistics,'' \emph{The Annals of Mathematical Statistics},
  pp. 409--431, 1946.

\bibitem{wong1995laplace}
C.~S. Wong and T.~Wang, ``Laplace-{W}ishart distributions and {C}ochran
  theorems,'' \emph{Sankhy{\=a}: The Indian Journal of Statistics, Series A},
  pp. 342--359, 1995.

\bibitem{roverato2002hyper}
A.~Roverato, ``Hyper inverse {W}ishart distribution for non-decomposable graphs
  and its application to {B}ayesian inference for {G}aussian graphical
  models,'' \emph{Scandinavian Journal of Statistics}, vol.~29, no.~3, pp.
  391--411, 2002.

\bibitem{diaz2006distribution}
J.~A. D{\'\i}az-Garc{\'\i}a and R.~Guti{\'e}rrez-J{\'a}imez, ``Distribution of
  the generalised inverse of a random matrix and its applications,''
  \emph{Journal of statistical planning and inference}, vol. 136, no.~1, pp.
  183--192, 2006.

\bibitem{sutradhar1989generalization}
B.~C. Sutradhar and M.~M. Ali, ``A generalization of the {W}ishart distribution
  for the elliptical model and its moments for the multivariate t model,''
  \emph{Journal of Multivariate Analysis}, vol.~29, no.~1, pp. 155--162, 1989.

\bibitem{anderson1958introduction}
T.~W. Anderson, \emph{An introduction to multivariate statistical
  analysis}.\hskip 1em plus 0.5em minus 0.4em\relax Wiley New York, 1958,
  vol.~2.

\bibitem{bekker2015wishart}
A.~Bekker, M.~Arashi, and J.~Van~Niekerk, ``{W}ishart generator distribution,''
  \emph{arXiv preprint arXiv:1502.07300}, 2015.

\bibitem{caro2016matrix}
F.~J. Caro-Lopera, G.~G. Far{\'\i}as, and N.~Balakrishnan, ``Matrix-variate
  distribution theory under elliptical models-4: Joint distribution of latent
  roots of covariance matrix and the largest and smallest latent roots,''
  \emph{Journal of Multivariate Analysis}, vol. 145, pp. 224--235, 2016.

\bibitem{ayadi2023elliptical}
I.~Ayadi, F.~Bouchard, and F.~Pascal, ``Elliptical {W}ishart distribution:
  maximum likelihood estimator from information geometry,'' \emph{International
  Conference on Acoustics, Speech and Signal Processing (ICASSP)}, 2023.

\bibitem{sultan1996moments}
S.~A. Sultan and D.~S. Tracy, ``Moments of {W}ishart distribution,''
  \emph{Stochastic analysis and applications}, vol.~14, no.~2, pp. 237s--243,
  1996.

\bibitem{kollo2005advanced}
T.~Kollo and D.~von Rosen, \emph{Advanced multivariate statistics with
  matrices}.\hskip 1em plus 0.5em minus 0.4em\relax Springer Dordrecht, 2005.

\bibitem{von1988moments}
D.~Von~Rosen, ``Moments for the inverted {W}ishart distribution,''
  \emph{Scandinavian Journal of Statistics}, pp. 97--109, 1988.

\bibitem{neudecker2003two}
H.~Neudecker, ``On two matrix derivatives by kollo and von rosen,''
  \emph{SORT-Statistics and Operations Research Transactions}, pp. 153--163,
  2003.

\bibitem{neudecker1983some}
H.~Neudecker and T.~Wansbeek, ``Some results on commutation matrices, with
  statistical applications,'' \emph{Canadian Journal of Statistics}, vol.~11,
  no.~3, pp. 221--231, 1983.

\bibitem{ollila2012complex}
E.~Ollila, D.~E. Tyler, V.~Koivunen, and H.~V. Poor, ``Complex elliptically
  symmetric distributions: Survey, new results and applications,'' \emph{IEEE
  Transactions on signal processing}, vol.~60, no.~11, pp. 5597--5625, 2012.

\bibitem{gupta2012elliptically}
A.~K. Gupta and T.~Varga, \emph{Elliptically contoured models in
  statistics}.\hskip 1em plus 0.5em minus 0.4em\relax Springer Science \&
  Business Media, 2012, vol. 240.

\bibitem{maiwald2000calculation}
D.~Maiwald and D.~Kraus, ``Calculation of moments of complex {W}ishart and
  complex inverse {W}ishart distributed matrices,'' \emph{IEE
  Proceedings-Radar, Sonar and Navigation}, vol. 147, no.~4, pp. 162--168,
  2000.

\bibitem{numata2010formulas}
Y.~Numata and S.~Kuriki, ``On formulas for moments of the {W}ishart
  distributions as weighted generating functions of matchings,'' \emph{Discrete
  Mathematics \& Theoretical Computer Science}, no. Proceedings, 2010.

\bibitem{lu2001macmahon}
I.-L. Lu and D.~S.~P. Richards, ``Macmahon's master theorem, representation
  theory, and moments of {W}ishart distributions,'' \emph{Advances in Applied
  Mathematics}, vol.~27, no. 2-3, pp. 531--547, 2001.

\bibitem{letac2004all}
G.~Letac and H.~Massam, ``All invariant moments of the {W}ishart
  distribution,'' \emph{Scandinavian Journal of Statistics}, vol.~31, no.~2,
  pp. 295--318, 2004.

\bibitem{bishop2018introduction}
A.~N. Bishop, P.~Del~Moral, A.~Niclas \emph{et~al.}, ``An introduction to
  {W}ishart matrix moments,'' \emph{Foundations and Trends{\textregistered} in
  Machine Learning}, vol.~11, no.~2, pp. 97--218, 2018.

\bibitem{percincula2022invariant}
C.~A. Perc{\'\i}ncula, L.~Forzani, and R.~Toledano, ``Invariant moments of the
  {W}ishart distribution: A sage package and website implementation,''
  \emph{journal of statistical software}, 3 2022.

\bibitem{graczyk2007moments}
P.~Graczyk and L.~Vostrikova, ``The moments of {W}ishart processes via it{\^o}
  calculus,'' \emph{Theory of Probability \& Its Applications}, vol.~51, no.~4,
  pp. 609--625, 2007.

\bibitem{hillier2021moments}
G.~Hillier and R.~Kan, ``Moments of a {W}ishart matrix,'' \emph{Journal of
  Quantitative Economics}, vol.~19, pp. 141--162, 2021.

\bibitem{nagar2023expected}
D.~K. Nagar, A.~Rold{\'a}n-Correa, and S.~Nadarajah, ``Expected values of
  scalar-valued functions of a complex {W}ishart matrix,'' \emph{Mathematics},
  vol.~11, no.~9, p. 2162, 2023.

\bibitem{muirhead2009aspects}
R.~J. Muirhead, \emph{Aspects of multivariate statistical theory}.\hskip 1em
  plus 0.5em minus 0.4em\relax John Wiley \& Sons, 2009.

\bibitem{khatri1966certain}
C.~Khatri, ``On certain distribution problems based on positive definite
  quadratic functions in normal vectors,'' \emph{The Annals of Mathematical
  Statistics}, pp. 468--479, 1966.

\bibitem{von1997moments}
D.~Von~Rosen, ``On moments of the inverted {W}ishart distribution,''
  \emph{Statistics: A Journal of Theoretical and Applied Statistics}, vol.~30,
  no.~3, pp. 259--278, 1997.

\bibitem{hillier2022properties}
G.~Hillier and R.~Kan, ``Properties of the inverse of a noncentral {W}ishart
  matrix,'' \emph{Econometric Theory}, vol.~38, no.~6, pp. 1092--1116, 2022.

\bibitem{kan2022expectations}
R.~Kan and G.~Hillier, ``On the expectations of equivariant matrix-valued
  functions of {W}ishart and inverse {W}ishart matrices,'' \emph{Available at
  SSRN 4096101}, 2022.

\bibitem{di2011wishart}
J.~A. D{\i}, R.~Guti{\'e}rrez-J{\'a}imez \emph{et~al.}, ``On {W}ishart
  distribution: some extensions,'' \emph{Linear Algebra and its Applications},
  vol. 435, no.~6, pp. 1296--1310, 2011.

\bibitem{fang1990generalized}
K.-T. Fang and Y.-T. Zhang, \emph{Generalized multivariate analysis}.\hskip 1em
  plus 0.5em minus 0.4em\relax Science Press and Springer-Verlag, 1990.

\bibitem{cambanis1981theory}
S.~Cambanis, S.~Huang, and G.~Simons, ``On the theory of elliptically contoured
  distributions,'' \emph{Journal of Multivariate Analysis}, vol.~11, no.~3, pp.
  368--385, 1981.

\bibitem{taylor2017generalization}
A.~Taylor, P.~Forster, F.~Daout, H.~M. Oriot, and L.~Savy, ``A generalization
  of the fixed point estimate for packet-scaled complex covariance matrix
  estimation,'' \emph{IEEE Transactions on Signal Processing}, vol.~65, no.~20,
  pp. 5393--5405, 2017.

\bibitem{simon2002probability}
M.~K. Simon, \emph{Probability distributions involving {G}aussian random
  variables: A handbook for engineers and scientists}.\hskip 1em plus 0.5em
  minus 0.4em\relax Springer, 2002.

\bibitem{caro2008noncentral}
F.~J. Caro~Lopera, ``Noncentral elliptical configuration density,'' Ph.D.
  dissertation, Center for Mathematical Research. CIMAT, A.C., 2008.

\bibitem{gross1987special}
K.~I. Gross and D.~S.~P. Richards, ``Special functions of matrix argument. i.
  algebraic induction, zonal polynomials, and hypergeometric functions,''
  \emph{Transactions of the American Mathematical Society}, vol. 301, no.~2,
  pp. 781--811, 1987.

\bibitem{kotz1975multivariate}
S.~Kotz, ``Multivariate distributions at a cross road,'' in \emph{A Modern
  Course on Statistical Distributions in Scientific Work: Volume 1—Models and
  Structures Proceedings of the NATO Advanced Study Institute held at the
  University of Calgagry, Calgary, Alberta, Canada July 29--August 10,
  1974}.\hskip 1em plus 0.5em minus 0.4em\relax Springer, 1975, pp. 247--270.

\bibitem{patnaik1949non}
P.~Patnaik, ``The non-central $\chi$ 2-and {F}-distribution and their
  applications,'' \emph{Biometrika}, vol.~36, no. 1/2, pp. 202--232, 1949.

\bibitem{lindsey1999multivariate}
J.~Lindsey, ``Multivariate elliptically contoured distributions for repeated
  measurements,'' \emph{Biometrics}, vol.~55, no.~4, pp. 1277--1280, 1999.

\bibitem{thom1958note}
H.~C. Thom, ``A note on the {G}amma distribution,'' \emph{Monthly weather
  review}, vol.~86, no.~4, pp. 117--122, 1958.

\bibitem{stacy1962generalization}
E.~W. Stacy, ``A generalization of the {G}amma distribution,'' \emph{The Annals
  of mathematical statistics}, pp. 1187--1192, 1962.

\bibitem{bartlett1934xx}
M.~S. Bartlett, ``Xx.—on the theory of statistical regression,''
  \emph{Proceedings of the Royal Society of Edinburgh}, vol.~53, pp. 260--283,
  1934.

\bibitem{Aristimunha_Mother_of_all_2023}
\BIBentryALTinterwordspacing
B.~Aristimunha, I.~Carrara, P.~Guetschel, S.~Sedlar, P.~Rodrigues, J.~Sosulski,
  D.~Narayanan, E.~Bjareholt, Q.~Barthelemy, R.~T. Schirrmeister, E.~Kalunga,
  L.~Darmet, G.~Cattan, A.~Abdul~Hussain, R.~Gatti, V.~Goncharenko, J.~Thielen,
  T.~Moreau, Y.~Roy, V.~Jayaram, A.~Barachant, and S.~Chevallier, ``{Mother of
  all BCI Benchmarks},'' 2023. [Online]. Available:
  \url{https://github.com/NeuroTechX/moabb}
\BIBentrySTDinterwordspacing

\end{thebibliography}

\appendix[Proof of Proposition \ref{prop:kronW}]
\label{app:kronWproof}

The Kronecker moment of $\MAT S\sim \mathcal{W}(n,\MAT\Sigma)$ of arbitrary order $k$ exits thanks to the dominated convergence theorem. In fact, for $1\leq i,j\leq p^k$, 
$|(\otimes^k \MAT S)_{ij}| \leq  \|\MAT S\|_{2}^k \leq \mathcal{Q}^k \|\MAT \Sigma\|_{2}^k$ and $E[\mathcal{Q}^k] =2^k\frac{\Gamma(\frac{np}{2}+k)}{\Gamma(\frac{np}{2})}<\infty$.

Let $f_{\MAT S}$ denote the probability density function of the Wishart distribution $\mathcal{W}(n,\MAT\Sigma)$. 
Using the fact that $\frac{\partial f_{\MAT S}(\MAT S)}{\partial \MAT\Sigma^{-1}} = \left(\frac{n}{2} \MAT\Sigma -\frac{1}{2}\MAT S\right)f_{\MAT S}(\MAT S)$, we get:
\begin{multline}
        \frac{\partial E[\otimes^k\MAT S] }{\partial \MAT\Sigma^{-1}} = \int_{\mathcal{S}_p^{++}} \frac{\partial }{\partial \MAT \Sigma^{-1}}\left((\otimes^k \MAT S) \otimes f_{\MAT S}(\MAT S)\right)d\MAT S=\int_{\mathcal{S}_p^{++}} (\otimes^k \MAT S) \otimes \frac{\partial f_{\MAT S}(\MAT S)}{\partial \MAT \Sigma^{-1}}d\MAT S\\
        =\frac{n}{2}\int_{\mathcal{S}_p^{++}} (\otimes^k \MAT S) \otimes \MAT\Sigma \ f_{\MAT S}(\MAT S) d\MAT S-\frac{1}{2}\int_{\mathcal{S}_p^{++}} (\otimes^{k+1} \MAT S) f_{\MAT S}(\MAT S) d\MAT S\\
        = \frac{n}{2} E(\otimes^k \MAT S)\otimes \MAT\Sigma - \frac{1}{2}E[\otimes^{k+1}\MAT S]
    \label{eq:recursive wishart}
    \end{multline}
    where $d\MAT S = \prod_{i\leq j}d\MAT S_{ij}$ and the first equality results from the differentiation under the integral. This leads to the recursion in Equation \eqref{eq:kronW_rec}.
Since $\otimes^{k+1}\MAT S=\MAT S \otimes \left(\otimes^k \MAT S\right)$, then, by applying the formula of the derivative of a Kronecker product:
\begin{equation}
    \frac{\partial \otimes^{k+1}\MAT S}{\partial \MAT\Sigma^{-1}} = \MAT S \otimes \frac{\partial \otimes^k \MAT S}{\partial \MAT\Sigma^{-1}} + \left(\MAT K_{p,p^k}\otimes \MAT I_p\right)\left(\otimes^k\MAT S \otimes \frac{\partial \MAT S}{\partial \MAT\Sigma^{-1}}\right)\left(\MAT K_{p^k,p}\otimes \MAT I_p\right)
    \label{eq:rec}
\end{equation}
Accordingly, with a simple recursion, it can be shown that:
\begin{equation*}
    \frac{\partial \otimes^k \MAT S}{\partial\MAT \Sigma^{-1}} = \sum_{l=0}^{k-1}\left(\MAT I_{p^l}\otimes \MAT K_{p,p^{k-1-l}}\otimes \MAT I_{p}\right)\left(\otimes^{k-1}\MAT S \otimes \frac{\partial \MAT S}{\partial \MAT \Sigma^{-1}}\right)\left(\MAT I_{p^l}\otimes \MAT K_{p^{k-1-l},p}\otimes \MAT I_{p}\right),
    \label{eqs:rec bis}
\end{equation*}
and thus:
\begin{equation}
    \vvec\left(\frac{\partial \otimes^k \MAT S}{\partial\MAT \Sigma^{-1}}\right) = \left[\sum_{l=0}^{k-1}(\MAT H_{(k,l)}\otimes \MAT H_{(k,l)})\right](\MAT I_{p^{k-1}}\otimes \MAT K_{p^2,p^{k-1}}\otimes \MAT I_{p^2})\left\{\vvec\left(\otimes^{k-1}\MAT S\right) \otimes \vvec\left( \frac{\partial \MAT S}{\partial \MAT\Sigma^{-1}}\right)\right\}.
    \label{eq:eqvec}
\end{equation}

Now, let us compute $\vvec\left(\frac{\partial \MAT S}{\partial \MAT\Sigma^{-1}}\right)$. For this, we write $\MAT S\stackrel{d}{=}\MAT\Sigma^{1/2}\MAT R\MAT\Sigma^{1/2}$ where $\MAT R$ does not depend on $\MAT\Sigma$. Therefore,
\begin{equation}
    \frac{\partial \MAT S}{\partial \MAT \Sigma^{-1}} = \frac{\partial \MAT\Sigma^{1/2}}{\partial\MAT \Sigma^{-1}}\left(\MAT I_p\otimes\MAT R\MAT\Sigma^{1/2}\right)+\left(\MAT I_p\otimes\MAT \Sigma^{1/2}\right) \frac{\partial \MAT R\MAT\Sigma^{1/2}}{\partial \MAT\Sigma^{-1}}, \text{ and,}
    \label{eq:dev1}
\end{equation}
\begin{equation}
    \frac{\partial \MAT R\MAT\Sigma^{1/2}}{\partial \MAT\Sigma^{-1}} = \left(\MAT I_p\otimes\MAT R\right)\frac{\partial \MAT\Sigma^{1/2}}{\partial \MAT\Sigma^{-1}}.
    \label{eq:dev2bis}
\end{equation}
So,
\begin{equation}
\begin{split}
    \vvec\left(\frac{\partial \MAT S}{\partial\MAT \Sigma^{-1}} \right)&= \vvec\left(\frac{\partial \MAT\Sigma^{1/2}}{\partial \MAT\Sigma^{-1}}\left(\MAT I_p\otimes \MAT R\MAT\Sigma^{1/2}\right)+\left(\MAT I_p\otimes \MAT\Sigma^{1/2}\right) \left(\MAT I_p\otimes\MAT  R\right)\frac{\partial \MAT\Sigma^{1/2}}{\partial \MAT\Sigma^{-1}}\right)\\
    &= \vvec\left(\frac{\partial \MAT\Sigma^{1/2}}{\partial\MAT \Sigma^{-1}}\left(\MAT I_p\otimes\MAT R\MAT\Sigma^{1/2}\right)+\left(\MAT I_p\otimes\MAT \Sigma^{1/2}\MAT R\right)\frac{\partial \MAT\Sigma^{1/2}}{\partial \MAT \Sigma^{-1}}\right)\\
    &=(\MAT I_{p^2}\otimes \MAT I_p\otimes\MAT \Sigma^{1/2}\MAT R+\MAT I_p\otimes \MAT\Sigma^{1/2}\MAT R\otimes \MAT I_{p^2}) \vvec\left(\frac{\partial \MAT\Sigma^{1/2}}{\partial \MAT\Sigma^{-1}}\right).
    \end{split}
    \label{eq:7}
\end{equation}
Let us compute $\vvec\left(\frac{\partial \MAT\Sigma^{1/2}}{\partial \MAT\Sigma^{-1}} \right)$:
\begin{equation*}
    \begin{array}{ll}
         & (i)\quad \MAT\Sigma^{1/2} \cdot \MAT\Sigma^{1/2}\cdot \MAT\Sigma^{-1}=\MAT I_p, \\
         &(ii)\quad  d\MAT\Sigma^{1/2} \cdot \MAT\Sigma^{-1/2} + \MAT\Sigma^{1/2} \cdot d\MAT\Sigma^{1/2} \cdot \MAT\Sigma^{-1}+\MAT\Sigma\cdot d\MAT\Sigma^{-1} = 0,\\
         &(iii) \quad \MAT\Sigma^{-1}\cdot d\MAT\Sigma^{1/2}\cdot\MAT\Sigma^{-1/2} + \MAT\Sigma^{-1/2} \cdot d\MAT\Sigma^{1/2}\cdot\MAT \Sigma^{-1}=-d\MAT\Sigma^{-1},\\
         &(iv) \quad \left(\MAT\Sigma^{-1/2}\otimes\MAT\Sigma^{-1}+\MAT\Sigma^{-1}\otimes \MAT\Sigma^{-1/2}\right)\vvec(d\MAT\Sigma^{1/2})=-\vvec(d\MAT\Sigma^{-1}),\\
         &(iv) \quad \left(\MAT\Sigma^{-1/2}\otimes\MAT\Sigma^{-1}+\MAT\Sigma^{-1}\otimes \MAT\Sigma^{-1/2}\right)\quad \vvec(d\MAT\Sigma^{1/2})=-\vvec(d\MAT\Sigma^{-1}),\\
         &(v) \quad d\vvec(\MAT\Sigma^{1/2})=-\left[\MAT\Sigma^{-1/2}\otimes\MAT\Sigma^{-1}+\MAT\Sigma^{-1}\otimes \MAT\Sigma^{-1/2}\right]^{-1} d\vvec(\MAT\Sigma^{-1}),
    \end{array}
\end{equation*}
where $(ii)$ is obtained by differentiating $(i)$; $(iii)$ results from the multiplication of $(ii)$ by $\MAT\Sigma^{-1}$; vectorizing $(iii)$ leads to $(iv)$ using $\vvec(\MAT A\MAT B\MAT C)=(\MAT C^\top\otimes \MAT A)\vvec(\MAT B)$ and finally, since $\vvec$ is linear, we have $\vvec(d\cdot)=d\vvec(\cdot)$, which leads to $(v)$. Therefore, 
\begin{align*}
    \frac{\partial \vvec(\MAT\Sigma^{1/2})}{\partial \vvec(\MAT\Sigma^{-1})^\top} &= -\frac{1}{4}(\MAT I_{p^2}+\MAT K_{pp})\left(\MAT\Sigma^{-1/2}\otimes\MAT\Sigma^{-1}+\MAT\Sigma^{-1}\otimes \MAT\Sigma^{-1/2}\right)^{-1}(\MAT I_{p^2}+\MAT K_{pp})\\
    &=-\frac{1}{2}(\MAT I_{p^2}+ \MAT K_{pp})(\MAT\Sigma^{-1}\oplus \MAT\Sigma^{-1/2})^{-1},
    \label{eq:dev3}
\end{align*}
with the notation $\MAT A \oplus \MAT B = \MAT A \otimes \MAT B + \MAT B \otimes \MAT A$.
Finally, using the fact that: 
\begin{equation*}
    \vvec\left(\frac{\partial \MAT Y}{\partial \MAT X}\right)= (\MAT K_{pp}\otimes \MAT K_{pp})(\MAT I_p\otimes\MAT K_{pp}\otimes \MAT I_p) \vvec\left(\frac{\partial \vvec(\MAT Y)}{\partial \vvec(\MAT X)^\top}\right),
\end{equation*}
we get:
\begin{equation}
\begin{split}
    \vvec\left(\frac{\partial \MAT\Sigma^{1/2}}{\partial \MAT\Sigma^{-1}}\right) &= -(\MAT K_{pp}\otimes \MAT K_{pp})(\MAT I_p\otimes \MAT K_{pp}\otimes \MAT I_p) \vvec\left(\frac{1}{2}(\MAT I_{p^2}+\MAT K_{pp})(\MAT\Sigma^{-1}\oplus \MAT\Sigma^{-1/2})^{-1}\right).
    \end{split}
    \label{eq:10}
\end{equation}
By injecting Equation \eqref{eq:10} into Equation \eqref{eq:eqvec} and using some matrix manipulations, we have:
\begin{multline}
    \vvec\left(\frac{\partial \MAT S}{\partial\MAT \Sigma^{-1}}\right) = -\frac{1}{2}\left[(\MAT K_{pp}\otimes\MAT K_{pp})(\MAT I_p\otimes\MAT K_{pp}\otimes\MAT I_p) +\MAT I_{p^4}\right]\\
    \left[\MAT K_{pp}\otimes \left((\MAT I_{p^2}+\MAT K_{pp})(\MAT I_{p^2}+\MAT\Sigma^{1/2}\otimes\MAT \Sigma^{-1/2})^{-1}\MAT K_{pp}\right)\right]\vvec\left(\MAT S\otimes \MAT\Sigma\right)
\end{multline}
Thus,
\begin{equation}
   \vvec(\otimes^{k-1}\MAT S)\otimes \vvec\left(\frac{\partial\MAT S}{\partial\MAT \Sigma^{-1}}\right) = -\frac{1}{2}\left(\MAT I_{p^{2k-2}} \otimes \MAT G_{\MAT\Sigma}\right)
  \left\{ \vvec\left(\otimes^{k-1}\MAT S\right)\otimes \vvec\left(\MAT S\otimes \MAT\Sigma\right)\right\}.
   \label{eq:60}
\end{equation}
where $$\MAT G_{\MAT\Sigma} = \left[(\MAT K_{pp}\otimes\MAT K_{pp})(\MAT I_p\otimes\MAT K_{pp}\otimes\MAT I_p) +\MAT I_{p^4}\right]\left[\MAT K_{pp}\otimes \left((\MAT I_{p^2}+\MAT K_{pp})(\MAT I_{p^2}+\MAT\Sigma^{1/2}\otimes\MAT\Sigma^{-1/2})^{-1}\MAT K_{pp}\right)\right],$$ which belongs to $\mathbb{R}^{p^4\times p^4}$.
Moreover,
\begin{equation}
\begin{split}
    \vvec\left(\otimes^{k-1}\MAT S\right)\otimes \vvec\left(\MAT S\otimes \MAT \Sigma\right)&=(\MAT I_{p^{k-1}} \otimes\MAT  K_{p^{k-1},p^2}\otimes\MAT  I_{p^2}) \vvec((\otimes^k \MAT S) \otimes \MAT \Sigma)\\
    &=(\MAT I_{p^{k-1}} \otimes \MAT K_{p^{k-1},p^2}\otimes \MAT I_{p^2}) (\MAT I_{p^{k}} \otimes\MAT  K_{p,p^{k}}\otimes \MAT I_p)\left\{\vvec(\otimes^k \MAT S) \otimes \vvec(\MAT \Sigma)\right\}
    \label{eq:vec}
    \end{split}
\end{equation}

Combining Equation \eqref{eq:eqvec}, Equation \eqref{eq:vec} and Equation \eqref{eq:60}:
\begin{equation}
    \vvec\left(\frac{\partial E[\otimes^k \MAT S]}{\partial\MAT \Sigma^{-1}}\right) = -\frac{1}{2} \MAT J_{(k),\MAT\Sigma} \  \{\vvec(E[\otimes^k \MAT S])\otimes \vvec(\MAT\Sigma)\}
    \label{eq:eqvecexpec}
\end{equation}
where $\MAT J_{(k),\MAT\Sigma} = \left[\sum_{l=0}^{k-1}(\MAT H_{(k,l)}\otimes \MAT H_{(k,l)})\right](\MAT I_{p^{k-1}}\otimes \MAT K_{p^2,p^{k-1}}\otimes \MAT I_{p^2})(\MAT I_{p^{2k-2}}\otimes\MAT G_{\MAT\Sigma})(\MAT I_{p^{k-1}} \otimes \MAT K_{p^{k-1},p^2}\otimes \MAT I_{p^2}) (\MAT I_{p^{k}} \otimes \MAT K_{p,p^{k}}\otimes \MAT I_p)$.
By injecting Equation \eqref{eq:eqvecexpec} in the recursive equation \eqref{eq:kronW_rec},
\begin{equation*}
   \vvec(E[\otimes^{k+1}\MAT S]) =  \left[n (\MAT I_{p^k}\otimes \MAT K_{p,p^k} \otimes \MAT I_p ) +\MAT J_{(k),\MAT\Sigma} \right] \MAT K_{p^{2k},p^2}\left\{\vvec(\MAT\Sigma)\otimes \vvec(E[\otimes^k\MAT S]) \right\},
   \label{eq:kronW_rec1}
\end{equation*}
Let $\MAT M_{(k),\MAT\Sigma} =  \left[n (\MAT I_{p^k}\otimes \MAT K_{p,p^k} \otimes \MAT I_p ) +\MAT J_{(k),\MAT\Sigma} \right] \MAT K_{p^{2k},p^2}$. Hence, using the fact that $E[\MAT S]=n\MAT\Sigma$,
\begin{equation}
   \vvec(E[\otimes^{k}\MAT S]) =  \prod_{l=0}^{k-1}\left[\MAT I_{p^{2l}}\otimes\MAT M_{(k-1-l),\MAT\Sigma}\right] \left\{\otimes^k \vvec(\MAT\Sigma)\right\}.
   \label{eq:kronW_rec_}
\end{equation}

Notice that the resulting formula derives explicitly and non-recursively the $k^\textup{th}$ Kronecker moment for $\MAT S\sim \mathcal{W}(n,\MAT\Sigma)$. However, the expression of $\MAT J_{(k),\MAT \Sigma}$ depends on $\MAT\Sigma$, which is not an elegant way to compute arbitrary orders. To overcome this issue, we use the invariance of the Wishart distribution. In fact, $\MAT R = \MAT \Sigma^{-1/2}\MAT S  \MAT \Sigma^{-1/2}\sim \mathcal{W}(n,\MAT I_p)$ and it can be shown with recursion that $\otimes^k (\MAT\Sigma^{1/2}\MAT R\MAT\Sigma^{1/2}) = (\otimes^k \MAT\Sigma^{1/2})(\otimes^k \MAT R)(\otimes^k \MAT\Sigma^{1/2})$. Thus, it suffices to derive the Kronecker moments for $\MAT\Sigma=\MAT I_p$. Using Equation \eqref{eq:kronW_rec_},
\begin{equation*}
   \vvec(E[\otimes^{k}\MAT R]) = \prod_{l=0}^{k-1}\left[\MAT I_{p^{2l}}\otimes\MAT M_{(k-1-l)}\right] \left\{\otimes^k \vvec(\MAT I_p)
 \right\},  
\end{equation*}
where $\MAT M_{(k)} = \MAT M_{(k),\MAT I_p}$ and $\MAT G_{\MAT I_p} = \frac{1}{2}\left[(\MAT K_{pp}\otimes \MAT K_{pp})(\MAT I_p\otimes \MAT K_{pp}\otimes \MAT I_p) +\MAT I_{p^4}\right]\left[\MAT K_{pp}\otimes (\MAT I_{p^2}+\MAT K_{pp})\right]$. Therefore, 
\begin{equation}
   \vvec(E[\otimes^{k}\MAT S]) = \left(\otimes^{2k}\MAT \Sigma^{1/2}\right) \prod_{l=0}^{k-1}\left[\MAT I_{p^{2l}}\otimes\MAT M_{(k-1-l)}\right] \left\{\otimes^k \vvec(\MAT I_p)\right\}.
   \label{eq:kronW_rec_1
   fin}
\end{equation}


\end{document}